\documentclass[12pt]{article}
\usepackage{amsmath}
\usepackage{amssymb}
\usepackage{latexsym}
\usepackage{amsthm}
\usepackage{mathrsfs}
\usepackage{enumitem}
\usepackage[colorlinks,
            linkcolor=red,
            anchorcolor=blue,
            citecolor=green]{hyperref}

\parskip=5pt
\setlength{\topmargin}{0.25cm} \setlength{\oddsidemargin}{0.25cm}
 \setlength{\textwidth}{16cm}
\setlength{\textheight}{22.1cm}

\newtheorem{thm}{Theorem}[section]
\newtheorem{lem}[thm]{Lemma}
\newtheorem{prop}[thm]{Proposition}

\newtheorem{conj}[thm]{Conjecture}
\newtheorem{rem}[thm]{Remark}

\newcommand{\e}{\mathbf{e}}
\newcommand{\s}{\mathbf{s}}

\DeclareMathOperator{\asc}{asc}
\DeclareMathOperator{\Asc}{Asc}

\DeclareMathOperator{\des}{des}
\DeclareMathOperator{\Des}{Des}

\DeclareMathOperator{\nege}{neg}
\DeclareMathOperator{\exc}{exc}


\newcommand{\I}{\mathfrak{I}} 
\newcommand{\seps}{\prec}
\newcommand{\sep}{\preceq}


\begin{document}
\begin{center}
{\large \bf  Mutual Interlacing and Eulerian-like Polynomials\\
 for  Weyl Groups}
\end{center}
\begin{center}
Arthur L.B. Yang$^{1}$ and Philip B. Zhang$^{2}$\\[6pt]

$^{1, 2}$Center for Combinatorics, LPMC-TJKLC\\
Nankai University, Tianjin 300071, P. R. China\\[6pt]


Email: $^{1}${\tt yang@nankai.edu.cn},
             $^{2}${\tt zhangbiaonk@163.com}
\end{center}

\noindent\textbf{Abstract.}
We use the method of mutual interlacing to prove two conjectures on the real-rootedness of Eulerian-like polynomials: Brenti's conjecture on $q$-Eulerian polynomials for Weyl groups of type $D$, and Dilks, Petersen, and Stembridge's conjecture on affine Eulerian polynomials for irreducible finite Weyl groups.

For the former, we obtain a refinement of Brenti's $q$-Eulerian polynomials of type $D$, and then show that these refined Eulerian polynomials satisfy certain recurrence relation. By using the Routh--Hurwitz theory and the recurrence relation, we prove that these polynomials form a mutually interlacing sequence for any positive $q$, and hence prove Brenti's conjecture. For $q=1$, our result reduces to the real-rootedness of the Eulerian polynomials of type $D$, which were originally conjectured by Brenti and recently proved by Savage and Visontai.

For the latter, we introduce a family of polynomials based on Savage and Visontai's refinement of Eulerian polynomials of type $D$. We show that these new polynomials satisfy the same recurrence relation as Savage and Visontai's refined Eulerian polynomials.
As a result, we get the real-rootedness of the affine Eulerian polynomials of type $D$. Combining the previous results for other types, we completely  prove Dilks, Petersen, and Stembridge's conjecture, which states that, for every irreducible finite Weyl group, the affine descent polynomial has only real zeros.

\noindent \emph{AMS Classification 2010:} Primary 05A15, 26C10; Secondary 20F55, 05E45, 93D05.

\noindent \emph{Keywords:}  Mutual interlacing, Eulerian-like polynomials, Weyl groups, descent,  affine descent, Routh--Hurwitz stability criterion.

\section{Introduction}

Let $\mathfrak{S}_n$ denote the set of permutations of $[n]=\{1,2,\ldots,n\}$.
For $\sigma=(\sigma_1,\sigma_2, \ldots, \sigma_n) \in \mathfrak{S}_n$, let
$$\Des \sigma = \{i \in [n-1] : \sigma_i > \sigma_{i+1}\}$$
denote the set of descents of $\sigma$, and let $\des \sigma = |\Des \sigma|.$ The Eulerian polynomials $S_n(x)$ are usually defined as the descent generating function over $\mathfrak{S}_n$, namely,
\begin{align}\label{eq-eulerionpol}
S_n(x) = \sum_{\sigma\in\mathfrak{S}_n}x^{\des(\sigma)}.
\end{align}
These polynomials are not only of interest in combinatorics, but also
of significance in geometry. For example, the coefficients of Eulerian polynomials can be interpreted as the $h$-vector of the Coxeter complex of type $A$, or as the even Betti numbers of certain toric varieties, see \cite{Dolgachev1994character, Stanley1980number, Stembridge1992Eulerian}.

There are many interesting generalizations of Eulerian polynomials, see \cite{Brenti1994$q$, Dilks2009Affine, Savage$s$,  Visontai2013Stable} and references therein. In this paper, we focus on two families of Eulerian-like polynomials, which are Brenti's $q$-analogue of Eulerian polynomials for finite Coxeter groups \cite{Brenti1994$q$} and Dilks, Petersen, and Stembridge's affine Eulerian polynomials for irreducible finite Weyl groups \cite{Dilks2009Affine}.
It is well known that the classical Eulerian polynomials have only real zeros.
By the Newton inequality, if a polynomial
$$f(x)=a_0+a_1x+\cdots+a_nx^n$$
has only nonpositive zeros, then its coefficients must be log-concave, namely, $$a_i^2\geq a_{i+1}a_{i-1}, \mbox{ for } 1\leq i\leq n-1.$$
Therefore, the polynomial $f(x)$ is also unimodal, namely, there exists some $k$ such that
$$a_0\leq \cdots \leq a_{k-1}\leq a_k \geq a_{k+1}\geq \cdots \geq a_n.$$
Thus, the Eulerian polynomials $S_n(x)$ are log-concave and unimodal.
Many sequences and polynomials appearing in combinatorics, algebra and geometry turn out to be log-concave or unimodal, see  \cite{Brenti1989Unimodal, Brenti1994Log, Stanley1989Log}.
A natural problem is to study whether the Eulerian-like polynomials are unimodal, log-concave, or even real-rooted. Brenti \cite{Brenti1994$q$} obtained the real-rootedness of the $q$-Eulerian polynomials of type $B$ for any positive $q$, and conjectured that it is also true for type $D$.
Dilks, Petersen, and Stembridge \cite{Dilks2009Affine} showed that the affine Eulerian polynomials for irreducible finite Weyl groups have symmetric and unimodal coefficients, and conjectured these polynomials are real-rooted.
The main objective of this paper is to prove Brenti's conjecture on the real-rootedness of $q$-Eulerian polynomials and Dilks, Petersen, and Stembridge's conjecture on the real-rootedness of affine Eulerian polynomials.

Let us first give an overview of Brenti's conjecture and Dilks, Petersen, and Stembridge's conjecture. We assume that the reader is familiar with Coxeter groups and root systems, see \cite{Bjorner2005Combinatorics, Humphreys1990Reflection}.
Let $W$ be a finite Coxeter group generated by $s_{1},s_{2},\ldots,s_{n}$.
The length of each $\sigma\in W$ is defined as the number of generators in one of its reduced expressions, denoted $\ell(\sigma)$. We say that $i$ is a descent of $\sigma$ if $\ell(\sigma s_{i})<\ell(\sigma)$. Let $\Des\sigma$ denote the descent set of $\sigma$, and let $\des \sigma=|\Des\sigma|$ denote the descent number.
The descent polynomial for a finite Coxeter group $W$ is
defined by
\begin{align*}
W(x)\ =\ \sum_{\sigma\in W}x^{\des\sigma}\,.
\end{align*}
In a geometric context, this polynomial is also the $h$-polynomial of the Coxeter complex of $W$, for more information see \cite{Stembridge1994Some, Stembridge2008Coxeter}.

Brenti \cite{Brenti1994$q$} first studied the problem of whether $W(x)$ has only real zeros for any general finite Coxeter group. By a simple argument, he showed that it is enough to check the real-rootedness of $W(x)$ for irreducible finite Coxeter groups. If $W$ is a Coxeter group of type $A_{n}$ (or $B_n, D_n, \ldots$), by abuse of notation, we shall write the corresponding descent polynomials as $A_n(x)$  (resp. $B_n(x), D_n(x), \ldots$) instead of $W(x)$.
For the exceptional groups, one can directly verify the truth by using a computer. The real-rootedness of $A_{n}(x)$ is also obvious, since it is the Eulerian polynomial $S_{n+1}(x)$ as defined in \eqref{eq-eulerionpol}.

To prove the real-rootedness of $B_n(x)$, Brenti \cite{Brenti1994$q$} gave a combinatorial interpretation of descents in the following manner.
First, regard the Coxeter group $B_{n}$ as the set of signed permutations of the set $[n]$, i.e., each element $\sigma\in B_n$ is a permutation of $\{-n,\ldots,-1,1,\ldots,n\}$ satisfying $\sigma({-i})=-\sigma(i)$ for $1\le i \le n$. Then write $\sigma$ in one-line notation $(\sigma_{1},\sigma_{2},\dots,\sigma_{n})$, where $\sigma_i=\sigma(i)$.
Let $\nege \sigma$ be the negative numbers in $(\sigma_{1},\sigma_{2},\dots,\sigma_{n})$, and let $\des_{B}\sigma =|\Des_B\sigma|$, where
$$\Des_{B}\sigma  = \,\{0 : \mathrm{if}\;\sigma_{1}<0\}\cup\{i\in [n-1] : \sigma_{i}>\sigma_{i+1}\}.$$
Brenti introduced the following $q$-analogue of $B_n(x)$:
\begin{align}
B_n(x;q)= \sum_{\sigma\in{B_n}}q^{\nege\sigma}x^{{\des}_{B}\sigma},
\end{align}
which reduces to $B_n(x)$ when $q=1$. Brenti proved that, for any $q\geq 0$, the polynomial $B_n(x;q)$ has only real zeros, and thus established the real-rootedness of $B_n(x)$.

The real-rootedness conjecture of $D_n(x)$ has resisted all efforts until recently. Analogous to the case of type $B_n$, Brenti gave a combinatorial interpretation of $D_n(x)$ as certain generating function over the set of even signed permutations of $[n]$. Given an even signed permutation $\sigma$ with one-line notation $(\sigma_{1},\sigma_{2},\dots,\sigma_{n})$, let $\nege_D\sigma$ be the negative numbers in $(\sigma_{2},\dots,\sigma_{n})$, and let
$\des_D\sigma=|\Des_D\sigma|$, where
$$\Des_{D}\sigma  =  \,\{0 : \mathrm{if}\;\sigma_{1}+\sigma_{2}<0\}\cup\{i\in [n-1] : \sigma_{i}>\sigma_{i+1}\}.$$
Brenti introduced the following $q$-analogue of $D_n(x)$,
\begin{align}\label{def:q-D}
D_n(x;q)= \sum_{\sigma\in{{D}_n}}q^{\nege_D\sigma}x^{{\des}_{D}\sigma},
\end{align}
which reduces to $A_{n-1}(x)$ when $q=0$.
Brenti made the following conjecture.

\begin{conj}[{\cite{Brenti1994$q$}}]\label{conj:q-type-D}
For any positive $q$, the polynomial $D_n(x;q)$ has only real zeros.
\end{conj}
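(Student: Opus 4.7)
The plan is to establish Conjecture~\ref{conj:q-type-D} via the method of \emph{mutual interlacing}. Recall that a tuple $(f_1,\dots,f_m)$ of real-rooted polynomials with positive leading coefficients is mutually interlacing when, for each $i<j$, the zeros of $f_i$ interlace those of $f_j$ in a prescribed way; a standard lemma then implies that every nonnegative combination $\sum_k c_k f_k$ is real-rooted. Thus, if I can decompose $D_n(x;q)$ as a sum of polynomials that form a mutually interlacing tuple for all $q>0$, the conjecture follows at once.

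The first step is to introduce a refinement $D_{n,k}(x;q)$ of $D_n(x;q)$ indexed by a finite set of parameters $k$ tracking the ``last letter'' of an even signed permutation $\sigma\in D_n$ (or, equivalently, the last coordinate of a Savage--Visontai-type $\mathbf{s}$-inversion sequence). Summing over $k$ must recover $D_n(x;q)$. With the right choice of indexing---likely a combination of $|\sigma_n|$ and a sign, and perhaps carrying additional information about $\sigma_1$ because of the peculiar type-$D$ descent condition $\sigma_1+\sigma_2<0$---the operation of inserting or deleting the largest letter should induce a linear recurrence of the form
\begin{equation*}
D_{n,k}(x;q)\;=\;\sum_{j}\, a_{k,j}(x;q)\, D_{n-1,j}(x;q),
\end{equation*}
whose coefficients $a_{k,j}(x;q)$ are polynomials in $x$ and $q$ with nonnegative coefficients.

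Given such a recurrence, mutual interlacing is propagated inductively. The tool here is the Routh--Hurwitz stability theory: writing the recurrence as the action of a transfer matrix on the vector $\bigl(D_{n-1,k}(x;q)\bigr)_k$, mutual interlacing of the output is guaranteed provided the matrix satisfies a positivity-plus-stability condition that reduces to checking a finite number of Hurwitz-type determinantal inequalities built from the $a_{k,j}(x;q)$. This turns the inductive step into a finite algebraic verification, while the base case (small $n$) is handled directly. Summing over $k$ finally delivers the real-rootedness of $D_n(x;q)$ for every $q>0$.

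The two main obstacles I anticipate are these. First, discovering the correct refinement: type $D$ is subtler than type $B$ because the index-$0$ descent depends on both $\sigma_1$ and $\sigma_2$, so the naive refinement by $\sigma_n$ alone is unlikely to close combinatorially, and the indexing set will probably have to record some boundary information at the front of the permutation as well. Second, verifying the Routh--Hurwitz inequalities uniformly in $q>0$: because $q$ enters the transfer-matrix entries nontrivially, the required determinantal positivity must be established as a polynomial identity in $q$, not merely at a single value, and I expect the bulk of the technical work to live here. Once these two steps are in place, the passage from the mutual interlacing of the tuple $\bigl(D_{n,k}(x;q)\bigr)_k$ to the real-rootedness of $D_n(x;q)$ is immediate.
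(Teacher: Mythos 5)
Your overall strategy---refine, find a recurrence, propagate mutual interlacing inductively, and sum---is indeed the paper's strategy, but as written the plan has two concrete gaps. First, the refinement you leave open is precisely the nontrivial discovery, and your guess (index by the last letter of an even signed permutation, plus some boundary data about $\sigma_1$) is not what makes the argument close. The paper first invokes Brenti's identity $(1+q)D_n(x;q)=\sum_{\sigma\in B_n}q^{\nege\sigma}x^{\des_D\sigma}$, i.e.\ it replaces $D_n(x;q)$ by $T_n(x;q)$, a sum over \emph{all} signed permutations with the type-$D$ statistics extended; only after transferring to inversion sequences via the Savage--Visontai bijection does the naive refinement by the last coordinate $e_n$ satisfy a clean recurrence, namely $T_{n,i}(x;q)=q^{\chi(i\ge n)}\bigl(x\sum_{j<\lceil\frac{n-1}{n}i\rceil}T_{n-1,j}(x;q)+\sum_{j\ge\lceil\frac{n-1}{n}i\rceil}T_{n-1,j}(x;q)\bigr)$. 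Dropping the evenness constraint (at the cost of the harmless factor $1+q$) is the missing idea; refining $D_n(x;q)$ itself by the last letter does not produce a recurrence of this form, as you yourself suspect.

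Second, your inductive mechanism is misplaced. You propose to verify ``Hurwitz-type determinantal inequalities built from the $a_{k,j}(x;q)$'' at each inductive step, but there is no such criterion for a transfer matrix with arbitrary nonnegative polynomial entries in $x$ and $q$; the known matrix criteria (Fisk's result, Theorem 2.8 in the paper) require entries that are nonnegative constants or positive multiples of $x$ with specific $2\times 2$ sign conditions. The point of the recurrence above is that, after rescaling by $q^{-\chi(i\ge n)}$ (mutual interlacing is invariant under positive scalar multiples), its coefficient matrix has entries only $1$ and $x$ in the staircase pattern of \eqref{comp-pre-tran}, so Savage--Visontai's transformation theorem (Theorem 2.4) propagates mutual interlacing with no computation at all. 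The Routh--Hurwitz machinery enters only once, in the base case $n=4$: since a numerical check is impossible with the parameter $q$, one uses the Hermite--Biehler theorem to convert each pairwise interlacing $T_{4,i}\preceq T_{4,j}$ into weak Hurwitz stability of $zT_{4,i}(z^2;q)+T_{4,j}(z^2;q)$ and then verifies positivity of the Hurwitz determinants uniformly in $q>0$ (most are visibly nonnegative-coefficient polynomials in $q$, but a few, e.g.\ one determinant for the pair $(0,6)$ and the degenerate $q=1$ cases, need separate care). So the ``uniformity in $q$'' difficulty you anticipate is real, but it lives in the base case, not in the transfer-matrix step, and your plan as stated would stall at the inductive step for lack of a valid interlacing-preservation criterion.
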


Note that, when $q=1$, the polynomial $D_n(x;q)$ reduces  to $D_n(x)$.
For this case, Savage and Visontai \cite{Savage$s$} gave a novel proof of the above conjecture by using the theory of $\mathbf{s}$-inversion sequences.
Recall that, given a sequence $\s=(s_{1}, s_2,\ldots)$ of positive integers, an $n$-dimensional $\s$-inversion sequence is a sequence $\e=(e_{1},\dots,e_{n})\in\mathbb{N}^{n}$ such that $e_i<s_i$ for each $1\le i\le n$. Denote the set of $n$-dimensional $\s$-inversion sequences by
$\I_{n}^{(\s)}$. For $\s=(2,4,6,\dotsc)$, Savage and Visontai introduced a statistic $\asc_{D}$ on inversion sequences  $\e=(e_{1},\dots,e_{n})\in\I_{n}^{(\s)}$, which counts the number of type $D$ ascents given by
\begin{align}\label{def:asc-D}
\Asc_{D}\e=\{ i\in[n-1] : \frac{e_{i}}{i}<\frac{e_{i+1}}{i+1}\} \cup\{0 : {\rm if}\ e_{1}+e_{2}/2\ge3/2\}.
\end{align}
In this way, the polynomial $D_n(x)$ can be interpreted as the generating function of the statistic $\asc_{D}$ over $\I_{n}^{(2,4,6,\dotsc)}$, precisely,
\begin{align*}
2D_{n}(x)=\sum_{e\in\I_{n}^{(2,4,6,\dotsc)}}x^{\asc_{D}e}.
\end{align*}
Let $T_n(x)=2D_{n}(x)$. Clearly, $T_n(x)$ is real-rooted if and only if $D_n(x)$ is real-rooted. To prove the real-rootedness of $T_n(x)$, Savage and Visontai introduced the following refinement of $T_n(x)$:
\begin{align}\label{refinement typed}
T_{n,i}(x)\ =\ \sum_{\e\in\I_{n}^{(2,4,6,\dotsc)}}\chi(e_{n}=i)\, x^{\asc_{D}\e}\,,
\end{align}
where $\chi(\varphi)$ is $1$ if the statement $\varphi$ is true and $0$ otherwise. Note that
$$T_n(x)=\sum_{i=0}^{2n-1}T_{n,i}(x).$$ They showed that, for any $n\geq3$ and $0\le i\le 2n-1$, these refined polynomials satisfy the following simple recurrence relation:
\begin{align}\label{eq:T_n,i}
T_{n,i}(x)=x\sum_{j=0}^{\left\lceil \tfrac{n-1}{n}i\right\rceil -1}T_{n-1,j}(x)+\sum_{j=\left\lceil \tfrac{n-1}{n}i\right\rceil }^{2n-3}T_{n-1,j}(x),
\end{align}
where $\lceil t\rceil$ represents the smallest integer larger than or equal to $t$. By using the theory of compatible polynomials developed by
Chudnovsky and Seymour \cite{Chudnovsky2007roots}, Savage and Visontai inductively proved that the polynomials satisfying such recurrence relations are compatible, and hereby obtained the real-rootedness of $T_n(x)$. The basis of their induction is $n=4$, which can be verified by numerical analysis with the aid of a computer. As a result, Savage and Visontai proved the following result, a long-standing conjecture of Brenti.

\begin{thm}[{\cite[Theorem 3.15]{Savage$s$}}]
For any finite Coxeter group $W$, the descent polynomial $W(x)$ has only real zeros.
\end{thm}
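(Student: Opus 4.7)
The plan is to follow the standard reduction for finite Coxeter groups, dispose of the classical and exceptional types by existing results or direct computation, and attack the only genuinely open case---type $D$---via a refinement together with a compatibility argument. Since $W(x)$ factors multiplicatively under direct products $W=W_{1}\times W_{2}$, it suffices to treat irreducible Coxeter groups. Among these, $A_{n}(x)$ is the classical Eulerian polynomial and is real-rooted by a classical argument; $B_{n}(x)=B_{n}(x;1)$ is covered by Brenti's theorem on the real-rootedness of $B_{n}(x;q)$ for $q\ge 0$; and the exceptional and dihedral groups $I_{2}(m)$, $H_{3}$, $H_{4}$, $F_{4}$, $E_{6}$, $E_{7}$, $E_{8}$ can be checked directly by computer. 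This leaves only type $D_{n}$, and for this I would work with $T_{n}(x)=2D_{n}(x)$ together with the refinement $T_{n,i}(x)$ defined in \eqref{refinement typed}.

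The main tool is the Chudnovsky--Seymour theory of compatible polynomials. Recall that a finite sequence $f_{0},\dots,f_{N}$ of polynomials with nonnegative coefficients is \emph{compatible} if every nonnegative linear combination $\sum_{i}c_{i}f_{i}$ has only real zeros; in particular, the single polynomial $\sum_{i}f_{i}$ is then real-rooted. My goal is to prove by induction on $n$ that the sequence $\{T_{n,i}(x)\}_{i=0}^{2n-1}$ is compatible. Granted this, $T_{n}(x)=\sum_{i}T_{n,i}(x)$ is real-rooted, hence so is $D_{n}(x)=T_{n}(x)/2$, completing the proof. The induction would be anchored at $n=4$, which is verified by a finite numerical check.

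The inductive step is driven by the recurrence \eqref{eq:T_n,i}: each $T_{n,i}(x)$ is written as $x$ times a prefix sum of the polynomials $T_{n-1,j}(x)$ plus the complementary suffix sum, with cut-off $\lceil (n-1)i/n\rceil$. What is needed is a closure property of the form ``such prefix-times-$x$-plus-suffix combinations of a compatible family form a new compatible family, provided the cut-offs are nondecreasing in the index.'' Establishing this closure property within the Chudnovsky--Seymour framework is where I expect the main obstacle to lie: one must propagate the common interlacer governing $\{T_{n-1,j}\}$ to a common interlacer for $\{T_{n,i}\}$, and the nondecreasing behavior of $\lceil (n-1)i/n\rceil$ is precisely what keeps the interlacings consistent across adjacent cut-offs. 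Once this closure lemma is in hand, the induction proceeds routinely and the real-rootedness of $W(x)$ for every finite Coxeter group follows.
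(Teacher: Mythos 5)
Your proposal is correct and follows essentially the same route as the paper, which quotes this theorem from Savage and Visontai and recounts exactly this argument: reduction to irreducible groups, known results for types $A$ and $B$ and direct verification for the exceptional groups, and for type $D$ the refinement $T_{n,i}(x)$ of \eqref{refinement typed}, the recurrence \eqref{eq:T_n,i}, and an induction anchored at the numerically verified case $n=4$. The closure lemma you anticipate is precisely Theorem \ref{thm:compatibility} (equivalently Theorem \ref{thm:interlacing} for nonnegative coefficients); the only refinement needed in your write-up is that the inductive hypothesis must be the stronger pairwise condition that both $f_i(x),f_j(x)$ and $xf_i(x),f_j(x)$ are compatible (equivalently, mutual interlacing), not bare compatibility of the family---which is exactly the ``common interlacer'' consistency you flag, together with the nondecreasing cut-offs $\left\lceil \tfrac{n-1}{n}i\right\rceil$.
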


Dilks, Petersen, and Stembridge \cite{Dilks2009Affine} proposed a
companion conjecture to Brenti's conjecture, which is concerned with
the real-rootedness of certain affine descent polynomials for  irreducible finite Weyl groups. Suppose that $W$ is an irreducible finite Weyl group generated by $\{s_1,s_2,\ldots,s_n\}$. Let $s_0$ be the reflection corresponding to the highest root.
For each $\sigma\in W$, we say that $i$ is an affine descent of $\sigma$ if either $i\in \Des\,\sigma$ for $1\le i\le n$, or $i=0$ and $\ell(\sigma s_{0})>\ell(\sigma)$.
Let $\widetilde{\Des}\,\sigma$ denote the set of affine descents of $\sigma$, and let $\widetilde{\des}\,\sigma=|\widetilde{\Des}\,\sigma|$. It is worth mentioning that the affine descents were first introduced by Cellini \cite{Cellini1995general} for finite Weyl groups, for further developments see \cite{Cellini1995generala, Cellini1998Cyclic, Fulman2000Affine, LamAlcoved, Petersen2005Cyclic}.
 Analogous to the definition of $W(x)$, the affine descent polynomial of $W$ is defined as
\begin{align*}
\widetilde{W}(x)\ =\ \sum_{\sigma\in W}x^{\widetilde{\des}\,\sigma},
\end{align*}
which is called the affine Eulerian polynomial by Dilks, Petersen, and Stembridge.
They obtained many interesting properties of $\widetilde{W}(x)$, such as a connection with the $h$-polynomial of the reduced Steinberg torus.
They also showed that the affine Eulerian polynomials have unimodal coefficients.
Furthermore, Dilks, Petersen, and Stembridge proposed the following conjecture.

\begin{conj}[{\cite[Conjecture 4.1]{Dilks2009Affine}}]
For any irreducible finite Weyl group $W$, the affine Eulerian polynomial $\widetilde{W}(x)$ has only real zeros.
\label{conj:affine-type-D}
\end{conj}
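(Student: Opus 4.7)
The plan is to reduce the conjecture to the type $D$ case and handle that case by introducing a refinement of $\widetilde{D}_n(x)$ tailored to fit the recurrence machinery \eqref{eq:T_n,i} already used by Savage and Visontai.

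First, I would dispose of the non-$D$ cases. For the exceptional irreducible Weyl groups $E_6$, $E_7$, $E_8$, $F_4$, $G_2$, the polynomial $\widetilde{W}(x)$ has small degree and can be computed explicitly, and one checks real-rootedness numerically. For types $A_{n-1}$, $B_n$, and $C_n$, I would appeal to the explicit formulas for $\widetilde{W}(x)$ given by Dilks, Petersen, and Stembridge in \cite{Dilks2009Affine}, which express these affine Eulerian polynomials as scalar multiples or simple linear combinations of the Eulerian polynomials $A_{n-1}(x)$ and $B_n(x)$; real-rootedness of the latter is classical (via Brenti's $B_n(x;q)$ at $q=1$) and so carries over. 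This reduces the conjecture to showing that $\widetilde{D}_n(x)$ is real-rooted for all $n$.

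For type $D$, set $\widetilde{T}_n(x) := 2\widetilde{D}_n(x)$. The idea is to translate the affine descent statistic $\widetilde{\des}$ on $D_n$ through Savage and Visontai's bijection between $D_n$ and the inversion sequences $\I_n^{(2,4,6,\dotsc)}$, producing a statistic $\widetilde{\asc}_D$ on $\I_n^{(2,4,6,\dotsc)}$ such that
\[
\widetilde{T}_n(x) \;=\; \sum_{\e\in\I_n^{(2,4,6,\dotsc)}} x^{\widetilde{\asc}_D\,\e}.
\]
The definition of $\widetilde{\asc}_D$ differs from $\asc_D$ in \eqref{def:asc-D} only in the boundary condition at position $0$, since the affine reflection $s_0$ in type $D$ acts at the top end of a signed permutation and contributes a single extra Boolean clause to $\Asc_D$. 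Then I would define the refinement
\[
\widetilde{T}_{n,i}(x)\;=\;\sum_{\e\in\I_n^{(2,4,6,\dotsc)}}\chi(e_n=i)\,x^{\widetilde{\asc}_D\,\e},
\]
so that $\widetilde{T}_n(x)=\sum_{i=0}^{2n-1}\widetilde{T}_{n,i}(x)$.

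The key step is to show that $\widetilde{T}_{n,i}(x)$ satisfies exactly the Savage--Visontai recurrence \eqref{eq:T_n,i}, with the same ceiling threshold $\lceil\tfrac{n-1}{n}i\rceil$. This is plausible because the recurrence is driven by the transition $e_{n-1}\mapsto e_n$, i.e.\ by comparing $e_{n-1}/(n-1)$ with $e_n/n$, a local condition at the position $n-1\in[n-1]$; the affine clause at position $0$ is a global condition on $(e_1,e_2)$ and thus is preserved intact when passing from $\widetilde{T}_{n-1,j}(x)$ to $\widetilde{T}_{n,i}(x)$ for $n$ large enough. Once the recurrence is verified, I invoke the mutual-interlacing framework developed earlier in the paper: any family of nonnegative-coefficient polynomials obeying \eqref{eq:T_n,i} with a mutually interlacing base case is itself mutually interlacing, and hence its sum is real-rooted. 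Thus $\widetilde{T}_n(x)$, and therefore $\widetilde{D}_n(x)$, has only real zeros, completing the conjecture.

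The main obstacle will be twofold. The combinatorial step of identifying the correct statistic $\widetilde{\asc}_D$ on $\I_n^{(2,4,6,\dotsc)}$ so that its generating function is exactly $2\widetilde{D}_n(x)$ requires carefully tracking the reflection through the highest root under the Savage--Visontai bijection, and the resulting boundary condition at position $0$ may be more intricate than the analogous condition for $\asc_D$. Second, even after the recurrence is in place, the induction requires a mutually interlacing base case for $\{\widetilde{T}_{n_0,i}(x)\}_i$ at some small $n_0$, which must be checked by direct computation (as in \cite{Savage$s$} for $T_n$) before the inductive real-rootedness argument can proceed.
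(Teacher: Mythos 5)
Your reduction to type $D$ is fine (types $A$, $C$ and the exceptional groups were already known, and type $B$ follows from Savage--Visontai's identity $\widetilde{B}_n(x)=T_{n+1,n+1}(x)$ rather than from a formula in terms of $B_n(x)$, but this is a cosmetic point). The genuine gap is in your key step for type $D$: the claim that the naive refinement $\widetilde{T}_{n,i}(x)=\sum_{\e}\chi(e_n=i)\,x^{\widetilde{\asc}_D\e}$ satisfies the Savage--Visontai recurrence \eqref{eq:T_n,i}. It does not, and your justification rests on a misplacement of the affine clause. The type $D$ affine descent is the condition $\sigma_{n-1}+\sigma_n>0$, which under the Savage--Visontai bijection becomes a condition on the \emph{last two} entries $e_{n-1},e_n$ of the inversion sequence (see \eqref{def:affine-asc-D}), not a ``global condition on $(e_1,e_2)$'' at position $0$. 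Since the recursion from $n-1$ to $n$ proceeds precisely by appending a new last entry, the affine clause changes which coordinates it constrains at every step, so $\widetilde{T}_{n,i}(x)$ cannot be expressed in terms of the $\widetilde{T}_{n-1,j}(x)$ by a relation of the form \eqref{eq:T_n,i}; the inductive engine you intend to run never starts.

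What actually works (and is what the paper does) is to abandon the recursion among the $\widetilde{T}_{n,i}$ themselves and instead expand each $\widetilde{T}_{n,i}(x)$ in terms of the \emph{ordinary} refined polynomials $T_{n-1,j}(x)$, with weights $x^2$, $x$, $1$ depending on two threshold conditions (Lemma \ref{lem:S-recurrence}). Summing gives the decomposition \eqref{recurrence relation Weyl D}, a weighted sum $\sum_i\bigl((n-i-1)x+i+1\bigr)\bigl(xT_{n-1,i}(x)+T_{n-1,n+i-1}(x)\bigr)$ with \emph{linear polynomial} coefficients. Real-rootedness of such a sum is not a consequence of mutual interlacing alone: one needs the auxiliary family $K_{n,i}(x)$ of \eqref{eq:f}, the fact (proved by a matrix commutation argument) that it satisfies the same recurrence \eqref{eq:T_n,i} as $T_{n,i}$, a computed mutually interlacing base case at $n=4$, and finally the Haglund--Ono--Wagner lemma (Theorem \ref{Haglund1999}) to compare the two nonnegative weightings $(n-i)$ and $i$. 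Your proposal anticipates none of this extra structure, so even granting a correct combinatorial translation of $\widetilde{\asc}_D$, the argument as outlined would not close.
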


Dilks, Petersen, and Stembridge \cite{Dilks2009Affine} remarked that the affine Eulerian polynomials $\widetilde{A}_n(x)$ and $\widetilde{C}_n(x)$ are both  multiples of the classical Eulerian polynomial $S_{n}(x)$ and hence the above conjecture is true for the groups of type $A$ and $C$, see also \cite{Fulman2000Affine, Petersen2005Cyclic}.
For the exceptional groups, the conjecture can be directly verified.
Dilks, Petersen, and Stembridge \cite{Dilks2009Affine} left the type $B$ and type $D$ cases open.
Savage and Visontai's novel approach to Brenti's conjecture also enables them to settle the above conjecture for the groups of type $B$. To be precise, Savage and Visontai \cite{Savage$s$} showed that the affine Eulerian polynomials $\widetilde{B}_{n}(x)$ is equal to $T_{n+1,n+1}(x)$ as defined by \eqref{refinement typed}, and derived the real-rootedness of $\widetilde{B}_{n}(x)$ from that of $T_{n+1,i}(x)$. This relation is obtained based on the following combinatorial interpretation of type $B$ affine descents:
\begin{align*}
\widetilde{\Des}_{B}\,\sigma = & \, \Des_{B}\sigma \cup\{n : \mathrm{if}\;\sigma_{n-1}+\sigma_{n}>0\}.
\end{align*}
There exists a similar combinatorial interpretation of type $D$ affine descents as follows:
\begin{align*}
\widetilde{\Des}_{D}\,\sigma = & \, \Des_{D}\sigma \cup\{n : \mathrm{if}\;\sigma_{n-1}+\sigma_{n}>0\}.
\end{align*}
However, the real-rootedness conjecture of $\widetilde{D}_{n}(x)$ still remains open.

In this paper, we completely solve Conjectures \ref{conj:q-type-D} and \ref{conj:affine-type-D}. Motivated by Savage and Visontai's  proof of the real-rootedness of ${D}_{n}(x)$, we are aimed at finding two families of polynomials, which not only constitute $D_n(x;q)$ and $\widetilde{D}_{n}(x)$ respectively, but also satisfy similar recurrence relations to those of $T_{n,i}(x)$. Based on these recurrences, we shall show that the interlacing property holds for these polynomials, from which we can derive the real-rootedness of $D_n(x;q)$ and $\widetilde{D}_{n}(x)$.

For the $q$-Eulerian polynomial $D_n(x;q)$, we find a refinement $T_{n,i}(x;q)$ as given in Section \ref{section-q-Eulerian}. But, due to the additional parameter $q$, we can not check the compatibility of $T_{n,i}(x;q)$ for small $n$ via a direct numerical analysis as done by Savage and Visontai for proving the compatibility of $T_{n,i}(x)$. To overcome this difficulty, we turn to check the interlacing property of $T_{n,i}(x;q)$ since all these polynomials have nonnegative coefficients. By the Hermite--Biehler theorem, the problem is further transformed to testing the stability of some polynomials associated with $T_{n,i}(x;q)$, which can be done by using the Routh--Hurwitz stability criterion.

For the affine Eulerian polynomial $\widetilde{D}_{n}(x)$, it is also hoped that
there exists a refinement satisfying a recurrence relation like \eqref{eq:T_n,i}.
Based on the combinatorial interpretation of type $D$ affine descents, Savage and Visontai \cite{Savage$s$} gave an expression of the affine Eulerian polynomial $\widetilde{D}_{n}(x)$ in terms of the ascent statistic over inversion sequences.
With this expression, it is natural to define a refinement of $\widetilde{D}_{n}(x)$ as done for ${D}_{n}(x)$.
Unfortunately, for such refined polynomials there does not exist a recurrence relation as that of $T_{n,i}(x)$. We construct a family of polynomials based on these refined polynomials, and show that these polynomials satisfy the same recurrence relation as $T_{n,i}(x)$. Then we prove the interlacing property of these polynomials, from which we derive the real-rootedness of $\widetilde{D}_{n}(x)$.

This paper is organized as follow.
In Section \ref{section-interlacing}, we give an overview of Savage and Visontai's general theorem on transformations preserving real-rootedness, which is the key tool for our proofs of Brenti's conjecture and Dilks, Petersen, and Stembridge's conjecture.
In Section \ref{section-q-Eulerian}, we give a refinement of $\widetilde{D}_n(x;q)$ and present a proof of Conjecture \ref{conj:q-type-D}.
In Section \ref{section-affine-Eulerian}, we define a new family of polynomials concerning $\widetilde{D}_{n}(x)$ and prove the real-rootedness of $\widetilde{D}_n(x)$, and hence confirm Conjecture \ref{conj:affine-type-D}. At the end of this paper, we give a new proof of an equality on $\widetilde{B}_{n}(x),\,\widetilde{D}_{n}(x)$
and $D_{n-1}(x)$ due to Dilks, Petersen, and Stembridge \cite{Dilks2009Affine}.

\section{Transformations preserving real-rootedness}\label{section-interlacing}

The aim of this section is to give an overview of
Savage and Visontai's theorem on transformations preserving real-rootedness.
This theorem not only plays an important role in their study of the real-rootedness of various Eulerian polynomials, but also is critical for
our proofs of Conjectures \ref{conj:q-type-D} and \ref{conj:affine-type-D}.
Due to its significance, we also present another proof of this result when
all the polynomials involved have only nonnegative coefficients.
Finally, we note a unified theorem of Fisk, of which the above two theorems can be treated as corollaries.

Savage and Visontai's theorem was originally stated in languages of compatible polynomials. Let us first introduce some related concepts. Suppose that
$f_{1}(x), \dots, f_{m}(x)$ are polynomials with real coefficients.
These polynomials are said to be compatible if, for any nonnegative numbers $c_{1},\dots,c_{m}$, the polynomial
$$c_1f_1(x)+c_2f_2(x)+\cdots+c_{m}f_{m}(x)$$
has only real zeros, and they are said to be pairwise compatible if, for all $1\le i<j\le m$, the polynomials $f_{i}(x)$ and $f_{j}(x)$ are compatible.
These concepts are defined by Chudnovsky and Seymour \cite{Chudnovsky2007roots} in their study of the real-rootedness of independence polynomials of claw-free graphs. The following remarkable lemma shows that how the two concepts are related.

\begin{lem} [{\cite[2.2]{Chudnovsky2007roots}}] \label{lem:pairwise compatible}
The polynomials $f_{1}(x),\dots,f_{m}(x)$ with positive leading coefficients are pairwise compatible if and  only if they are compatible.
\end{lem}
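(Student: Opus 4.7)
\emph{Plan.} The ``$\Leftarrow$'' direction (compatibility implies pairwise compatibility) is trivial: setting $c_k=0$ for every $k\notin\{i,j\}$ in the definition of compatibility shows that each pair $f_i,f_j$ is compatible. For the nontrivial ``$\Rightarrow$'' direction, I plan to induct on $m$, taking as the central analytical tool the classical Hermite--Kakeya--Obreschkoff (HKO) theorem. Recall that HKO asserts, for real-rooted polynomials $p,q$ with positive leading coefficients, that $c_1 p + c_2 q$ has only real zeros for every $c_1,c_2\geq 0$ if and only if $p$ and $q$ interlace with a common orientation, i.e., either $p\sep q$ or $q\sep p$.

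The base case $m=2$ is a tautology. For the inductive step, suppose the result for every family of fewer than $m$ polynomials, and let $f_1,\dots,f_m$ be pairwise compatible with positive leading coefficients. Given nonnegative $c_1,\dots,c_m$, I write $F=c_1 f_1 + G$ with $G=\sum_{i=2}^{m}c_i f_i$. The induction hypothesis, applied to the pairwise compatible subfamily $f_2,\dots,f_m$, tells us that $G$ has only real zeros, so it remains to verify that $f_1$ and $G$ are compatible. By HKO, pairwise compatibility gives that each pair $(f_1,f_i)$ interlaces; the crux is to show that the interlacings $f_1\sep f_i$ (or their reverses) can be chosen with a single common orientation across $i=2,\dots,m$. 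Once that is achieved, the standard stability of the relation $f_1\sep\,\cdot\,$ under nonnegative combinations on the right--verified by tracking signs of $f_i$ at consecutive zeros of $f_1$ and invoking the intermediate value theorem--yields $f_1\sep G$, and a final application of HKO shows that $c_1 f_1+G$ has only real zeros.

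The main obstacle is the orientation-tracking step. If all $f_i$ share the same degree, orientation can be read off from the sign pattern of $f_i$ at the zeros of $f_1$, and pairwise compatibility of $f_i,f_j$ combined with the alternation forced by HKO rules out orientation flips. When the degrees differ, one needs a more careful reduction: either pad smaller-degree polynomials by a common interlacing factor to bring everything to a uniform degree, or use a continuity argument on the convex cone spanned by the $f_i$ to propagate the orientation from a single base pair. This combinatorial bookkeeping--deciding which side of the interlacing each $f_i$ lies on, consistently for all $i$--is the genuine content of the lemma, while the analytic ingredients reduce entirely to HKO.
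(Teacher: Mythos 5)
Your reduction to pairwise interlacing is where the argument breaks. The Hermite--Kakeya--Obreschkoff equivalence you invoke holds for \emph{arbitrary real} linear combinations; for nonnegative combinations, which is all that compatibility gives you, it fails, and a compatible pair need not interlace. Concretely, $p(x)=(x+1)^2$ and $q(x)=(x+2)(x+\tfrac12)$ have positive leading coefficients and are compatible: the discriminant of $p+cq$ equals $c\left(2+\tfrac94 c\right)\ge 0$ for $c\ge 0$, so every $c_1p+c_2q$ with $c_1,c_2\ge0$ is real-rooted; yet $p$ and $q$ do not interlace, because the double zero $-1$ of $p$ lies strictly between the zeros $-2$ and $-\tfrac12$ of $q$. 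This is precisely why Wagner's result (Theorem 2.4 of the paper) needs the extra hypothesis that $xf$ and $g$ be compatible before concluding $f\sep g$. So the opening move of your induction, ``each pair $(f_1,f_i)$ interlaces,'' is not available from pairwise compatibility alone.

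Even if all pairs did interlace, the common-orientation step --- which you yourself flag as the genuine content and leave unproved --- is false for an arbitrarily chosen pivot $f_1$. Take $f_2=(x+4)(x+2)$, $f_1=(x+3)(x+1)$, $f_3=\left(x+\tfrac52\right)\left(x+\tfrac12\right)$: the sequence $(f_2,f_1,f_3)$ is mutually interlacing, so the family is pairwise compatible, but $f_3\sep f_1$ fails and $f_1\sep f_2$ fails, hence neither ``$f_i\sep f_1$ for all $i$'' nor ``$f_1\sep f_i$ for all $i$'' holds; your decomposition $F=c_1f_1+G$ with the first polynomial as base cannot be closed by orientation bookkeeping, and rescuing it would require proving that some extreme member (or an auxiliary polynomial) interlaces all the others --- which is the heart of the matter, not a bookkeeping detail. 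For comparison, the paper gives no proof of this lemma: it cites Chudnovsky and Seymour, whose actual argument characterizes compatibility of a pair by the existence of a \emph{common interleaver} and then shows that a pairwise compatible family with positive leading coefficients admits a common interleaver, from which real-rootedness of every nonnegative combination follows at once. Your sketch would need that machinery (or an equivalent substitute), not HKO plus orientation tracking.
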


Savage and Visontai's theorem is concerned with the following transformation:
Given a sequence of polynomials $(f_{1}(x),\dots,f_{m}(x))$ with real coefficients, define another sequence of polynomials $(g_{1}(x),\dots,g_{m'}(x))$ by the equations
\begin{align} \label{comp-pre-tran}
g_{k}(x)=\sum_{\ell=1}^{t_{k}-1}xf_{\ell}(x)+\sum_{\ell=t_{k}}^{m}f_{\ell}(x),\quad\mathrm{for}\;1\le k\le m',
\end{align}
where $1\le t_{1}\le\dotso\le t_{m'}\le m+1$.
Savage and Visontai obtained the following useful result.

\begin{thm}[{\cite[Theorem 2.3]{Savage$s$}}]
\label{thm:compatibility}
Given a sequence of real polynomials $f_{1}(x),\dots,f_{m}(x)$
with positive leading coefficients, let $g_{1}(x),\dots,g_{m'}(x)$ be  defined as in \eqref{comp-pre-tran}. If, for all $1\le i<j\le m$,
\begin{itemize}
\item[(1)] $f_i(x)$ and $f_j(x)$ are compatible, and
\item[(2)] $x f_i(x)$ and $f_j(x)$ are compatible,
\end{itemize}
then, for all $1\le i<j\le m'$,
\begin{itemize}
\item[(1')] $g_i(x)$ and $g_j(x)$ are compatible, and
\item[(2')] $x g_i(x)$ and $g_j(x)$ are compatible.
\end{itemize}
\end{thm}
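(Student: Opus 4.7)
The plan is to apply Lemma~\ref{lem:pairwise compatible} to reduce both conclusions to pairwise compatibility. Fixing $1 \le i < j \le m'$ (so $t_i \le t_j$), I would show that $a g_i(x) + b g_j(x)$ and $a x g_i(x) + b g_j(x)$ have only real zeros for every pair of nonnegative reals $a, b$.

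Grouping the $f_\ell$'s into three blocks,
\begin{align*}
P = \sum_{\ell < t_i} f_\ell,\quad H = \sum_{t_i \le \ell < t_j} f_\ell,\quad Q = \sum_{\ell \ge t_j} f_\ell,
\end{align*}
a direct expansion yields
\begin{align*}
a g_i + b g_j &= (a+b)(xP + Q) + a H + b (xH),\\
a x g_i + b g_j &= a\cdot x(xP + Q) + b(xP + Q) + (a+b)(xH).
\end{align*}
Thus (1') reduces to pairwise compatibility of the triple $\{xP + Q,\, H,\, xH\}$, and (2') to pairwise compatibility of $\{x(xP + Q),\, xP + Q,\, xH\}$.

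The key observation is that every pair arising in these triples reduces---possibly after factoring an extra $x$---to real-rootedness of a nonnegative scalar combination of a mixed family of the form
\begin{align*}
\mathcal{F}_T = \{ x f_\ell : \ell < T\} \cup \{ f_\ell : \ell \ge T\}, \qquad T \in \{t_i, t_j\}.
\end{align*}
Such a family is pairwise compatible: pairs inside $\{x f_\ell : \ell < T\}$ reduce to hypothesis (1) by factoring $x$ out; pairs inside $\{f_\ell : \ell \ge T\}$ come directly from (1); and every cross pair $(x f_{\ell_1}, f_{\ell_2})$ with $\ell_1 < T \le \ell_2$ satisfies $\ell_1 < \ell_2$, so it is supplied by hypothesis (2). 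A first application of Lemma~\ref{lem:pairwise compatible} promotes pairwise compatibility of $\mathcal{F}_T$ to full compatibility, giving real-rootedness of the combinations of $xP, Q, H, xH$ that arise. A second application of Lemma~\ref{lem:pairwise compatible} at the block level then gives real-rootedness of $a g_i + b g_j$ and $a x g_i + b g_j$.

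The main obstacle is the mismatch between the coefficient structure $(a + bx)H$ of the middle block and the scalar framework of Lemma~\ref{lem:pairwise compatible}: one must treat $H$ and $xH$ as two separate polynomials in the block-level triple, which is precisely what forces the two-layer reduction. A second subtlety is the orientation of cross pairs, since hypothesis (2) supplies $(xf_i, f_j)$ for $i < j$, not $(f_i, xf_j)$; fortunately, in the block decomposition the $x$-weighted block sums always involve only earlier indices $\ell < T$, while the unweighted ones involve only later indices $\ell \ge T$, matching the orientation of (2) exactly.
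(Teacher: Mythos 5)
Your argument is essentially correct, but it does not follow the paper, for the simple reason that the paper offers no proof of Theorem~\ref{thm:compatibility}: the statement is quoted from Savage and Visontai, and the only proof given here is of its interlacing counterpart, Theorem~\ref{thm:interlacing}, under the extra assumption of nonnegative coefficients, using Lemma~\ref{lem:roots-interlacing}, Lemma~\ref{mutual-inter-cor} and the decomposition $g_j(x)=g_i(x)+(x-1)\sum_{\gamma=t_i}^{t_j-1}f_\gamma(x)$. Your proof stays entirely inside the compatibility framework: the three-block splitting $P,H,Q$ (your $H$ is exactly the paper's middle block $\sum_{\gamma=t_i}^{t_j-1}f_\gamma$), the observation that cross pairs $xf_{\ell_1},f_{\ell_2}$ always occur with $\ell_1<\ell_2$ so that hypothesis (2) applies with the right orientation, and two applications of Lemma~\ref{lem:pairwise compatible}. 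This buys the theorem in its stated generality (only positive leading coefficients, no sign condition on the lower coefficients), whereas the paper's interlacing argument, though arguably more transparent, covers only the nonnegative-coefficient situation it actually needs; your route is in the spirit of the original compatibility-based proof being cited. Two small repairs are needed. First, the pairs $(H,\,xH)$ and $(x(xP+Q),\,xP+Q)$ do not reduce to nonnegative combinations of $\mathcal{F}_T$ even after factoring out an $x$: their combinations have the form $(c_1+c_2x)R$ with $R=H$ or $R=xP+Q$, so you must add the easy observation that multiplying a real-rooted polynomial by $c_1+c_2x$ with $c_1,c_2\ge0$ preserves real-rootedness, the real-rootedness of $R$ itself following from the compatibility of $\mathcal{F}_{t_i}$. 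Second, Lemma~\ref{lem:pairwise compatible} requires positive leading coefficients, so the degenerate cases in which a member of your triples is the zero polynomial ($t_i=t_j$, or $t_i=1$ together with $t_j=m+1$) should be disposed of separately; they are immediate, since the relevant combinations again collapse to $(c_1+c_2x)$ times a single real-rooted block sum.
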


As pointed out by Savage and Visontai, the description of the above theorem can be simplified by using the notion of interlacing if the polynomials $f_{1}(x),\dots,f_{m}(x)$ have only nonnegative coefficients.
Given two real-rooted polynomials $f(x)$ and $g(x)$ with positive leading coefficients, let $\{u_i\}$ be the set of zeros of $f(x)$ and $\{v_j\}$ the set of zeros of $g(x)$.
We say that {$g(x)$ interlaces $f(x)$}, denoted $g(x)\sep f(x)$, if either
$\deg f(x)=\deg g(x)=n$ and
\begin{align}\label{alt-def}
v_n\le u_n\le v_{n-1}\le\cdots\le v_2\le u_2\le v_1\le u_1,
\end{align}
or $\deg f(x)=\deg g(x)+1=n$ and
\begin{align}
u_{n}\le v_{n-1}\le\cdots\le v_{2}\le u_{2}\le v_{1}\le u_{1}.\label{int-def}
\end{align}
If all inequalities in \eqref{alt-def} or \eqref{int-def} are strict,
then we say that {$g(x)$ strictly interlaces $f(x)$}, denoted $g(x)\seps f(x)$. Parallel to the concept of pairwise compatibility, we say that a sequence of real polynomials $(f_{1}(x),\dots,f_{m}(x))$
with positive leading coefficients is mutually interlacing if $f_{i}(x) \sep f_{j}(x)$ for all $1\le i<j\le m$. As far as we know, this definition was first introduced by Fisk \cite{FiskPolynomials}.
It should be mentioned that the notations of interlacing and mutual interlacing adopted in this paper are a little different from Fisk's.
Interlacing of two polynomials is closely related to compatibility
in the sense of the following, due to Wagner \cite{Wagner2000Zeros}.
\begin{thm}[{\cite[Lemma 3.4]{Wagner2000Zeros}}]
\label{thm-wagner}
Suppose that $f(x)$ and $g(x)$ are two polynomials with nonnegative coefficients. Then the following statements are equivalent:
\begin{itemize}
\item[(1)]  $f(x)$ interlaces $g(x)$;

\item[(2)]  $f(x)$ and $g(x)$ are compatible, and  $xf(x)$ and $g(x)$ are compatible.
\end{itemize}
\end{thm}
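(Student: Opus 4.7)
The plan is to prove the two implications separately, bridging between root interlacing and real-rootedness of linear combinations via the classical theorem of Obreschkoff (two real polynomials with positive leading coefficients have interlacing real roots, in some direction, if and only if every real linear combination of them is real-rooted), and to exploit the fact that the nonnegativity of coefficients forces all real roots to lie in $(-\infty,0]$.

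\textbf{(1) $\Rightarrow$ (2).} From $f \sep g$, Obreschkoff immediately gives that $c_1 f + c_2 g$ has only real zeros for all real $c_1, c_2$, in particular for $c_1, c_2 \ge 0$, so $f$ and $g$ are compatible. For the second compatibility, I would use that the nonnegativity of the coefficients of $f$ places all of its real zeros in $(-\infty, 0]$, so $xf$ has the same zeros as $f$ together with an additional zero at $0$, which occupies the top of its ordered root list. Writing the zeros of $f$ as $\alpha_1 \ge \cdots \ge \alpha_n$ and those of $g$ as $\beta_1 \ge \cdots \ge \beta_m$, a direct check of the interlacing inequalities \eqref{alt-def} and \eqref{int-def} shows that $f \sep g$, combined with the automatic bound $\beta_1 \le 0$, yields $g \sep xf$. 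A second application of Obreschkoff then gives compatibility of $xf$ and $g$.

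\textbf{(2) $\Rightarrow$ (1).} Setting one coefficient to zero in the compatibility combinations shows that $f$ and $g$ are themselves real-rooted, and by nonnegativity of their coefficients all their zeros lie in $(-\infty, 0]$. A converse form of Obreschkoff (which is available here thanks to the one-sided root-location constraint) forces $f$ and $g$ to interlace in some direction, and inspection of the leading term of $c_1 f + c_2 g$ restricts $|\deg f - \deg g| \le 1$. To pin down the direction as $f \sep g$, I would invoke the second hypothesis: compatibility of $xf$ and $g$ gives interlacing of $xf$ and $g$, and since $\deg(xf) = \deg f + 1$ and the largest zero of $xf$ is $0$, which dominates every zero of $g$, the only consistent direction is $g \sep xf$. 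Reading off the resulting inequalities among the $\alpha_i$ and $\beta_j$ and deleting the artificial zero at $0$ recovers precisely $f \sep g$.

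\textbf{Main obstacle.} The delicate point is clearly direction (2) $\Rightarrow$ (1), particularly the handling of degenerate configurations in which $f$ and $g$ share zeros or the interlacing chain saturates with equalities. In such cases both $f \sep g$ and $g \sep f$ can a priori hold, so the direction must be extracted carefully from the interplay of both compatibility hypotheses. The crucial role of the extra zero of $xf$ at the origin is precisely to break this symmetry and single out the direction $f \sep g$.
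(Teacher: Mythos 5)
The paper itself gives no proof of this statement---it is simply quoted from Wagner---so your attempt has to stand on its own. Your direction (1) $\Rightarrow$ (2) is sound: from $f\sep g$ and the fact that nonnegative coefficients force all zeros into $(-\infty,0]$, one checks directly that $g\sep xf$, and the easy half of Obreschkoff's theorem then yields both compatibility claims.

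The genuine gap is in (2) $\Rightarrow$ (1). The ``converse form of Obreschkoff'' you invoke is not available under your hypotheses: the converse requires every real combination $\lambda f+\mu g$ (both signs) to be real-rooted, whereas compatibility controls only nonnegative combinations, and the one-sided root location does not repair this. Concretely, take $f(x)=(x+1)(x+4)=x^2+5x+4$ and $g(x)=(x+2)(x+3)=x^2+5x+6$: both have nonnegative coefficients, and $c_1f+c_2g=(c_1+c_2)x^2+5(c_1+c_2)x+(4c_1+6c_2)$ has discriminant $(c_1+c_2)(9c_1+c_2)\ge 0$, so $f$ and $g$ are compatible; yet their zeros are nested, $-4<-3<-2<-1$, so neither $f\sep g$ nor $g\sep f$ holds. (This does not contradict the theorem: here $xf+g=x^3+6x^2+9x+6$ has local minimum value $2$ at $x=-1$, hence only one real zero, so the second hypothesis fails.) Thus ``interlacing in some direction'' cannot be extracted from compatibility of $f$ and $g$ alone, and the same objection defeats your subsequent step ``compatibility of $xf$ and $g$ gives interlacing of $xf$ and $g$.'' The second hypothesis must enter from the start and in a stronger way than merely ``pinning down the direction'': for instance, since $f$ and $xf$ are trivially compatible, Lemma \ref{lem:pairwise compatible} makes the triple $f,xf,g$ compatible, so $(a+bx)f(x)+cg(x)$ is real-rooted for all $a,b,c\ge 0$, and from this richer family one can deduce the interlacing by locating sign changes of $g$ between consecutive zeros of $f$; alternatively one can argue through the Hermite--Biehler theorem (Theorem \ref{Hermite--Biehler}) applied to $g(z^2)+zf(z^2)$. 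As written, your proposal does not close this step.
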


With the above theorem, we now give an alternative description of Theorem \ref{thm:compatibility} when all the polynomials involved have only nonnegative coefficients.

\begin{thm}[{\cite[Theorem 2.4]{Savage$s$}}]
\label{thm:interlacing}
Given a sequence of polynomials $(f_{1}(x),\dots,f_{m}(x))$
with nonnegative coefficients, let $g_{1}(x),\dots,g_{m'}(x)$ be polynomials defined as in \eqref{comp-pre-tran}. If $(f_{1}(x),\dots,f_{m}(x))$ is mutually interlacing, then so is $(g_{1}(x),\dots,g_{m'}(x))$.
\end{thm}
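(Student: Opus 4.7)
The plan is to derive Theorem~\ref{thm:interlacing} directly from Theorem~\ref{thm:compatibility} by using the bridge between interlacing and compatibility supplied by Theorem~\ref{thm-wagner}. Because every $f_i(x)$ has nonnegative coefficients and positive leading coefficient (the latter forced by being in a mutually interlacing sequence), Wagner's equivalence applies to every pair in the input sequence.

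First, I would translate the hypothesis into the language of compatibility. For any $1\le i<j\le m$, mutual interlacing gives $f_i(x)\preceq f_j(x)$, i.e.\ $f_i(x)$ interlaces $f_j(x)$. By Theorem~\ref{thm-wagner} this is equivalent to: (a) $f_i(x)$ and $f_j(x)$ are compatible; and (b) $xf_i(x)$ and $f_j(x)$ are compatible. These are exactly hypotheses (1) and (2) of Theorem~\ref{thm:compatibility}.

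Next, I would invoke Theorem~\ref{thm:compatibility} to conclude that the derived polynomials $g_1(x),\ldots,g_{m'}(x)$ satisfy the analogous compatibility conditions (1') and (2') for every $1\le i<j\le m'$. Before applying Wagner's theorem in the reverse direction, I would verify that each $g_k(x)$ itself has nonnegative coefficients; this is immediate from \eqref{comp-pre-tran}, since both $f_\ell(x)$ and $xf_\ell(x)$ inherit nonnegative coefficients from $f_\ell(x)$, and the $g_k$'s are just sums of such polynomials with coefficient~$1$.

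Finally, applying Theorem~\ref{thm-wagner} in the other direction to each pair $(g_i(x),g_j(x))$ converts the two compatibility conditions back into $g_i(x)\preceq g_j(x)$, which is precisely the mutual interlacing of $(g_1(x),\ldots,g_{m'}(x))$. The substantive content is entirely packaged inside Theorem~\ref{thm:compatibility}; given that result together with Wagner's bridge, the remaining work is purely translational, so the only genuine concern is to make sure that the nonnegativity-of-coefficients hypothesis is present (and propagates) in both directions of Wagner's equivalence, which it does via the observation about $g_k(x)$ above.
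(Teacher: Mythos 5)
Your argument is correct, but it is not the route the paper takes. You deduce Theorem \ref{thm:interlacing} by translating mutual interlacing of $(f_1,\dots,f_m)$ into the two compatibility hypotheses of Theorem \ref{thm:compatibility} via Theorem \ref{thm-wagner}, invoking Theorem \ref{thm:compatibility}, checking that the $g_k$ again have nonnegative (and positive leading) coefficients, and translating back; the paper itself notes that exactly this deduction is available (``Note that Theorem \ref{thm:interlacing} is also implied by Theorems \ref{thm:compatibility} and \ref{thm-wagner}''), and it is essentially how the result is framed in Savage and Visontai's original paper. The proof given here is deliberately different: it is a new, self-contained argument carried out entirely in the language of interlacing, first showing each $g_i$ is real-rooted by repeated use of Lemma \ref{lem:roots-interlacing}, and then establishing $g_i \sep g_j$ from the decomposition $g_j(x)=g_i(x)+(x-1)\sum_{\gamma=t_i}^{t_j-1}f_\gamma(x)$ together with Lemma \ref{mutual-inter-cor}. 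What your route buys is brevity, but at the cost of outsourcing all the substance to Theorem \ref{thm:compatibility}, whose proof is not reproduced in this paper; what the paper's route buys is an independent proof that never passes through compatibility, illustrating the authors' stated point that for polynomials with nonnegative coefficients it is often more convenient to work directly with interlacing. Your verification that the nonnegativity (and positive leading coefficient) hypotheses propagate to the $g_k$, needed for the reverse application of Wagner's equivalence, is the right technical care for your approach, and I see no gap in it.
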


In the following we shall give a new proof of Theorem \ref{thm:interlacing}
with the notion of interlacing. It is our feeling that, for polynomials with nonnegative coefficients, it is more convenient to work with interlacing than with compatibility.
Before giving our proof, let us first note the following useful lemma, and the proof is omitted here.

\begin{lem}
[{\cite[Lemma 2.3]{Branden2006linear}, \cite[Proposition 3.5]{Wagner1992Total}}] \label{lem:roots-interlacing}
Let $g(x)$ and $\{f_{i}(x)\}_{i=1}^{n}$ be real-rooted polynomials with
positive leading coefficients, and let $F(x)=f_{1}(x)+f_{2}(x)+\cdots+f_{n}(x)$. Then
\begin{itemize}
\item[(1)] if $f_{i}(x) \sep g(x)$ for each $1\le i\le n$,
then $F(x)$ is real-rooted with $F(x)\sep g(x)$;

\item[(2)] if $g(x) \sep f_{i}(x)$ for each $1\le i\le n$,
then $F(x)$ is real-rooted with
$g(x) \sep F(x)$.
\end{itemize}
\end{lem}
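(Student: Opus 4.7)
The plan is to prove both parts by induction on $n$, reducing each to the case $n = 2$. For part (1), once we know that $f_1, f_2 \sep g$ implies $f_1 + f_2 \sep g$ (with the sum inheriting a positive leading coefficient), iterating gives $F_k := f_1 + \cdots + f_k \sep g$ for every $k$; the analogous scheme handles part (2). So everything reduces to the binary case.

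For part (1) with $n = 2$: let $m = \deg g$ and enumerate the zeros of $g$ as $u_1 \geq u_2 \geq \cdots \geq u_m$. The hypothesis $f_i \sep g$, combined with the positive leading coefficient of $f_i$, pins down the sign of $f_i(u_k)$: exactly $k - 1$ zeros of $f_i$ are greater than $u_k$ (whether $\deg f_i = m$ or $m - 1$), so $f_i(u_k)$ has sign $(-1)^{k-1}$. Consequently $(-1)^{k-1} F(u_k) \geq 0$ for $k = 1, \ldots, m$, and the intermediate value theorem supplies a real zero of $F$ in each closed interval $[u_{k+1}, u_k]$, giving at least $m - 1$ real zeros. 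Since $\deg F \in \{m-1, m\}$ (positivity of leading coefficients rules out top-degree cancellation) and non-real zeros occur in conjugate pairs, $F$ has exactly $\deg F$ real zeros; when $\deg F = m$, the sign of $F(u_m)$ combined with the end-behavior as $x \to -\infty$ (both determined by the parity of $m$) forces the remaining real zero into $(-\infty, u_m]$, yielding $F \sep g$.

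For part (2) with $n = 2$ the analysis is dual. Let $\ell = \deg g$ with zeros $v_1 \geq \cdots \geq v_\ell$. The interlacing $g \sep f_i$ and positive leading coefficients force $f_i(v_k)$ to have sign $(-1)^k$, so $(-1)^k F(v_k) \geq 0$ and the intermediate value theorem produces $\ell - 1$ real zeros of $F$, one in each $[v_{k+1}, v_k]$. Sign considerations at $v_1$ and $v_\ell$ together with end-behavior at $\pm\infty$ then yield an additional zero of $F$ in $(v_1, \infty)$ and, when $\deg f_i = \ell + 1$, a further one in $(-\infty, v_\ell)$. The total count of real zeros matches $\deg F$, and the located zeros satisfy the inequalities defining $g \sep F$.

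The main technical obstacle is handling degenerate configurations — repeated zeros of $g$, coincidences between zeros of some $f_i$ and a zero of $g$, or mismatched degrees among the $f_i$'s in the inductive step. These are resolved by a standard perturbation argument: replace each polynomial by a nearby one with simple zeros in generic position, apply the argument above, and pass to the limit, since the relation $\sep$ is a closed condition on the coefficient vector of polynomials with positive leading coefficient.
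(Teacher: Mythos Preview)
The paper does not supply a proof of this lemma: it is quoted from Br\"and\'en and Wagner with the remark ``the proof is omitted here.'' Your argument is the standard one underlying those references --- sign alternation of each $f_i$ at the zeros of $g$, the intermediate value theorem to locate zeros of $F$ in the resulting intervals, an end-behavior check to capture any remaining zero, and a limiting/perturbation step to dispose of coincidences and repeated roots --- and it is correct. Two minor points worth tightening: first, your phrase ``$f_i(u_k)$ has sign $(-1)^{k-1}$'' should read $(-1)^{k-1}f_i(u_k)\ge 0$, since under the weak relation $\sep$ a zero of $f_i$ may sit exactly at $u_k$; you already use the correct weak inequality for $F$, so this is only a wording issue. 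Second, in part~(2) the extra zero in $(-\infty,v_\ell]$ is needed precisely when $\deg F=\ell+1$, i.e.\ when \emph{at least one} $f_i$ has degree $\ell+1$, not when every $f_i$ does; again the argument you actually carry out is the right one. With these cosmetic fixes the proof is complete and matches the classical approach behind the cited sources.
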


Using the above lemma, we immediately have the following result and the proof is also omitted here. We would like to point out that it can also be proved by using compatibility.

\begin{lem}[{\cite[Corollary 3.6]{FiskPolynomials}}]\label{mutual-inter-cor}
Let $(f_1(x),f_2(x), \ldots, f_m(x))$ be a sequence of polynomials with
positive leading coefficients. If it is mutually interlacing, then, for any $1\le i< j \le m$, the polynomial $f_i(x)+\cdots+f_{j}(x)$ has only real zeros, and moreover,
$$f_i(x)\sep f_i(x)+\cdots+f_{j}(x) \sep f_{j}(x).$$
\end{lem}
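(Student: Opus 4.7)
The plan is to deduce the lemma directly from Lemma \ref{lem:roots-interlacing} by treating each desired interlacing as a fixed polynomial interlacing a sum. Set $F(x) = f_i(x) + f_{i+1}(x) + \cdots + f_j(x)$. There are three assertions to establish: (a) $F(x)$ is real-rooted, (b) $f_i(x) \sep F(x)$, and (c) $F(x) \sep f_j(x)$. Observe that (a) will come for free once either (b) or (c) is proved, since the interlacing notation already presupposes real-rootedness of both polynomials involved.

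For (b), I would apply part (2) of Lemma \ref{lem:roots-interlacing} with the ``base'' polynomial $g(x) = f_i(x)$ and the summands $f_i(x), f_{i+1}(x), \ldots, f_j(x)$. The required hypothesis is $f_i(x) \sep f_k(x)$ for each $k$ in the range $i \le k \le j$. For $k > i$ this is precisely the mutual-interlacing assumption; for $k = i$ it is the trivial reflexivity $f_i(x) \sep f_i(x)$, which holds in the weak sense adopted in the paper because every inequality in \eqref{alt-def} reduces to an equality when the two polynomials coincide. Assertion (c) is symmetric: I would apply part (1) of Lemma \ref{lem:roots-interlacing} with $g(x) = f_j(x)$, invoking mutual interlacing for $k < j$ and reflexivity for $k = j$.

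One should also verify that the positive leading-coefficient hypothesis of Lemma \ref{lem:roots-interlacing} is preserved under summation: the leading coefficient of $F(x)$ is either the sum of the leading coefficients of the maximal-degree summands, all of which are positive by assumption, or the leading coefficient of a unique highest-degree summand; in either case it is positive. There is no real obstacle here, since once Lemma \ref{lem:roots-interlacing} is granted the argument is essentially bookkeeping. The only subtlety worth flagging is the reflexive relation $f_k(x) \sep f_k(x)$; explicitly noting this lets one handle the endpoint terms $f_i$ and $f_j$ inside the sum uniformly, rather than carving them off as separate cases.
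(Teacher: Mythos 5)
Your proof is correct and is exactly the deduction the paper has in mind: the paper omits the proof, saying only that the lemma follows immediately from Lemma \ref{lem:roots-interlacing}, and your application of its parts (1) and (2) with $g=f_j$ and $g=f_i$ respectively, together with the weak reflexivity $f(x)\sep f(x)$ (which the paper itself uses implicitly), fills in precisely that step. No gaps.
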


We proceed to prove Theorem \ref{thm:interlacing}.

\noindent\textit{Proof of Theorem \ref{thm:interlacing}.}
We first show that for any $1\le i\le m$ the polynomial $g_i(x)$ has only real zeros. Note that
\begin{align*}
g_{i}(x)  = & x\sum_{\alpha=1}^{t_{i}-1}f_{\alpha}(x)+\sum_{\beta=t_{i}}^{m}f_{\beta}(x).
\end{align*}
By the mutual interlacing of $(f_{1}(x),\dots,f_{m}(x))$, we get
$$f_{\alpha}(x)\sep f_{\beta}(x)$$
for any $1\le \alpha\le t_i-1$ and $t_i\le \beta \le m$.
Since both $f_{\alpha}(x)$ and $f_{\beta}(x)$ are real-rooted polynomials with nonnegative coefficients, we then have
$$f_{\beta}(x)\sep xf_{\alpha}(x)$$
for any $1\le \alpha \le t_i-1$ and $t_i\le \beta \le m$.
From (1) of Lemma \ref{lem:roots-interlacing} we deduce that
$$\sum_{\beta=t_i}^m f_{\beta}(x)\sep xf_{\alpha}(x)$$
for any $1\le \alpha \le t_i-1$.
Further, by (2) of Lemma \ref{lem:roots-interlacing}, we obtain
$$\sum_{\beta=t_i}^m f_{\beta}(x)\sep x\sum_{\alpha=1}^{t_i-1}f_{\alpha}(x).$$
Again, by (2) of Lemma \ref{lem:roots-interlacing}, it follows that
$$\sum_{\beta=t_i}^m f_{\beta}(x)\sep x\sum_{\alpha=1}^{t_i-1}f_{\alpha}(x)+\sum_{\beta=t_i}^m f_{\beta}(x)=g_i(x),$$
which implies the real-rootedness of $g_i(x)$.

Now we can prove the mutual interlacing of $(g_1(x), g_2(x), \ldots, g_{m'}(x))$. By definition, it suffices to show that $g_i(x)$ interlaces $g_j(x)$ for all $1\le i<j\le m'$. Without loss of generality, we may assume that $g_i(x)\neq g_j(x)$, namely, $t_i< t_j$. It is obvious that
\begin{align*}
g_{j}(x)  = & \
x\sum_{\alpha=1}^{t_{j}-1}f_{\alpha}(x)+\sum_{\beta=t_{j}}^{m}f_{\beta}(x)\\
 = & \ x\sum_{\alpha=1}^{t_{i}-1}f_{\alpha}(x)+\sum_{\beta=t_{i}}^{m}f_{\beta}(x)+(x-1)\sum_{\gamma=t_{i}}^{t_{j}-1}f_{\gamma}(x)\\
  = &\  g_{i}(x)+(x-1)\sum_{\gamma=t_{i}}^{t_{j}-1}f_{\gamma}(x).
\end{align*}
By Lemma \ref{mutual-inter-cor}, the polynomial $\sum_{\gamma=t_{i}}^{t_{j}-1}f_{\gamma}(x)$ has only real zeros.
To prove $g_{i}(x)\sep g_{j}(x)$, by (2) of Lemma \ref{lem:roots-interlacing}, it suffices to show that
\begin{align*}
g_i(x)\sep (x-1)\sum_{\gamma=t_{i}}^{t_{j}-1}f_{\gamma}(x),
\end{align*}
which is equivalent to
\begin{align*}
\sum_{\gamma=t_{i}}^{t_{j}-1}f_{\gamma}(x) \sep g_i(x)=x\sum_{\alpha=1}^{t_{i}-1}f_{\alpha}(x)+\sum_{\beta=t_{i}}^{m}f_{\beta}(x).
\end{align*}
Again, by (2) of Lemma \ref{lem:roots-interlacing}, we only need to show that
\begin{align}\label{equ-partab}
\sum_{\gamma=t_{i}}^{t_{j}-1}f_{\gamma}(x) \sep x\sum_{\alpha=1}^{t_{i}-1}f_{\alpha}(x) \quad \mbox{and} \quad \sum_{\gamma=t_{i}}^{t_{j}-1}f_{\gamma}(x)\sep \sum_{\beta=t_{i}}^{m}f_{\beta}(x).
\end{align}
By Lemma \ref{mutual-inter-cor}, for any $t_{i}\le\gamma<t_{j}$, we have
$$\sum_{\alpha=1}^{t_{i}-1}f_{\alpha}(x)\ \sep\  f_{\gamma}(x).$$
Then, by (2) of Lemma \ref{lem:roots-interlacing}, it follows that
$$\sum_{\alpha=1}^{t_{i}-1}f_{\alpha}(x)\ \sep\ \sum_{\gamma=t_{i}}^{t_{j}-1} f_{\gamma}(x),$$
which is equivalent to the first part of \eqref{equ-partab}.

The second part of  \eqref{equ-partab}  can be proved along the same line.
By Lemma \ref{mutual-inter-cor}, for any $t_j\le \beta\le m$,
we have
$$\sum_{\gamma=t_{i}}^{t_{j}-1}f_{\gamma}(x)\sep f_{\beta}(x).$$
Then, by (2) of Lemma \ref{lem:roots-interlacing},
$$\sum_{\gamma=t_{i}}^{t_{j}-1}f_{\gamma}(x)\sep \sum_{\beta=t_j}^{m} f_{\beta}(x),$$
and hence
$$\sum_{\gamma=t_{i}}^{t_{j}-1}f_{\gamma}(x)\sep \sum_{\gamma=t_{i}}^{t_{j}-1}f_{\gamma}(x)+\sum_{\beta=t_j}^{m} f_{\beta}(x)=\sum_{\gamma=t_{i}}^{m}f_{\gamma}(x),$$
as desired.
This completes the proof.
\qed

Theorem \ref{thm:interlacing} is efficient for proving the mutual interlacing of the refined Eulerian polynomials satisfying recurrence relations as in \eqref{eq:T_n,i}. However, it is not enough for our proof of the real-rootedness of the affine Eulerian polynomials of type $D$. We also need the following result due to Haglund, Ono, and Wagner.

\begin{thm}[{\cite[Lemma~8]{Haglund1999Theorems}}]\label{Haglund1999}
Let $f_1(x), \dots, f_m(x)$ be real-rooted polynomials with nonnegative coefficients, and let  $a_1, \dots, a_m \ge 0$  and $b_1, \dots, b_m \ge 0$ be such that
$a_{i}b_{i+1} \ge b_{i}a_{i+1}$ for all $1 \le i \le m-1.$ If the sequence $(f_1(x), \dots, f_m(x))$ is mutually interlacing, then
$\sum_{i=1}^{m}a_{i}f_{i}(x)$ interlaces $\sum_{i=1}^{m}b_{i}f_{i}(x)$.
\end{thm}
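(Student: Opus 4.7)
\emph{Proof proposal.} My plan is to apply the classical Obreschkoff--Hermite--Biehler criterion to reduce the desired interlacing $A\sep B$, where $A(x)=\sum_i a_if_i(x)$ and $B(x)=\sum_i b_if_i(x)$, to the statement that every real linear combination
\[
\alpha A(x)+\beta B(x)\;=\;\sum_{i=1}^{m}(\alpha a_i+\beta b_i)\,f_i(x)
\]
has only real zeros.

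The hypothesis $a_ib_{i+1}\ge b_ia_{i+1}$, combined with $a_i,b_i\ge 0$, says geometrically that the points $(a_i,b_i)\in\mathbb R_{\ge 0}^{2}$ lie in the closed first quadrant and sweep through it with non-decreasing polar angle $\arctan(b_i/a_i)$ as $i$ grows. Consequently, for any fixed direction $(\alpha,\beta)\in\mathbb R^{2}$, the inner products $c_i:=\alpha a_i+\beta b_i=(\alpha,\beta)\cdot(a_i,b_i)$ undergo at most one sign change in $i$: either all $c_i$ share one sign, or there is a cutoff index $k$ with $c_i$ of one sign for $i\le k$ and of the opposite sign for $i>k$.

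If all the $c_i$ share one sign, then $\sum_i c_if_i$ is (up to an overall sign) a nonnegative combination of the mutually interlacing family $(f_1,\ldots,f_m)$, hence real-rooted by Lemma~\ref{mutual-inter-cor}. In the mixed-sign case I would write
\[
\sum_{i=1}^{m}c_if_i\;=\;\pm\bigl(P(x)-Q(x)\bigr),\qquad P:=\sum_{i\le k}|c_i|f_i,\quad Q:=\sum_{i>k}|c_i|f_i,
\]
both of which have nonnegative coefficients. Since $f_i\sep f_j$ for every $i\le k<j$, two applications of Lemma~\ref{lem:roots-interlacing}---first to collapse the low-index sum against each fixed $f_j$, then to collapse the high-index sum---yield $P\sep Q$. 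A second appeal to Obreschkoff--Hermite--Biehler, now in the direction ``interlacing implies real-rootedness of every real linear combination,'' shows that $P-Q$, and hence $\alpha A+\beta B$, is real-rooted.

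Pooling the cases, $\alpha A+\beta B$ is real-rooted for every $(\alpha,\beta)\in\mathbb R^{2}$, so by Obreschkoff either $A\sep B$ or $B\sep A$. To pin down the correct direction as $A\sep B$, I would use a continuity argument: the admissible coefficient region $\{(\vec a,\vec b):a_i,b_i\ge 0,\ a_ib_{i+1}\ge b_ia_{i+1}\}$ is connected (under the parameterization $\lambda_i=b_i/a_i$ with $\lambda_i$ non-decreasing), and the direction of interlacing is locally constant off the proportionality locus where $A$ and $B$ share factors; checking the extremal reference $a_1=b_m=1$ with all other coordinates zero gives $A=f_1\sep f_m=B$, which fixes the direction throughout. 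The principal obstacle is the explicit reliance on the Obreschkoff--Hermite--Biehler equivalence, a classical result not restated in the excerpt that must be cited carefully; a secondary nuisance is fixing the interlacing direction, handled by the above continuity argument. An induction on $m$ with the clean $m=2$ base (where $A=\tfrac{a_2}{b_2}B+\tfrac{a_1b_2-a_2b_1}{b_2}f_1$ visibly interlaces $B$ via Lemma~\ref{lem:roots-interlacing}) would avoid Obreschkoff, but the inductive step appears combinatorially awkward because the truncated sums $\sum_{i<m}a_if_i$ and $\sum_{i<m}b_if_i$ do not fit with a single $a_mf_m$ and $b_mf_m$ into the $m=2$ template.
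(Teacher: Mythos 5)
The first half of your argument is essentially sound: with no pair $(a_i,b_i)=(0,0)$ the consecutive conditions $a_ib_{i+1}\ge b_ia_{i+1}$ do force the numbers $c_i=\alpha a_i+\beta b_i$ to change sign at most once, Lemma \ref{mutual-inter-cor} and Lemma \ref{lem:roots-interlacing} then give $P\sep Q$, and the two directions of the Hermite--Kakeya--Obreschkoff theorem yield first that every $\alpha A+\beta B$ is real-rooted and then that $A$ and $B$ interlace in one of the two possible orders. (Do note that the ``at most one sign change'' claim can fail if a zero vector $(a_i,b_i)=(0,0)$ sits between nonzero ones; this degenerate configuration is also problematic for the statement itself and does not occur in the paper's application, where $a_i=n-i$ and $b_i=i$ are strictly positive, but it should be excluded or handled explicitly.)

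The genuine gap is the direction-pinning step. For real-rooted polynomials with positive leading coefficients, $A\sep B$ and $B\sep A$ hold simultaneously exactly when $A$ and $B$ are proportional, so your picture of a ``bad locus'' is correct; but connectedness of the admissible region together with one reference point does not finish the proof, because nothing in your argument prevents the deformation path from crossing the proportionality locus, and on the far side of such a crossing the direction may have flipped --- this is perfectly consistent with HKO. Your reference point $(\vec a,\vec b)=(e_1,e_m)$ aggravates the problem: to reach it you must send most coordinates to zero, which is exactly where proportionality degeneracies (only one $f_i$ surviving in both sums, or all $f_i$ equal, in which case the whole region is the bad locus) and the zero-vector issue above occur, and where the degrees of $A$ and $B$ can drop, so even the closedness of $\sep$ under the paper's degree conventions needs care. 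Repairing this requires an additional idea (a perturbation to strictly positive data and strictly interlacing $f_i$ with an argument that the proportionality locus is avoided, a direct comparison of extreme roots, or an induction on $m$), at which point the argument is no shorter than the routes the paper indicates: the statement is quoted from Haglund--Ono--Wagner, is implied by Theorem \ref{thm:compatibility} together with Theorem \ref{thm-wagner}, and is exhibited in Section \ref{section-interlacing} as the special case of Fisk's criterion (Theorem \ref{thm:fisk}) applied to the $2\times m$ matrix with rows $(a_1,\dots,a_m)$ and $(b_1,\dots,b_m)$, whose $2\times2$ minors are nonnegative by precisely your non-decreasing-angle observation; that route dispenses with both the HKO equivalence and the direction ambiguity.
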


Savage and Visontai \cite{Savage$s$} mentioned that Theorem \ref{thm:compatibility} together with Theorem \ref{thm-wagner} implies Theorem \ref{Haglund1999}. Note that Theorem \ref{thm:interlacing} is also implied by Theorems \ref{thm:compatibility} and \ref{thm-wagner}.
In the following we shall give a unified interpretation of Theorems
\ref{thm:interlacing} and Theorem \ref{Haglund1999} via Fisk's theory on matrices preserving mutual interlacing \cite{FiskPolynomials}.
Following Fisk, we say that a matrix $M=(m_{i,j})$ is an $NX$ matrix if its entries are either nonnegative constants or positive multiples of $x$.
Fisk gave the following criterion to determine whether an $NX$ matrix preserves the mutually interlacing property.

\begin{thm}[{\cite[Propostion 3.72]{FiskPolynomials}}]\label{thm:fisk}
An $NX$ matrix $M=(m_{i,j})$ preserves mutually interlacing sequences of polynomials with nonnegative coefficients if the following conditions are satisfied:
\begin{itemize}
\item[(1)] All entries that lie in the southwest of a multiple of $x$ are multiples of $x$.

\item[(2)] For any two by two submatrix of $M$ having the form
$
\begin{pmatrix}
a & b\\
c & d
\end{pmatrix} \mbox{or}
\begin{pmatrix}
ax & bx\\
cx & dx
\end{pmatrix},
$
we have $ad-bc\geq 0$.

\item[(3)] For any two by two submatrix of $M$ having the form
$
\begin{pmatrix}
a & b\\
cx & dx
\end{pmatrix}\mbox{or}
\begin{pmatrix}
ax & b\\
cx & d
\end{pmatrix},
$
we have $ad-bc\leq 0$.
\end{itemize}
\end{thm}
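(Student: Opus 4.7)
The plan is to show that if $(f_1,\dots,f_n)$ is a mutually interlacing sequence of polynomials with nonnegative coefficients, then the sequence $(g_1,\dots,g_m)$ with $g_i=\sum_{k}m_{i,k}f_k$ is also mutually interlacing. Since mutual interlacing is a pairwise condition and the three hypotheses pass to any $2\times n$ submatrix of $M$, the argument reduces to fixing two rows $i<j$ and establishing $g_i\sep g_j$.

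First I would use condition~(1) to describe the positions of $x$-multiples in rows $i$ and $j$: every $x$-multiple in row $i$ at column $k$ forces every entry of row $j$ at columns $<k$ to be an $x$-multiple. Together with the restrictions on $2\times 2$ patterns imposed by (2) and (3), this staircase structure lets me decompose each row as $g_i = P_i(x) + x\,Q_i(x)$, where $P_i$ collects the terms $m_{i,k}f_k$ with $m_{i,k}$ a constant and $x\,Q_i$ collects the remaining terms with the factor of $x$ pulled out. The key point is that the supports of $P_i,Q_i,P_j,Q_j$ line up across columns in a way dictated by the staircase.

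Next I would produce a battery of pairwise interlacings among $P_i,Q_i,P_j,Q_j$ (and their multiples by $x$) by applying Theorem~\ref{Haglund1999}. The hypothesis of that theorem is the determinant inequality $a_kb_{k+1}\ge a_{k+1}b_k$, which for our coefficient sequences coincides (up to a sign flip triggered by the factor $x$) with the $2\times 2$ determinant of the corresponding submatrix of $M$. The $\ge 0$ inequality in condition~(2) yields Haglund's hypothesis directly for pairs of columns of the same entry-type, giving $P_i\sep P_j$, $Q_i\sep Q_j$ and, by scaling, $xQ_i\sep xQ_j$. The $\le 0$ inequality in condition~(3) applies precisely at $2\times 2$ submatrices that cross the boundary between constant and $x$-multiple entries; after pulling out the factor of $x$ the sign flips and one again recovers Haglund's hypothesis, producing the mixed interlacings such as $P_i\sep xQ_j$.

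Finally I would combine these pairwise interlacings via Lemma~\ref{lem:roots-interlacing}, together with the basic fact that $h\sep xh$ for any $h$ with nonnegative coefficients, to conclude $g_i = P_i + xQ_i \sep P_j + xQ_j = g_j$. The main obstacle is the bookkeeping at this last step: one must check that the collection of pairwise interlacings produced in the previous step is rich enough and oriented in the correct directions to feed coherently through the two clauses of Lemma~\ref{lem:roots-interlacing}, so that the summands assemble into $g_j$ (and into $g_i$) without breaking the chain at a degree mismatch caused by the extra root introduced by the factor of $x$.
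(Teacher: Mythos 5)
There is no proof of this statement in the paper to compare against: Theorem~\ref{thm:fisk} is quoted from Fisk (Proposition~3.72) and used as a black box, so your attempt has to stand on its own. Its first two stages are essentially fine: mutual interlacing is a pairwise condition and hypotheses (1)--(3) restrict to any two rows, so one may fix rows $i<j$; condition (1) forces each row to have its $x$-entries in an initial segment of columns, say columns $1,\dots,s$ in row $i$ and $1,\dots,t$ in row $j$ with $s\le t$; and writing $g_i=P_i+xQ_i$, $g_j=P_j+xQ_j$, the interlacings you list ($P_i\sep P_j$, $Q_i\sep Q_j$, hence $xQ_i\sep xQ_j$, and $Q_j\sep P_i$, hence $P_i\sep xQ_j$) do follow from Theorem~\ref{Haglund1999} together with conditions (2), (3) and the ordering of the $f_k$'s. (Do note, though, that the straddling column pairs, e.g.\ submatrices of the shape $\begin{pmatrix} ax & b\\ cx & dx\end{pmatrix}$, carry no determinant hypothesis at all, so the ``sign flip recovers Haglund'' heuristic does not apply to them; the relations involving those columns must come purely from the order of the $f_k$'s.)

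The genuine gap is the final assembly, which you yourself flag as ``bookkeeping'': it is not bookkeeping, and with the pieces you have it cannot be carried out. To conclude $g_i\sep g_j$ from Lemma~\ref{lem:roots-interlacing} you would need either (clause (1)) $P_i\sep g_j$ \emph{and} $xQ_i\sep g_j$, or (clause (2)) $g_i\sep xQ_j$ \emph{and} $g_i\sep P_j$; but the second relation in each pair is false in general. Take all $f_k$ of the same degree with positive constant terms and, say, rows $(x,1,1)$ and $(x,x,1)$ (these satisfy (1)--(3); they occur in the matrix realizing Theorem~\ref{thm:interlacing}). Then $xQ_i=xf_1$ has largest zero $0$ while $g_j=xf_1+xf_2+f_3$ satisfies $g_j(0)=f_3(0)>0$, so $xQ_i\sep g_j$ is impossible; and $\deg g_i=\deg P_j+1$ with $P_j=f_3$, so $g_i\sep P_j$ is ruled out by the degree requirement in the definition of $\sep$. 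The same obstruction blocks any finer splitting that isolates an $x$-multiple summand of $g_i$ or a constant-type summand of $g_j$. Overcoming it requires a further idea: in the special case of $\{1,x\}$-rows the paper's proof of Theorem~\ref{thm:interlacing} does this by rewriting $g_j=g_i+(x-1)\sum_\gamma f_\gamma$ and converting $g_i\sep(x-1)h$ into $h\sep g_i$ (legitimate because the extra zero at $1$ lies to the right of all other zeros), a device that has no analogue for general nonnegative coefficient rows, where $g_j-g_i$ need not factor as $(x-1)h$. So the heart of Fisk's theorem --- precisely the step you defer --- remains unproved in your outline.
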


Based on the above theorem, we could have a new look at Theorems
\ref{thm:interlacing} and \ref{Haglund1999}.
The latter could be considered as a special case of Theorem \ref{thm:fisk} with the corresponding $NX$ matrix given by
$$\begin{pmatrix}
a_1 & a_2 & \dots & a_n\\
b_1 & b_2 & \dots & b_n
\end{pmatrix}.
$$
For the former, the corresponding $NX$ matrix is a $\{1,x\}$-matrix with its $i$-th row given by
$$(\underset{(t_i-1)'s}{\underbrace{x,x,\ldots,x}}, \underset{(m-t_i+1)'s}{\underbrace{1,1,\ldots,1}}).$$
It is easy to verify that both matrices satisfy the conditions in Theorem \ref{thm:fisk}.

\section{The \texorpdfstring{$q$}{Lg}-Eulerian polynomials of type \texorpdfstring{$D$}{Lg}}
\label{section-q-Eulerian}

In this section we aim to prove the real-rootedness of the $q$-Eulerian polynomials $D_n(x;q)$ for any positive $q$.
Brenti \cite{Brenti1994$q$} noted that the type $D$ statistics $\nege_D$ and ${\des}_{D}$ can be extended to all signed permutations, and proved that \begin{align*}
(1+q)D_n(x;q) = \sum_{\sigma\in{{B}_n}}q^{\nege \sigma}x^{{\des}_{D}\sigma}.
\end{align*}
Let
\begin{align*}
T_n(x;q)= \sum_{\sigma\in{{B}_n}}q^{\nege \sigma}x^{{\des}_{D}\sigma}.
\end{align*}
It is obvious that $T_n(x;q)$ has only real zeros if and only if
$D_n(x;q)$ has only real zeros. In the following we shall focus on the real-rootedness of $T_n(x;q)$.

To prove that $T_n(x;q)$ has only real zeros for positive $q$,
let us first give a proper refinement of $T_n(x;q)$.
To this end, we need to interpret $T_n(x;q)$ as the generating functions of certain statistics over inversion sequences. This could be easily done by using a map $\psi:{{B}_n}\rightarrow\I_{n}^{(2,4,\dots)}$ established by
Savage and Visontai \cite{Savage$s$}. Precisely, a signed permutation $\sigma=(\sigma_{1},\ldots,\sigma_{n})\in{{B}_n}$ under $\psi$  is mapped to an inversion sequence $(e_{1},\ldots,e_{n})\in \I_{n}^{(2,4,\dots)}$ given by, for $1\le i \le n $,
\begin{align*}
e_{i}=\begin{cases}
t_{i} & \mbox{if} \quad \sigma_{i}>0\,,\\
2i-t_{i}-1 & \mbox{if} \quad \sigma_{i}<0,
\end{cases}
\end{align*}
where $t_{i}\ =\ \left|\{j\in[i-1] : |\sigma_{j}|>|\sigma_{i}|\}\right|.$
The map $\psi$ satisfies the following properties.

\begin{lem}[{\cite[Theorem 3.12]{Savage$s$}}]
\label{bijection}
The map $\psi:{{B}_n}\rightarrow\I_{n}^{(2,4,\dots)}$ is a bijection satisfying the following properties:
\begin{itemize}
\item[(1)] $\sigma_{i}<0$ if and only if $e_{i}\ge i$, for any $1\le i \le n$;
\item[(2)] $\sigma_{1}+\sigma_{2}<0$ if and only if $e_{1}+\tfrac{e_{2}}{2}\geq \tfrac{3}{2}$;
\item[(3)] $\sigma_{i}>\sigma_{i+1}$ if and only if $\tfrac{e_{i}}{i}<\tfrac{e_{i+1}}{i+1}$
for $1\le i\le n-1$;
\item[(4)] $\sigma_{n-1}+\sigma_{n}>0$ if and only if $\tfrac{e_{n-1}}{n-1}+\tfrac{e_{n}}{n}<\tfrac{n-1}{n}$.
\end{itemize}
\end{lem}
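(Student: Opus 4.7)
The plan is to prove the four properties after first pinning down that $\psi$ is a bijection. The starting observation is that $t_i \in \{0, 1, \ldots, i-1\}$, so the defining formula gives $e_i \in \{0, \ldots, i-1\}$ when $\sigma_i > 0$ and $e_i \in \{i, \ldots, 2i-1\}$ when $\sigma_i < 0$; in particular $0 \le e_i \le 2i-1$, so $\psi(\sigma)\in\I_n^{(2,4,\ldots)}$, and property (1) is immediate from this dichotomy. To invert $\psi$, I would read off the sign of each $\sigma_i$ via (1), then read off $t_i$ (as $e_i$ or $2i-1-e_i$), and finally recover $(|\sigma_1|, \ldots, |\sigma_n|)\in\mathfrak{S}_n$ from the standard Lehmer-code bijection. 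A cardinality check, $|B_n|=2^n n! = 2\cdot 4\cdots (2n) = |\I_n^{(2,4,\ldots)}|$, then confirms that $\psi$ is a bijection.

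For properties (2) and (4) I would run a direct eight-case analysis on $(\operatorname{sign}\sigma_k, \operatorname{sign}\sigma_{k+1}, \operatorname{sign}(|\sigma_k|-|\sigma_{k+1}|))$ for $k=1$ and $k=n-1$ respectively. In each case $e_k$ and $e_{k+1}$ are uniquely determined by the defining formula, so one just tabulates $e_1 + e_2/2$ against the sign of $\sigma_1+\sigma_2$ (cleaner still after clearing denominators to $2e_1+e_2\ge 3$) and, for (4), tabulates $n\,e_{n-1}+(n-1)e_n$ against $(n-1)^2$ while comparing with the sign of $\sigma_{n-1}+\sigma_n$. The two mixed-sign cases are the interesting ones because the sign of the sum depends on $|\sigma_k|$ vs $|\sigma_{k+1}|$; in each of those I would use the magnitude comparison to fix $t_k$ and $t_{k+1}$ and then verify the inequality directly.

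Property (3) is where the real care is needed. I would again split on the signs of $\sigma_i, \sigma_{i+1}$. In the two mixed-sign cases the descent condition $\sigma_i > \sigma_{i+1}$ is either automatic or impossible, and one has to match this against the sharp inequalities $e_i/i < e_{i+1}/(i+1)$ or $\ge$, using only the range bounds $e_i\in[0,i-1]$ or $[i,2i-1]$ (and similarly for $e_{i+1}$); both endpoints, $e_i=i-1$ with $e_{i+1}=i$ and the reverse, need to be checked to confirm strictness. In the two same-sign cases the comparison $\sigma_i\gtrless\sigma_{i+1}$ becomes a comparison of $|\sigma_i|$ and $|\sigma_{i+1}|$, and the bookkeeping reduces to the identity
\[
t_{i+1} \;=\; \bigl|\{j\in[i-1] : |\sigma_j|>|\sigma_{i+1}|\}\bigr| \;+\; \mathbf{1}_{|\sigma_i|>|\sigma_{i+1}|},
\]
together with $t_i = |\{j\in[i-1] : |\sigma_j|>|\sigma_i|\}|$.

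The main obstacle is precisely this bookkeeping step in (3): $t_i$ and $t_{i+1}$ count with respect to different thresholds, so one must reduce the inequality $(i+1)e_i < i\,e_{i+1}$ to a statement about the rank of $|\sigma_i|$ within $\{|\sigma_j| : j\le i,\ |\sigma_j|>|\sigma_{i+1}|\}$ (when $|\sigma_i|>|\sigma_{i+1}|$) or the analogous set below (when $|\sigma_i|<|\sigma_{i+1}|$), and then check the resulting integer inequality holds uniformly. I would organize this by writing $t_{i+1}-t_i$ as a signed count of indices $j\in[i-1]$ whose $|\sigma_j|$ lies strictly between $|\sigma_i|$ and $|\sigma_{i+1}|$, and checking that the required gap between $(i+1)e_i$ and $i\,e_{i+1}$ opens up in exactly the descent direction. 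Once (3) is settled, the four properties together complete the proof.
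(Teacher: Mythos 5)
The paper itself gives no proof of this lemma --- it is quoted verbatim from Savage and Visontai --- so your direct argument is not competing with anything in the text; it is essentially the standard proof from the cited source, and its skeleton is sound. The sign dichotomy gives property (1) together with the range bound $0\le e_i\le 2i-1$; bijectivity follows by recovering the signs via (1), then $t_i$, then $(|\sigma_1|,\dots,|\sigma_n|)$ from the Lehmer-type code; and (2)--(3) reduce to the sign/magnitude case analyses you describe. In particular the identity $t_{i+1}=|\{j\in[i-1]:|\sigma_j|>|\sigma_{i+1}|\}|+\chi(|\sigma_i|>|\sigma_{i+1}|)$ is exactly the bookkeeping needed in the equal-sign cases of (3): for instance, with both entries positive and $|\sigma_i|>|\sigma_{i+1}|$ it gives $t_{i+1}\ge t_i+1$, and then $(i+1)t_i<i\,t_{i+1}$ follows from $t_i\le i-1$; the reverse cases close the same way, and the mixed-sign cases follow from the range bounds alone, as you say.

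One concrete snag: for item (4) you plan to tabulate $n\,e_{n-1}+(n-1)e_n$ against $(n-1)^2$, i.e.\ you take the printed inequality $\tfrac{e_{n-1}}{n-1}+\tfrac{e_n}{n}<\tfrac{n-1}{n}$ at face value. That right-hand side is a typo in the statement, and the equivalence is false as printed: for $n=2$ and $\sigma=(-1,2)$ one has $\sigma_1+\sigma_2=1>0$, yet $\psi(\sigma)=(1,0)$ gives $e_1+\tfrac{e_2}{2}=1\not<\tfrac12$. The correct threshold is $\tfrac{2n-1}{n}$, which is what the paper actually uses in \eqref{def:affine-asc-D} and in the proof of Lemma \ref{lem:S-recurrence}; after clearing denominators the target becomes $n\,e_{n-1}+(n-1)e_n<(2n-1)(n-1)$, and with this threshold your case analysis (using the same bookkeeping identity for $t_n$) does close in all sign patterns. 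So your plan for (4) would stall as written --- not because the method is wrong, but because the inequality you set out to verify is misstated; correct the threshold and the proof goes through.
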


For an inversion sequence $\e=(e_{1},\dots,e_{n})\in\I_{n}^{(2,4,\dots)}$, let
$$\exc \e= \sum_{i=1}^n\chi \left(e_i\ge i\right).$$
By (1) of Lemma \ref{bijection}, we see that $\exc \e=\nege \psi^{-1}(\e)$.
By (2) and (3) of Lemma \ref{bijection}, we have
$\asc_D \e=\des_D \psi^{-1}(\e)$. The following result is immediate.

\begin{lem}\label{q-trans} For any $n\geq 2$, we have
\begin{align*}
 T_n(x;q) = \sum_{\e\in\I_{n}^{(2,4,\dots)}}q^{\exc \e}x^{\asc_{D}\e}.
\end{align*}
\end{lem}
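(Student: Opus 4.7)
The statement is essentially a direct translation of the descent generating function over $B_n$ into a statistic generating function over $\s$-inversion sequences via the bijection $\psi$ from Lemma \ref{bijection}. My plan is simply to assemble the translations of the two relevant statistics under $\psi$ and apply the substitution.

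First, I would note that by (1) of Lemma \ref{bijection}, the condition $\sigma_i < 0$ is equivalent to $e_i \ge i$, where $\e = \psi(\sigma)$. Summing the indicator over $i \in [n]$ gives
\[
\nege \sigma \;=\; \sum_{i=1}^{n} \chi(\sigma_i < 0) \;=\; \sum_{i=1}^{n} \chi(e_i \ge i) \;=\; \exc \e.
\]

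Next, I would compare $\Des_D \sigma$ with $\Asc_D \e$. The $0$-contribution to $\Des_D \sigma$ records whether $\sigma_1 + \sigma_2 < 0$, which by (2) of Lemma \ref{bijection} is equivalent to $e_1 + e_2/2 \ge 3/2$, the $0$-contribution to $\Asc_D \e$. For $i \in [n-1]$, the condition $\sigma_i > \sigma_{i+1}$ is equivalent to $e_i/i < e_{i+1}/(i+1)$ by (3) of Lemma \ref{bijection}. Hence $\Des_D \sigma = \Asc_D \e$ as subsets of $\{0,1,\dots,n-1\}$, and in particular $\des_D \sigma = \asc_D \e$.

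Finally, since $\psi$ is a bijection between $B_n$ and $\I_n^{(2,4,\dots)}$ (again by Lemma \ref{bijection}), I can reindex the defining sum of $T_n(x;q)$ along $\psi$:
\[
T_n(x;q) \;=\; \sum_{\sigma \in B_n} q^{\nege \sigma}\, x^{\des_D \sigma} \;=\; \sum_{\e \in \I_n^{(2,4,\dots)}} q^{\exc \e}\, x^{\asc_D \e},
\]
which is the claimed identity. There is no real obstacle here; the proof is purely a bookkeeping argument, and the hypothesis $n \ge 2$ simply ensures that the condition $e_1 + e_2/2 \ge 3/2$ used in (2) of Lemma \ref{bijection} is well defined.
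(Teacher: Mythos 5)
Your proposal is correct and follows essentially the same route as the paper: both translate $\nege\sigma$ into $\exc\e$ via (1) of Lemma \ref{bijection}, translate $\des_D\sigma$ into $\asc_D\e$ via (2) and (3), and then reindex the defining sum along the bijection $\psi$. Your write-up merely spells out the bookkeeping that the paper leaves as ``immediate.''
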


Now we can give a refinement of $T_n(x;q)$.
Let
\begin{align*}
T_{n,i}(x;q)\ =\  \sum_{\e\in\I_{n}^{(2,4,6,\dotsc)}}\chi(e_{n}=i)\, q^{\exc \e} x^{\asc_{D}\e}.
\end{align*}
It is clear that
$$T_n(x;q)=\sum_{i=0}^{2n-1}T_{n,i}(x;q), \qquad T_{n,i}(x;1)=T_{n,i}(x).$$
These polynomials $T_{n,i}(x;q)$ satisfy the following recurrence relation.
\begin{lem}
\label{q-T-recurrence}
For $n\geq3$ and $0\le i\le 2n-1,$ we have
\begin{align}\label{eq:Tq_n,i}
T_{n,i}(x;q)\ =\ q^{\chi(i\ge n)}\Big( x\sum_{j=0}^{\left\lceil \tfrac{n-1}{n}i\right\rceil -1}T_{n-1,j}(x;q)+\sum_{j=\left\lceil \tfrac{n-1}{n}i\right\rceil }^{2n-3}T_{n-1,j}(x;q)\Big)\,,
\end{align}
with the initial conditions that $T_{2,0}(x;q)=1+q$, $T_{2,1}(x;q)=(1+q)x$, $T_{2,2}(x;q)=(q+q^2)x$, and $T_{2,3}(x;q)=(q+q^2)x^{2}$.
In particular, $T_{n,0}(x;q)\ = \ T_{n-1}(x;q)$.
\end{lem}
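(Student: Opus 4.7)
The plan is to prove the recurrence by a direct combinatorial decomposition of the inversion sequences indexed in $T_{n,i}(x;q)$. By Lemma \ref{q-trans} and the definition,
$$T_{n,i}(x;q) = \sum_{\e \in \I_n^{(2,4,\ldots)},\, e_n = i} q^{\exc \e} \, x^{\asc_D \e}.$$
Every such $\e$ is obtained uniquely by appending the entry $i$ to some $\e' = (e_1,\ldots,e_{n-1}) \in \I_{n-1}^{(2,4,\ldots)}$; the constraint $e_n < 2n$ is automatic since $0 \le i \le 2n-1$. So the task reduces to tracking how the two statistics $\exc$ and $\asc_D$ transform under this append operation, and then summing over the last entry $j = e_{n-1}$ of $\e'$.

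For the $q$-weight, the formula for $\exc$ gives $\exc \e = \exc \e' + \chi(i \ge n)$, which depends only on $i$; this produces the global factor $q^{\chi(i \ge n)}$ outside both sums. For the $x$-weight, I would use the assumption $n \ge 3$ to argue that the ``$0$-ascent'' condition in $\Asc_D$ involves only $e_1, e_2$ and therefore agrees for $\e$ and $\e'$; the ascents at positions $1,\ldots,n-2$ likewise involve only entries common to both sequences; and so the only possibly new ascent is at position $n-1$, which occurs iff $e_{n-1}/(n-1) < i/n$, contributing an extra factor of $x$ precisely in that case.

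Setting $j = e_{n-1}$, the inequality $j/(n-1) < i/n$ becomes $nj < (n-1)i$, and since $j$ is an integer this is equivalent to $j \le \lceil (n-1)i/n \rceil - 1$ (one must check this identity in the boundary case when $(n-1)i/n$ happens to be an integer, which is the only mildly subtle point). Splitting the sum over $\e'$ by whether $j$ satisfies this bound, and grouping $\e'$ according to $e_{n-1}=j$, recovers the right-hand side of the claimed recurrence. Finally, the ``in particular'' clause follows by plugging $i = 0$: the first sum is empty and the second covers $j = 0, \ldots, 2n-3$, yielding $T_{n,0}(x;q) = \sum_{j=0}^{2n-3} T_{n-1,j}(x;q) = T_{n-1}(x;q)$. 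The initial values $T_{2,i}(x;q)$ are checked by directly enumerating the four elements of $\I_2^{(2,4)}$ and computing their $(\exc,\asc_D)$-weights, which is straightforward. The only place requiring real care is the ceiling manipulation described above; the rest is bookkeeping.
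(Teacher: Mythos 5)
Your argument is correct and is essentially the paper's own proof: the paper likewise appends the last entry $i$ to a sequence in $\I_{n-1}^{(2,4,\dots)}$, notes $\exc\e=\exc(e_1,\dots,e_{n-1})+\chi(i\ge n)$ and that the only new ascent is $n-1\in\Asc_D\e$ iff $e_{n-1}<\tfrac{n-1}{n}i$, and then sets $i=0$ for the final clause (your explicit ceiling check $j<\tfrac{n-1}{n}i \Leftrightarrow j\le\lceil\tfrac{n-1}{n}i\rceil-1$ is the same step, left implicit in the paper). One trivial slip: $\I_{2}^{(2,4)}$ has eight elements, not four (each $T_{2,i}(x;q)$ sums over the two sequences with $e_2=i$), but the verification of the initial values is unaffected.
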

\begin{proof}
For the initial values, it is easy to verify.
For $\e=(e_1, \ldots, e_{n-1}, i) \in \I_{n}^{(2,4,6,\dotsc)}$, it is clear that
$$\exc \e= \exc (e_1, \ldots, e_{n-1}) + \chi(i\ge n)\,.$$
Moreover, we have that $n-1 \in \Asc _D \e$ if and only if
$$\frac{e_{n-1}}{n-1} < \frac{i}{n}\,,$$
that is, $e_{n-1} < \tfrac{n-1}{n}i.$
Taking $i=0$ in \eqref{eq:Tq_n,i}, it is readily to see that
$$T_{n,0}(x;q) = \sum_{j=0}^{2n-3}T_{n-1,j}(x;q) =  T_{n-1}(x;q).$$
This completes the proof.
\end{proof}

To show the real-rootedness of  $T_n(x;q)$, we further need to prove that the sequence $(T_{n,i}(x;q))_{i=0}^{2n-1}$ is mutually interlacing.
With the above recurrence relation, it is desirable to give an induction proof as done by Savage and Vistonai for the polynomials $T_{n,i}(x)$.
For the basis step of the induction, Savage and Vistonai showed that the sequence $(T_{4,i}(x))_{i=0}^{7}$ is mutually interlacing
by numerical calculations. It is hoped that for any positive $q$ the sequence $(T_{4,i}(x;q))_{i=0}^{7}$ is also mutually interlacing. In fact, this is true, as shown in Lemma \ref{lem:mutual}. However, due to the additional parameter $q$, we can not directly follow the way of Savage and Vistonai to verify the interlacing. To fix this, we shall use the Hermite--Biehler theorem and the Routh--Hurwitz criterion for stability of complex polynomials, as illustrated below.

The Hermite--Biehler theorem presents necessary and sufficient conditions for the stability of a polynomial in terms of certain interlacing conditions. Recall that a complex polynomial $p(z)$ is said to be {Hurwitz stable} (respectively, {weakly Hurwitz stable}) if $p(z)\neq0$ whenever $\Re(z)\geq0$ (respectively, $\Re(z)>0$), where $\Re(z)$ denotes the real part of $z$.
Suppose that
$p(z)=\sum_{k=0}^{n}a_{n-k}z^{k}.$
Let
\begin{align}\label{eq-even-odd}
p^{E}(z)=\sum_{k=0}^{\lfloor n/2\rfloor}a_{n-2k}z^{k}\quad\mbox{ and }\quad p^{O}(z)=\sum_{k=0}^{\lfloor(n-1)/2\rfloor}a_{n-1-2k}z^{k}.
\end{align}
The Hermite--Biehler theorem could be stated as follows, which establishes a connection between the interlacing property between $p^{E}(z)$ and $p^{O}(z)$ and the stability of $p(z)$.

\begin{thm}[{\cite[Theorem 4.1]{Branden2011Iterated}}]\label{Hermite--Biehler}
Let $p(z)$ be a polynomial with real coefficients, and let $p^{E}(z)$ and $p^{O}(z)$ be defined as in \eqref{eq-even-odd}. Suppose that $p^{E}(z)p^{O}(z)\not\equiv0$. Then $p(z)$
is Hurwitz stable (resp. weakly Hurwitz stable) if and only if $p^{E}(z)$ and $p^{O}(z)$ have
only negative (resp. nonpositive)
zeros, and moreover $p^{O}(z) \seps p^{E}(z)$ (resp. $p^{O}(z) \sep p^{E}(z)$).
\end{thm}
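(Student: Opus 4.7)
The plan is to analyze $p(z)$ along the imaginary axis and to read off the interlacing of zeros of $p^E$ and $p^O$ from the behavior of $\arg p(iy)$. The starting point is the decomposition
$$p(z) \ =\ p^E(z^2) + z\, p^O(z^2),$$
which follows directly from \eqref{eq-even-odd}. Specializing to $z=iy$ for $y\in\reals$ gives $\Re p(iy) = p^E(-y^2)$ and $\Im p(iy) = y\, p^O(-y^2)$, so that the negative real zeros of $p^E$ (resp.\ $p^O$) correspond bijectively, via $t = -y^2$, to the real zeros of $\Re p(iy)$ (resp.\ of $\Im p(iy)/y$).

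The first main step is the classical link between stability and monotonicity of the argument along the imaginary axis. Factoring $p(z) = c\prod_j(z-z_j)$ over $\mathbb{C}$, the angle $\arg(iy - z_j)$ is strictly increasing in $y\in\reals$ if and only if $\Re z_j<0$, and weakly increasing if and only if $\Re z_j\le 0$. Summing contributions and using that the non-real $z_j$ occur in conjugate pairs (since $p$ has real coefficients), one concludes that $p(z)$ is Hurwitz stable (resp.\ weakly Hurwitz stable) if and only if $\arg p(iy)$ is strictly (resp.\ weakly) monotonically increasing on $\reals$, with total increase $n\pi$.

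The second main step is to convert monotonicity of the argument into interlacing of zeros. A standard planar-topology argument shows that, provided $\Re p(iy)$ and $\Im p(iy)$ share no common real zero, $\arg p(iy)$ is strictly monotone on $\reals$ if and only if the real zeros of $\Re p(iy)$ and $\Im p(iy)$ strictly alternate. Transporting this alternation by the change of variables $t=-y^2$ and accounting for the factor $y$ in $\Im p(iy) = y\, p^O(-y^2)$, one obtains that the negative real zeros of $p^E(z)$ and $p^O(z)$ strictly interlace in the sense of \eqref{alt-def}--\eqref{int-def}, that is, $p^O(z)\seps p^E(z)$. The converse is obtained by reversing these implications. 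The weak-stability case runs in parallel, allowing common real zeros of $\Re p(iy)$ and $\Im p(iy)$, which correspond precisely to purely imaginary roots of $p(z)$ and produce equalities rather than strict inequalities in \eqref{alt-def}--\eqref{int-def}.

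The main obstacle is the careful parity/degree bookkeeping required to arrive at the correct variant of the interlacing relation. One must track: (i) whether $\deg p^E = \deg p^O$ or $\deg p^E = \deg p^O + 1$, which depends on the parity of $n$ and on whether the top coefficient $a_n$ is nonzero; (ii) the sign of the leading coefficients of $p^E(-y^2)$ and of $y\,p^O(-y^2)$, which determines the alternation pattern as $y\to\pm\infty$; and (iii) the extra zero at $y=0$ contributed by the factor $y$. Once this dictionary between zeros on the imaginary $y$-axis and zeros on the negative $t$-axis is established, the equivalence in the theorem becomes a direct transcription of the monotonicity of $\arg p(iy)$.
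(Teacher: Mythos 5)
The paper does not prove this theorem at all: it is quoted as a known result (Br\"and\'en's Theorem 4.1, i.e.\ the classical Hermite--Biehler theorem) and used as a black box, so there is no in-paper argument to compare yours against. Judged on its own, your outline follows the standard classical route: write $p(z)=p^{E}(z^2)+z\,p^{O}(z^2)$, restrict to the imaginary axis, and relate stability to the monotone increase of $\arg p(iy)$. The decomposition, the identification $\Re p(iy)=p^{E}(-y^2)$, $\Im p(iy)=y\,p^{O}(-y^2)$, and the first step (stability $\Leftrightarrow$ strictly increasing phase, via $\frac{d}{dy}\arg(iy-z_j)=-\Re z_j/|iy-z_j|^2$) are all correct and are exactly how the necessity direction is usually done.

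The genuine gap is in your second step and hence in the sufficiency direction. The claimed equivalence ``$\arg p(iy)$ is strictly monotone if and only if the real zeros of $\Re p(iy)$ and $\Im p(iy)$ strictly alternate'' is false as stated: alternation carries no orientation information. For example, $p(z)=z^2+z+1$ and $p(z)=z^2-z+1$ produce exactly the same alternating zero pattern ($\Re p(iy)=1-y^2$ vanishing at $\pm1$, $\Im p(iy)=\pm y$ vanishing at $0$), yet the phase increases for the first and decreases for the second, and only the first is Hurwitz stable. So ``reversing these implications'' does not prove that negative, interlacing zeros of $p^{E}$ and $p^{O}$ force stability; the sign data you defer to bookkeeping (positivity of the leading coefficients, which is built into the definition of $\seps$ in this paper, or equivalently the signs of one part at the zeros of the other) is precisely what separates the stable from the anti-stable case and must enter the argument. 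The standard ways to close this are either (a) use the interlacing plus sign conditions to count the total phase increase along the imaginary axis as exactly $n\pi$ and then apply the argument principle on a large half-disc, or (b) prove sufficiency directly by the positive-pair argument: expand $p^{O}(w)/p^{E}(w)$ in partial fractions with nonnegative coefficients (this is where interlacing with negative zeros is used) and deduce $\Re\bigl(z\,p^{O}(z^2)/p^{E}(z^2)\bigr)>0$ for $\Re z>0$, so that $p(z)=p^{E}(z^2)\bigl(1+z\,p^{O}(z^2)/p^{E}(z^2)\bigr)$ cannot vanish there. With either repair, and with the parity/degree checks you list carried out, the proof is complete; as written, the converse direction is not.
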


Therefore, to prove the mutual interlacing of the sequence $(T_{4,i}(x;q))_{i=0}^{7}$ for positive $q$, it suffices to show that the polynomial
$$zT_{4,i}(z^2;q)+T_{4,j}(z^2;q)$$
is weakly Hurwitz stable for any $0\leq i<j\leq 7$. A useful criterion for determining stability was given by Hurwitz \cite{Hurwitz1895Ueber}, which we shall explain below.
Given a polynomial $p(z)=\sum_{k=0}^{n}a_{n-k}z^{k}$, for any $1\leq k\leq n$ let
\begin{align*}
  \Delta_k(p)= \det \left(
  \begin{array}{ccccc}
   a_1 & a_3 & a_5 & \dots & a_{2k-1}\\
   a_0 & a_2 & a_4 & \dots & a_{2k-2}\\
    0  & a_1 & a_3 & \dots & a_{2k-3}\\
    0  & a_0 & a_2 & \dots & a_{2k-r}\\
  \dots&\dots&\dots& \dots & \dots\\
    0  &  0  &  0  & \dots & a_{k}\\
  \end{array}\right)_{k\times k}.
\end{align*}
These determinants are known as the Hurwitz determinants of $p(z)$.
Hurwitz showed that the stability of $p(z)$ is uniquely determined by the signs of $\Delta_k(p)$.

\begin{thm}[{\cite{Hurwitz1895Ueber}}]\label{Hurwitz criterion}
Suppose that $p(z)=\sum_{k=0}^{n}a_{n-k}z^{k}$ is a real polynomial with $a_0>0$. Then
$p(z)$ is Hurwitz stable if and only if the corresponding Hurwitz determinants $\Delta_k(p)>0$ for any $1\leq k\leq n$.
\end{thm}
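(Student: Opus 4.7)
The plan is to combine the Hermite--Biehler theorem (Theorem~\ref{Hermite--Biehler} above) with the classical theory of Stieltjes continued fractions and the Euclidean algorithm on polynomials. First I would decompose $p(z) = p^E(z^2) + z\,p^O(z^2)$ so that Theorem~\ref{Hermite--Biehler} converts Hurwitz stability of $p$ into the two conditions that $p^E(x)$ and $p^O(x)$ have only negative real zeros and that $p^O \seps p^E$. Thus the proof reduces to showing that these interlacing-plus-negativity conditions are encoded precisely by positivity of all the Hurwitz determinants $\Delta_k(p)$.

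Next I would invoke the classical dictionary between interlacing pairs of real-rooted polynomials with positive leading coefficients and positive Stieltjes ($J$-)continued fractions: under the degree hypotheses at hand, $p^O \seps p^E$ with both polynomials having only negative zeros if and only if the rational function $p^E(x)/p^O(x)$ admits a terminating expansion
\[
\frac{p^E(x)}{p^O(x)} \;=\; \gamma_1 + \cfrac{1}{\gamma_2 x + \cfrac{1}{\gamma_3 + \cfrac{1}{\gamma_4 x + \cdots}}}
\]
with all $\gamma_i > 0$. This is the Stieltjes--Hermite characterization of positive-definite moment functionals on the half-line, applied to the orthogonal polynomial sequence generated by $p^E$ and $p^O$.

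The coefficients $\gamma_i$ are computed by successive Euclidean division of $p^E$ by $p^O$, which is precisely the Routh array reduction. At each step the leading coefficient of the remainder is a ratio of determinants of submatrices of the Sylvester-type array built from the coefficients of $p^E$ and $p^O$; after a standard permutation of rows, that array is the Hurwitz matrix of $p$. Repeated application of Sylvester's bordered-minor identity then yields $\gamma_k = \Delta_k(p)/\Delta_{k-1}(p)$, up to an explicit positive factor coming from $a_0$, with the convention $\Delta_0 = 1$. Consequently the condition $\gamma_i > 0$ for all $i$ is equivalent to $\Delta_k(p) > 0$ for all $k$, closing the chain of equivalences.

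The main obstacle is this last determinantal identification. Everything else is either the Hermite--Biehler reduction (already available to us) or the classical continued-fraction/orthogonal-polynomial dictionary, but matching each step of the Euclidean division with a specific minor of the Hurwitz matrix requires careful linear-algebraic bookkeeping via Sylvester/Jacobi identities for bordered minors and a nontrivial tracking of the row permutation that sends the Sylvester-style array of $p^E, p^O$ into the Hurwitz matrix of $p$. Once this dictionary is established, the theorem follows by combining the three equivalences in a line.
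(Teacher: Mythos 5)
A preliminary point of comparison: the paper does not prove this statement at all --- it is imported as Hurwitz's classical criterion, with \cite{Hurwitz1895Ueber} (and the historical remarks in Rahman--Schmeisser) as the source --- so your proposal is measured against the classical literature rather than against an argument in the paper. The route you choose, namely decomposing $p(z)=p^{E}(z^{2})+z\,p^{O}(z^{2})$, applying Theorem~\ref{Hermite--Biehler} to reduce Hurwitz stability to the condition that $p^{E}$ and $p^{O}$ have only negative zeros with $p^{O}(z)\seps p^{E}(z)$, encoding this by a Stieltjes continued fraction with positive partial quotients, and identifying those quotients with ratios of Hurwitz minors through the Routh reduction, is exactly the standard classical proof (essentially Hurwitz's own, as presented in Gantmacher or in Rahman--Schmeisser), and it does work.

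However, as written your argument is a plan rather than a proof at precisely the point where the content of the theorem lies. Once Hermite--Biehler is granted, the statement \emph{is} the equivalence between ``interlacing with only negative zeros'' and ``all $\Delta_{k}(p)>0$'', and both of your middle steps --- that this interlacing condition holds if and only if $p^{E}/p^{O}$ has an S-fraction with all $\gamma_{i}>0$, and that $\gamma_{k}=\Delta_{k}(p)/\Delta_{k-1}(p)$ up to explicit positive factors --- are invoked rather than established; you yourself label the second as the main obstacle. To close the argument you would need to run the Euclidean/Routh reduction explicitly and prove, say by induction using Sylvester's identity on bordered minors, that the pivots of the Routh scheme are quotients of consecutive Hurwitz determinants; you would also need to fix the parity bookkeeping (for $n$ even one has $\deg p^{E}=\deg p^{O}+1$ and the fraction must begin with a term linear in $x$ rather than the constant $\gamma_{1}$ you wrote, while for $n$ odd the degrees are equal), and in the ``if'' direction to check that positivity of all $\Delta_{k}$ forces every pivot to be nonzero, so the reduction never degenerates, while in the ``only if'' direction the strict interlacing supplies this. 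None of these is a fatal obstruction --- this is how the classical proofs proceed --- but they constitute the substance of the theorem, so your proposal should be regarded as a correct outline whose central determinantal lemma remains to be proven.
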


The above result is
usually called the Routh-Hurwitz stability criterion since it is
equivalent to the Routh test, for more historical background see \cite[pp. 393]{Rahman2002Analytic}. With this criterion, we are able to prove the mutually interlacing properpty of $(T_{4,i}(x;q))_{i=0}^{7}$.

\begin{lem}\label{lem:mutual}
For any positive $q$, the sequence $(T_{4,i}(x;q))_{i=0}^{7}$
is mutually interlacing.
\end{lem}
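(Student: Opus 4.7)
The plan is to reduce the mutual interlacing question, pair by pair, to a Hurwitz stability question via the Hermite--Biehler theorem, and then settle each resulting stability question using the Routh--Hurwitz criterion. The first step is to iterate the recurrence of Lemma \ref{q-T-recurrence} starting from the explicit initial data given there: one application yields the six polynomials $T_{3,j}(x;q)$ for $0 \le j \le 5$, and a second application produces all eight polynomials $T_{4,i}(x;q)$ for $0 \le i \le 7$ explicitly, as polynomials in $x$ of small degree whose coefficients lie in $\mathbb{R}_{\ge 0}[q]$.

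For each pair $0 \le i < j \le 7$ I would then form the companion polynomial
\[
p_{i,j}(z;q) \;:=\; z\,T_{4,i}(z^{2};q) + T_{4,j}(z^{2};q).
\]
A direct inspection shows that, in the notation of \eqref{eq-even-odd}, its ``even'' and ``odd'' parts are $p_{i,j}^{E}(z) = T_{4,j}(z;q)$ and $p_{i,j}^{O}(z) = T_{4,i}(z;q)$. Hence by Theorem \ref{Hermite--Biehler}, the desired interlacing $T_{4,i}(x;q) \sep T_{4,j}(x;q)$, together with the fact that $T_{4,i}$ and $T_{4,j}$ have only nonpositive zeros, is equivalent to $p_{i,j}(z;q)$ being weakly Hurwitz stable, which in turn is implied by strict Hurwitz stability. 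By Theorem \ref{Hurwitz criterion}, Hurwitz stability reduces to checking that every Hurwitz determinant $\Delta_{k}(p_{i,j})$ is strictly positive for each $q > 0$.

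The remaining task is therefore purely computational: for each of the $\binom{8}{2} = 28$ pairs, and each relevant $k \le \deg p_{i,j}$, expand $\Delta_{k}(p_{i,j})$ symbolically and verify that, as a polynomial in $q$, it has nonnegative (and not identically zero) coefficients, which then forces strict positivity on $q > 0$. The main obstacle is the sheer bulk of this verification---on the order of a couple of hundred symbolic positivity tests in $q$---which is best handled with a computer algebra system; there is no conceptual difficulty beyond assembling Hermite--Biehler and Routh--Hurwitz correctly. The substantive content of the lemma is then the empirical observation that the required coefficient nonnegativity persists uniformly across all $28$ pairs, allowing the additional parameter $q$ to be accommodated where Savage and Visontai's original numerical check for $T_{4,i}(x)$ did not suffice.
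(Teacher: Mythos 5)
Your overall strategy --- compute the eight polynomials from Lemma \ref{q-T-recurrence}, reduce each pairwise interlacing to weak Hurwitz stability of $p_{i,j}(z;q)=z\,T_{4,i}(z^2;q)+T_{4,j}(z^2;q)$ via Theorem \ref{Hermite--Biehler}, and certify stability with Theorem \ref{Hurwitz criterion} --- is exactly the paper's, but two steps of your verification plan fail as stated. First, for every pair $i<j$ (so $j\ge 1$) the polynomial $T_{4,j}(x;q)$ is divisible by $x$, hence $p_{i,j}(0;q)=0$. A polynomial with a zero at the origin is never Hurwitz stable, so its Hurwitz determinants cannot all be strictly positive (the last one is identically zero), and Theorem \ref{Hurwitz criterion}, which characterizes strict stability, yields no conclusion. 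You must first divide out the maximal power of $z$ (and, harmlessly, the factor $q+1$), as the paper does in forming $C_{i,j}(z)$, and prove strict stability of that quotient; this suffices because zeros at the origin do not affect weak Hurwitz stability.

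Second, your concluding test --- that each Hurwitz determinant, expanded in $q$, has nonnegative and not identically zero coefficients --- is empirically false, and not merely for bookkeeping reasons. Even after stripping, the determinants for the pairs $(0,1)$, $(0,6)$, $(1,6)$ contain the factor $(q^2-1)^2$, so they vanish at $q=1$: strict stability genuinely fails there, and the case $q=1$ must be treated separately (the paper invokes Savage and Visontai's result for $T_{4,i}(x)=T_{4,i}(x;1)$; alternatively one could pass to the limit $q\to 1$, using that non-strict interlacing is preserved under limits with no drop in degree, but some such argument is needed). Moreover, for $(i,j)=(0,6)$ one finds $\Delta_4=2q^3(q+1)^2(6q^5+12q^4-12q^3-11q^2+10q+7)$, whose coefficients are not all nonnegative, so its positivity on $q>0$ requires a separate (numerical or analytic) argument rather than coefficientwise nonnegativity. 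Finally, checking all $28$ pairs is fine but wasteful: $T_{4,4}(x;q)=qT_{4,3}(x;q)$ and $T_{4,7}(x;q)=qxT_{4,0}(x;q)$ reduce the work to the $15$ pairs with $i,j\in\{0,1,2,3,5,6\}$, which is only a convenience. So the skeleton is correct, but the computation as you describe it would not go through without these repairs.
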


\begin{proof}
By Lemma \ref{q-T-recurrence}, it is easy to compute that
\begin{align*}
\begin{tabular}{ll}
\ensuremath{T_{4,0}(x;q)=}  \ensuremath{ (q+1) \left(q^2 x^3+(4 q^2+6 q+1) x^2+(q^2+6 q+4) x+1\right)},      &     \\[5pt]
\ensuremath{T_{4,1}(x;q)=}   \ensuremath{(q+1) \left((q^2+q) x^3+(4 q^2+6 q+2) x^2+(q^2+5 q+4) x\right)},        &    \\[5pt]
\ensuremath{T_{4,2}(x;q)=}   \ensuremath{(q+1) \left((q^2+2 q) x^3+(4 q^2+6 q+4) x^2+(q^2+4 q+2) x\right)},        &    \\[5pt]
\ensuremath{T_{4,3}(x;q)=}   \ensuremath{(q+1) \left((q^2+3 q+1) x^3+(4 q^2+6 q+4) x^2+(q^2+3 q+1) x\right)}, &    \\[5pt]
\ensuremath{T_{4,4}(x;q)=}   \ensuremath{(q+1) \left(q (q^2+3 q+1) x^3+q (4 q^2+6 q+4) x^2+q (q^2+3 q+1) x\right)}, &    \\[5pt]
\ensuremath{T_{4,5}(x;q)=}   \ensuremath{(q+1) \left(q (2 q^2+4 q+1) x^3+q (4 q^2+6 q+4) x^2+q (2 q+1) x\right)},  &    \\[5pt]
\ensuremath{T_{4,6}(x;q)=}   \ensuremath{(q+1) \left(q (4 q^2+5 q+1) x^3+q (2 q^2+6 q+4) x^2+q (q+1) x \right)},  &    \\[5pt]
\ensuremath{T_{4,7}(x;q)=}   \ensuremath{(q+1) \left(q^3 x^4+q (4 q^2+6 q+1) x^3+q (q^2+6 q+4) x^2+q x\right)}. &
\end{tabular}
\end{align*}
Note that
$$T_{4,4}(x;q)=qT_{4,3}(x;q) \quad \mbox{and} \quad T_{4,7}(x;q)=qxT_{4,0}(x;q).$$
Thus, it suffices to show that
$T_{4,i}(x;q)\sep T_{4,j}(x;q)$
for any $q>0$  and $i<j$ with $i, j \in \{0,1,2,3,5,6\}$.
For the case of $q=1$, the interlacing property has already been obtained by
Savage and Visontai \cite{Savage$s$}. In the following we shall assume that $q\neq 1$.

By Theorem \ref{Hermite--Biehler}, we only need to prove that
$$T_{4,j}(z^2;q) + z\,T_{4,i}(z^2;q)$$ is weakly Hurwitz stable for any positive $q\neq 1$ and  $i<j$ with $i, j \in \{0,1,2,3,5,6\}$. In fact, all these polynomials are Hurwitz stable up to a power of $z$. Let
\begin{align*}
 C_{i,j}(z)=\frac{T_{4,j}(z^2;q) + z\,T_{4,i}(z^2;q)}{z^{m_{i,j}} (q+1)},
\end{align*}
where $m_{i,j}$ is the largest nonnegative integer $k$ such that
$$z^k\,|\,(T_{4,j}(z^2;q) + z T_{4,i}(z^2;q)).$$

We proceed to show that $C_{i,j}(z)$ is Hurwitz stable for any $i<j$ with $i,j \in \{0,1,2,3,5,6\}$. By Theorem \ref{Hurwitz criterion}, we only need to show that all the Hurwitz determinants of $C_{i,j}(z)$ are positive.
It is easy to compute these Hurwitz determinants with the aid of a computer.
We would like to mention that most of these determinants are nonzero polynomials of $q$ with only nonnegative coefficients except for $(i,j)\in \{(0,1), (0,6),(1,6)\}$. Therefore, if $i<j$ and $(i,j)\not\in \{(0,1), (0,6),(1,6)\}$, then the corresponding Hurwitz determinants of $C_{i,j}(z)$ must be positive for any positive $q$, hence establishing its Hurwitz stability. In the following, we shall separately check the sign of the Hurwitz determinants of $C_{i,j}(z)$ for $(i,j)\in \{(0,1), (0,6),(1,6)\}$.

For $(i,j)= (0,1)$, the testing polynomial is
\begin{align*}
 C_{0,1}(z)  = & \,q^2 z^6+(q^2+q) z^5+(4 q^2+6 q+1) z^4+(4 q^2+6 q+2) z^3\\
               & +(q^2+6 q+4) z^2+(q^2+5 q+4) z+1,
\end{align*}
and the corresponding Hurwitz determinants are
$$
\begin{array}{ll}
\Delta_1 = q (q+1), & \Delta_2 = q (4 q^2+5 q+1),\\[5pt]
\Delta_3  = 2 q (q+1)^2 (7 q^2+4 q+1), & \Delta_4 = 4 q (q+1)^2 (3 q^3+q^2+q+1),\\[5pt]
\Delta_5  = 12 q (q+1)^3 (q^2-1)^2, &
\Delta_6 = 12 q (q+1)^3 (q^2-1)^2.
\end{array}
$$
It is obvious that they are all positive for any positive $q\neq 1$ .
This means that $C_{0,1}(z)$ is Hurwitz stable for any positive $q\neq 1$.

For  $(i,j)=(0,6)$, the testing polynomial is
  \begin{align*}
 C_{0,6}(z)  = & \,q^2 z^6+(4 q^3+5 q^2+q) z^5+(4 q^2+6 q+1) z^4+(2 q^3+6 q^2+4 q) z^3\\
               & +(q^2+6 q+4) z^2+(q^2+q) z+1,
\end{align*}
and the corresponding Hurwitz determinants are
$$
\begin{array}{ll}
\Delta_1 = q (4 q^2+5 q+1),  \qquad \quad\ \Delta_2 = q (14 q^4+38 q^3+34 q^2+11 q+1), \\[5pt]
\Delta_3 = 4 q^3 (q+1)^2 (3 q^3+q^2+q+1), \\[5pt]
\Delta_4 = 2 q^3 (q+1)^2 (6 q^5+12 q^4-12 q^3-11 q^2+10 q+7), \\[5pt]
\Delta_5 = 12 q^5 (q+1)^3 (q^2-1)^2,\quad
\Delta_6 = 12 q^5 (q+1)^3 (q^2-1)^2.
\end{array}
$$
For any positive $q\neq 1$, it is clear that all Hurwitz determinants is positive except for $\Delta_4$. For any $q>0$, we can verify that
$\Delta_4>0$ by numerical analysis with the aid of a computer. Thus, $C_{0,6}(z)$ is Hurwitz stable for any positive $q\neq 1$.

For $(i,j)=(1,6)$, the testing polynomial is
 \begin{align*}
 C_{1,6}(z)  = & \,(q+1) q z^5+(q+1) (4 q^2+q) z^4+(q+1) (4 q+2) z^3\\
               & +(q+1) (2 q^2+4 q) z^2+(q+1) (q+4) z+(q+1) q,
\end{align*}
and the corresponding Hurwitz determinants
$$
\begin{array}{ll}
\Delta_1 = q (4 q^2+5 q+1), & \Delta_2 = 2 q (q+1)^2 (7 q^2+4 q+1), \\[5pt]
\Delta_3 = 4 q^2 (q+1)^3 (3 q^3+q^2+q+1), &
\Delta_4 = 12 q^2 (q+1)^4 (q^2-1)^2, \\[5pt]
\Delta_5 = 12 q^3 (q+1)^5 (q^2-1)^2 &
\end{array}
$$
are positive for any positive $q\neq 1$. Thus, $C_{1,6}(z)$ is Hurwitz stable for any positive $q\neq 1$.

Combining the above cases, we get the stability of $C_{i,j}(z)$ for any
$i<j$ and $i,j \in \{0,1,2,3,5,6\}$, which implies the mutual interlacing of
the sequence $(T_{4,i}(x;q))_{i=0}^{7}$. This completes the proof.
\end{proof}

Now we can prove the mutual interlacing of $(T_{n,i}(x;q))_{i=0}^{2n-1}$ for general $n$.

\begin{prop}\label{q-T-interlacing}
For $n\ge 4$ and any positive $q$, the sequence of polynomials $(T_{n,i}(x;q))_{i=0}^{2n-1}$ is mutually interlacing.
\end{prop}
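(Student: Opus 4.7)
The plan is to prove this by induction on $n$, with the base case $n = 4$ already handled by Lemma \ref{lem:mutual}. For the inductive step, the key observation is that the recurrence \eqref{eq:Tq_n,i} is, up to a positive rescaling, an instance of the transformation \eqref{comp-pre-tran}, so Theorem \ref{thm:interlacing} will do nearly all of the work.

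More precisely, set $t_i = \lceil \frac{n-1}{n} i \rceil$ for $0 \le i \le 2n-1$, and define
$$g_i(x) = x \sum_{j=0}^{t_i - 1} T_{n-1,j}(x;q) + \sum_{j = t_i}^{2n-3} T_{n-1,j}(x;q),$$
so that \eqref{eq:Tq_n,i} reads $T_{n,i}(x;q) = q^{\chi(i \ge n)} g_i(x)$. One checks directly that the sequence $(t_i)_{i=0}^{2n-1}$ is nondecreasing with $t_0 = 0$ and $t_{2n-1} = 2n-2$, so the indexing hypotheses of Theorem \ref{thm:interlacing} are met (with $m = 2n-2$ and $m' = 2n$, after a trivial shift of indices). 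By the inductive hypothesis, $(T_{n-1,j}(x;q))_{j=0}^{2n-3}$ is mutually interlacing, and each $T_{n-1,j}(x;q)$ has nonnegative coefficients (immediate from its combinatorial definition). Applying Theorem \ref{thm:interlacing} yields the mutual interlacing of $(g_i(x))_{i=0}^{2n-1}$.

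It remains to observe that the relation $\sep$ depends only on the zeros of the polynomials and the signs of their leading coefficients, and is therefore invariant under multiplication by any positive constant. Since $q^{\chi(i \ge n)}$ equals either $1$ or $q > 0$, the pairwise relations $g_i(x) \sep g_j(x)$ for $i < j$ carry over to $T_{n,i}(x;q) \sep T_{n,j}(x;q)$, completing the induction. The only real work in this argument is the base case $n = 4$, which is the main obstacle and has already been dealt with in Lemma \ref{lem:mutual}: there the presence of the extra parameter $q$ forces one to reduce pairwise interlacing to the Hurwitz stability of an associated univariate polynomial via the Hermite--Biehler theorem, and verify that stability with the Routh--Hurwitz criterion.
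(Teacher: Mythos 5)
Your proposal is correct and follows essentially the same route as the paper: induction on $n$ with base case Lemma \ref{lem:mutual}, factoring out the positive constant $q^{\chi(i\ge n)}$ (the paper works with $q^{-\chi(i\ge n)}T_{n,i}(x;q)$), and applying Theorem \ref{thm:interlacing} to the recurrence \eqref{eq:Tq_n,i}. Your write-up just makes explicit the index bookkeeping and the invariance of interlacing under positive scaling, which the paper leaves implicit.
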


\begin{proof}
We use induction on $n$. When $n = 4$, the statement is true by Lemma \ref{lem:mutual}.
Note that $(T_{n,i}(x;q))_{i=0}^{2n-1}$ is mutually interlacing if and only if $(q^{-\chi(i\ge n)} T_{n,i}(x;q))_{i=0}^{2n-1}$ is mutually interlacing.
So, it suffices to prove that the sequence of polynomials
$$\left(q^{-\chi(i\ge n)}T_{n,i}(x;q)\right)_{i=0}^{2n-1}$$
is mutually interlacing.
By the recurrence \eqref{eq:Tq_n,i}, it is easy to see that the polynomials $q^{-\chi(i\ge n)} T_{n,i}(x;q)$ satisfy the conditions required in Theorem \ref{thm:interlacing}. By induction, the desired result immediately follows. This completes the proof.
\end{proof}

The main result of this section is as follows, which gives a positive answer to Conjecture \ref{conj:q-type-D}.

\begin{thm}
\label{q-invseqD} For $n\ge2$ and any positive $q$, the polynomial $D_n(x;q)$ has only real zeros.
\end{thm}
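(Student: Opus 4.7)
The plan is to deduce the theorem directly from the machinery already in place. Since Brenti's identity gives $T_n(x;q) = (1+q)\,D_n(x;q)$ and $1+q>0$ for $q>0$, the real-rootedness of $D_n(x;q)$ is equivalent to that of $T_n(x;q) = \sum_{i=0}^{2n-1} T_{n,i}(x;q)$. So the whole problem reduces to showing that this sum is real-rooted.

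For $n\ge 4$, I would invoke Proposition \ref{q-T-interlacing}, which says that $(T_{n,i}(x;q))_{i=0}^{2n-1}$ is mutually interlacing for every positive $q$. Lemma \ref{mutual-inter-cor}, applied with $i=0$ and $j=2n-1$, then asserts that the full partial sum $\sum_{i=0}^{2n-1} T_{n,i}(x;q)$ has only real zeros. Dividing by $1+q$ yields the real-rootedness of $D_n(x;q)$, as required.

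For the two remaining small cases I would verify real-rootedness by direct computation. When $n=2$, summing the initial values from Lemma \ref{q-T-recurrence} gives $T_2(x;q) = (1+q)\bigl(1+(1+q)x+qx^2\bigr) = (1+q)(1+x)(1+qx)$, which is manifestly real-rooted for every positive $q$. When $n=3$, applying the recurrence \eqref{eq:Tq_n,i} once to these base values produces the six refined polynomials $T_{3,i}(x;q)$, whose sum $T_3(x;q)$ can be written down explicitly; one then either factors it directly or, more robustly, checks the mutual interlacing of $(T_{3,i}(x;q))_{i=0}^{5}$ for positive $q$ by the same Hermite--Biehler / Routh--Hurwitz technique that proves Lemma \ref{lem:mutual}, and invokes Lemma \ref{mutual-inter-cor} again.

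There is no substantive obstacle at the level of this theorem itself; the genuine work was already performed in establishing Proposition \ref{q-T-interlacing}, and in particular in its base case Lemma \ref{lem:mutual}, where the extra parameter $q$ forced the use of stability theory in place of direct numerical interlacing checks. What remains here is essentially a clean assembly of the previous results together with an elementary verification of the two smallest cases.
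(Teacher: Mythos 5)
Your proposal is correct in substance and rests on exactly the same foundation as the paper, namely the reduction $T_n(x;q)=(1+q)D_n(x;q)$ and the mutual interlacing of $(T_{n,i}(x;q))_{i=0}^{2n-1}$ from Proposition \ref{q-T-interlacing}; the only difference is in how the last step is assembled. You sum the whole level-$n$ interlacing sequence via Lemma \ref{mutual-inter-cor}, which indeed gives $T_n(x;q)$ real-rooted for $n\ge 4$, but this leaves $n=3$ as a separate case for which you only sketch a verification (factoring $T_3(x;q)$ or re-running the Routh--Hurwitz analysis on the six polynomials $T_{3,i}(x;q)$) without carrying it out. The paper avoids this entirely by using the identity $T_{n,0}(x;q)=T_{n-1}(x;q)$ recorded in Lemma \ref{q-T-recurrence}: since mutual interlacing of the level-$n$ sequence forces each member, in particular $T_{n,0}(x;q)$, to be real-rooted, one gets $T_{n-1}(x;q)$ real-rooted for all $n-1\ge 3$ with no extra computation, and only $n=2$ needs the explicit factorization $(1+q)(1+x)(1+qx)$. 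You can close your $n=3$ case at zero cost the same way, by observing $T_3(x;q)=T_{4,0}(x;q)$, which is real-rooted already by Lemma \ref{lem:mutual}; as written, your extra $n=3$ work is unnecessary and, being only sketched, is the one mildly incomplete point in an otherwise sound argument.
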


\begin{proof}
As mentioned at the beginning of this section, the real-rootedness of $D_{n}(x;q)$ is equivalent to that of $T_{n}(x;q)$. We shall prove that
for $n\ge2$ and $q>0$ the polynomial $T_n(x;q)$ has only real zeros.
This is true for $n=2$, since
$T_2(x;q)=(1+q)(1+x)(1+qx)$.
By Proposition \ref{q-T-interlacing}, we know that $(T_{n,i}(x;q))_{i=0}^{2n-1}$ is mutually interlacing for $n\geq 4$ and $q>0$. This also implies that $T_{n,0}(x;q)$ is real-rooted for any $n\geq 4$ and positive $q$.  Then by the equality $T_{n,0}(x;q)=T_{n-1}(x;q)$ in Lemma \ref{q-T-recurrence}, we obtain the desired result for $n\ge 3$.
This completes the proof.
\end{proof}

\section{The affine Eulerian polynomials of type \texorpdfstring{$D$}{Lg}}\label{section-affine-Eulerian}

The aim of this section is to prove the real-rootedness of the affine Eulerian polynomials $\widetilde{D}_n(x)$, which was conjectured by Dilks, Petersen, and Stembridge. The key ingredient of this section is to give a family of polynomial associated with $\widetilde{D}_n(x)$, with which we can give a proof of Dilks, Petersen, and Stembridge' conjecture as we have done for the $q$-Eulerian polynomials.

Our proof is based on an expression of $\widetilde{D}_n(x)$ in terms of the ascent statistic over inversion sequences, which was given by Savage and Visontai \cite{Savage$s$}. By using (2), (3) and (4) of Lemma \ref{bijection}, they showed that
\begin{align*}
2 \widetilde{D}_n(x)\ =\
\sum_{\e \in \I_{n}^{(2,4,\dots)} }x^{\widetilde{\asc}_{D}e},
\end{align*}
where the type $D$ affine ascent statistic for $\e\in \I_{n}^{(2,4,6,\dots)}$
is defined as
\begin{align}\label{def:affine-asc-D}
\widetilde{\asc}_{D}\e \ =\  {\asc}_{D}\e +\chi \big(\frac{e_{n-1}}{n-1}+\frac{e_n}{n}<\frac{2n-1}{n}\big).
\end{align}
Let $\widetilde{T}_n(x)= 2 \widetilde{D}_n(x)$.
It is natural to consider a refinement of the polynomial $\widetilde{T}_n(x)$, which is similar to $T_{n,i}(x)$:
\begin{align*}
\widetilde{T}_{n,i}(x)\ =\ \sum_{\e\in\I_{n}^{(2,4,\dots)}}\chi(e_{n}=i)\, x^{\widetilde{\asc}_{D}\e}\,.
\end{align*}
It is clear that
$$\widetilde{T}_n(x) = \sum_{i=0}^{2n-1}\widetilde{T}_{n,i}(x).$$
Unfortunately, this refinement is not an ideal one, because the polynomials
$\widetilde{T}_{n,i}(x)$ do not have a recurrence relation as that satisfied by $T_{n,i}(x)$.
However, $\widetilde{T}_{n,i}(x)$ still play a role in our proof, since they can be expressed in terms of $T_{n-1,i}(x)$ as follows.

\begin{lem}
\label{lem:S-recurrence}For $n\geq3$ and $0\le i \le n-1,$ we have
\begin{align}\label{eq:widetilde-T-recurrence}
\widetilde{T}_{n,i}(x)\ =\ x^{2}\,\sum_{j=0}^{i-1}T_{n-1,j}(x)+x\sum_{j=i}^{2n-i-3}T_{n-1,j}(x)+\sum_{j=2n-i-2}^{2n-3}T_{n-1,j}(x)\,,
\end{align}
and
\begin{align}\label{eq:widetilde-T-dual}
\widetilde{T}_{n,2n-i-1}(x) = \widetilde{T}_{n,i}(x)\,.
\end{align}
\end{lem}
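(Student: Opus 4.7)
The plan is to treat the two claims independently. For (\ref{eq:widetilde-T-recurrence}) I will expand the defining sum for $\widetilde{T}_{n,i}(x)$ by stripping off the last coordinate $e_n=i$ and grouping the remaining inversion sequence by $e_{n-1}=j$; this will turn the exponent on $x$ into a function of $j$ alone, splitting the sum into three pieces that match the right-hand side. For (\ref{eq:widetilde-T-dual}) I will instead produce a weight-preserving involution on $\I_n^{(2,4,\dots)}$ that flips only the last coordinate via $e_n\mapsto 2n-1-e_n$.

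For (\ref{eq:widetilde-T-recurrence}), fix $e_n=i$ and consult (\ref{def:asc-D}) and (\ref{def:affine-asc-D}). Removing the last entry affects only the comparison at position $n-1$ inside $\asc_D$ and the affine adjustment, so
\[
\widetilde{\asc}_D(e_1,\dots,e_{n-1},i) \,=\, \asc_D(e_1,\dots,e_{n-1}) + \chi\!\left(\tfrac{e_{n-1}}{n-1}<\tfrac{i}{n}\right) + \chi\!\left(\tfrac{e_{n-1}}{n-1}<\tfrac{2n-1-i}{n}\right),
\]
once the affine indicator $\chi(e_{n-1}/(n-1)+i/n<(2n-1)/n)$ is rearranged into the final term. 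Summing over prefixes with $e_{n-1}=j$ then yields
\[
\widetilde{T}_{n,i}(x) \,=\, \sum_{j=0}^{2n-3} T_{n-1,j}(x)\cdot x^{\chi(j/(n-1)<i/n)+\chi(j/(n-1)<(2n-1-i)/n)}.
\]
A short ceiling computation, which I expect to be the most tedious step, will show that for $0\le i\le n-1$ the two indicators switch off exactly at $j=i$ and $j=2n-i-2$, via the identities $\lceil(n-1)i/n\rceil = i$ and $\lceil(n-1)(2n-1-i)/n\rceil = 2n-i-2$. Since $i\le 2n-i-2$ throughout this range, the three intervals $[0,i-1]$, $[i,2n-i-3]$, $[2n-i-2,2n-3]$ partition the summation range and carry exponents $2$, $1$, and $0$ respectively, yielding (\ref{eq:widetilde-T-recurrence}).

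For (\ref{eq:widetilde-T-dual}), define $\iota:\I_n^{(2,4,\dots)}\to\I_n^{(2,4,\dots)}$ by $\iota(e_1,\dots,e_{n-1},e_n)=(e_1,\dots,e_{n-1},2n-1-e_n)$. This is an involution, since $e_n\in\{0,1,\dots,2n-1\}$, and it restricts to a bijection from $\{\e:e_n=i\}$ to $\{\e:e_n=2n-1-i\}$; hence it suffices to verify $\widetilde{\asc}_D\iota(\e)=\widetilde{\asc}_D\e$. The only $e_n$-dependent pieces of $\widetilde{\asc}_D$ are the comparison $\chi(e_{n-1}/(n-1)<e_n/n)$ inside $\asc_D$ and the affine term, which I again rewrite as $\chi(e_{n-1}/(n-1)<(2n-1-e_n)/n)$. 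Under $e_n\leftrightarrow 2n-1-e_n$ these two indicators simply swap, leaving their sum unchanged, and (\ref{eq:widetilde-T-dual}) follows.

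The whole argument is combinatorial bookkeeping; the one place where I foresee needing care is the ceiling identification in the second paragraph, which reduces to noting that one subtracts a quantity strictly between $0$ and $1$ from an integer (with the boundary cases $i=0$ and $i=n-1$ checked directly). It is worth remarking that the same affine reflection $i\leftrightarrow 2n-1-i$ that makes the involution in (\ref{eq:widetilde-T-dual}) transparent is exactly what appears when one rewrites the affine adjustment as the second $\chi$-indicator above, so the two parts of the lemma share a common source.
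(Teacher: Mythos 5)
Your proof is correct. For \eqref{eq:widetilde-T-recurrence} you follow essentially the same route as the paper: strip off $e_n=i$, write $\widetilde{\asc}_D\e$ as $\asc_D(e_1,\dots,e_{n-1})$ plus two indicators depending only on $e_{n-1}$, group by $e_{n-1}=j$, and locate the switching points of the indicators; your ceiling identities $\lceil (n-1)i/n\rceil=i$ (for $0\le i\le n-1$) and $\lceil (n-1)(2n-1-i)/n\rceil=2n-i-2$ are exactly the paper's observation that $\lceil \tfrac{n-1}{n}t\rceil=t-\chi(t\ge n)$ for $0\le t\le 2n-1$, so this part is the same argument in slightly different clothing. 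Where you genuinely diverge is \eqref{eq:widetilde-T-dual}: the paper redoes the exponent computation for $e_n\ge n$, obtaining $\chi(e_{n-1}<e_n-1)+\chi(e_{n-1}<2n-e_n-1)$, and then substitutes $e_n=2n-i-1$ to match the $e_n=i$ case, whereas you define the involution $\iota(e_1,\dots,e_n)=(e_1,\dots,e_{n-1},2n-1-e_n)$ on $\I_n^{(2,4,\dots)}$ and note that, after rewriting the affine term as $\chi\bigl(\tfrac{e_{n-1}}{n-1}<\tfrac{2n-1-e_n}{n}\bigr)$, the two $e_n$-dependent indicators simply swap, so $\widetilde{\asc}_D$ is preserved. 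Your involution makes the symmetry conceptually transparent, needs no second ceiling analysis, and is independent of part (1); the paper's computation instead produces the explicit exponent formula in the range $e_n\ge n$ as a by-product (not needed elsewhere). Both arguments are sound; just keep the implicit use of $n\ge 3$ (so that the prefix $(e_1,\dots,e_{n-1})$ still carries the $0$-ascent condition and $\asc_D$ of the prefix is the statistic appearing in $T_{n-1,j}$) in mind, as the paper does.
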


\begin{proof}
By \eqref{def:asc-D} and \eqref{def:affine-asc-D}, for $\e=(e_{1},\ldots,e_{n})\in \I_{n}^{(2,4,\dots)}$ we have

\begin{align*}
\widetilde{\asc}_{D}(\e)
= \  & {\asc}_{D}\,(e_{1},\ldots,e_{n-1} )
+ \chi \big(\frac{e_{n-1}}{n-1} < \frac{e_n}{n} \big)
+ \chi \big(\frac{e_{n-1}}{n-1}+\frac{e_n}{n}<\frac{2n-1}{n} \big)\,.
\end{align*}
Note that, for any integer $0\leq t\leq 2n-1$, we have
$\left\lceil \frac{n-1}{n}t\right\rceil =
 t-\chi(t\ge n)$. In view of that $0\leq e_n\leq 2n-1$, it is easy to observe that
\begin{itemize}
  \item[(1)] $\frac{e_{n-1}}{n-1} < \frac{e_n}{n}$ if and only if $e_{n-1}< e_{n}-\chi \left(e_n\ge n\right)$;
  \item[(2)] $\frac{e_{n-1}}{n-1}+\frac{e_n}{n}<\frac{2n-1}{n}$  if and only if $e_{n-1} + e_{n} < 2n-2+\chi \left(e_n\ge n \right)$.
\end{itemize}
Therefore, for $n\ge 3$ and $\e=(e_{1},\ldots,e_{n})\in\I_{n}^{(2,4,\dots)}$ with $e_n< n$,
 we have
\begin{align*}
\widetilde{\asc}_{D}\,\e
= \  & {\asc}_{D}\,(e_{1},\ldots,e_{n-1})
+ \chi \left(e_{n-1} < e_{n} \right)
+ \chi \left(e_{n-1} < 2n-e_{n}-2 \right).
\end{align*}
Hence, for $n\geq3$ and $0\le i \le n-1$, we get
\begin{align*}
\widetilde{T}_{n,i}(x)=\sum_{j=0}^{2n-3} x^{\chi\left(j<i\right)+\chi\left(j<2n-i-2\right)}T_{n-1,j}(x),
\end{align*}
which is just the first equality.

We proceed to prove the seconde equality. Note that for $n\ge 3$ and $\e=(e_{1},\ldots,e_{n})\in\I_{n}^{(2,4,\dots)}$ with $e_n \ge n$,
 we have
\begin{align*}
\widetilde{\asc}_{D}\,\e
= \  & {\asc}_{D}\,(e_{1},\ldots,e_{n-1})
+ \chi(e_{n-1} < e_{n} -1)
+ \chi(e_{n-1} < 2n-e_{n}-1).
\end{align*}
Thus, for any $0\leq i\leq n-1$, we have
\begin{align*}
\widetilde{T}_{n,2n-i-1}(x) & = \sum_{j=0}^{2n-3} x^{\chi\left(j<(2n-i-1)-1\right)+\chi\left(j<2n-1-(2n-i-1)\right)}T_{n-1,j}(x)\\[5pt]
& = \sum_{j=0}^{2n-3} x^{\chi\left(j<2n-i-2\right)+\chi\left(j<i\right)}T_{n-1,j}(x)\\[5pt]
&= \widetilde{T}_{n,i}(x),
\end{align*}
as desired. This completes the proof.
\end{proof}

By the above lemma, we obtain an expression of $\widetilde{D}_{n}(x)$ in terms of the polynomials $T_{n-1,i}$, which is essential for our proof of the real-rootedness of $\widetilde{D}_{n}(x)$.

\begin{prop}
For $n\ge4$,
\begin{align}\label{recurrence relation Weyl D}
\widetilde{D}_{n}(x)= \sum_{i=0}^{n-2} \big( (n-i-1)x+i+1 \big) \big( xT_{n-1,i}(x)+T_{n-1,n+i-1}(x) \big).
\end{align}
\end{prop}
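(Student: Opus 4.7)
The plan is to work directly from the two identities established in Lemma~\ref{lem:S-recurrence} and reorganize the resulting double sum so that each $T_{n-1,j}(x)$ appears explicitly with the correct polynomial coefficient. First, I would start with $\widetilde{T}_n(x) = 2\widetilde{D}_n(x) = \sum_{i=0}^{2n-1}\widetilde{T}_{n,i}(x)$ and apply the symmetry $\widetilde{T}_{n,2n-i-1}(x) = \widetilde{T}_{n,i}(x)$ from \eqref{eq:widetilde-T-dual} via the substitution $i \mapsto 2n-1-i$ on the upper half of the range. This collapses the sum to
\[
\widetilde{D}_{n}(x) \;=\; \sum_{i=0}^{n-1}\widetilde{T}_{n,i}(x),
\]
so that only the regime $0 \le i \le n-1$, where the explicit formula \eqref{eq:widetilde-T-recurrence} applies, is needed.

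Next, I would substitute \eqref{eq:widetilde-T-recurrence} and interchange the order of summation, computing for each $j$ with $0 \le j \le 2n-3$ the total contribution of $T_{n-1,j}(x)$. The contribution of the $x^2$ term comes from $0 \le j \le i-1$ with $i \le n-1$, giving coefficient $(n-1-j)x^2$ for $0\le j\le n-2$ and zero otherwise; the contribution of the $x$ term comes from $i \le j \le 2n-i-3$, and splitting at $j=n-1$ yields coefficient $(j+1)x$ for $0\le j\le n-2$ and $(2n-2-j)x$ for $n-1 \le j \le 2n-3$; finally, the constant term comes from $2n-2-j \le i \le n-1$, contributing $j-n+2$ for $n-1\le j\le 2n-3$ and nothing for smaller $j$.

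Collecting these, the sum splits naturally into the ranges $0\le j\le n-2$ and $n-1\le j\le 2n-3$. Relabeling $j = n-1+i$ in the second range (so $0\le i\le n-2$) and keeping $j=i$ in the first, the coefficient of $T_{n-1,i}(x)$ becomes $(n-1-i)x^2 + (i+1)x$ and the coefficient of $T_{n-1,n+i-1}(x)$ becomes $(n-1-i)x + (i+1)$. Both share the common linear factor $(n-i-1)x + (i+1)$, yielding
\[
\widetilde{D}_{n}(x) \;=\; \sum_{i=0}^{n-2}\bigl((n-i-1)x + i+1\bigr)\bigl(xT_{n-1,i}(x) + T_{n-1,n+i-1}(x)\bigr),
\]
as claimed.

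The proof is essentially a bookkeeping exercise, so the only real obstacle is matching up the two ranges $\{0,\ldots,n-2\}$ and $\{n-1,\ldots,2n-3\}$ via a reindexing that reveals the common factor $(n-i-1)x + i+1$; recognizing that the coefficient patterns $(n-1-j, j+1)$ on the low range and $(2n-2-j, j-n+2)$ on the high range become identical after the shift $j = n-1+i$ is what makes the factorization possible. No further input beyond Lemma~\ref{lem:S-recurrence} is required.
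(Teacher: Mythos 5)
Your proposal is correct and follows essentially the same route as the paper: both use the symmetry \eqref{eq:widetilde-T-dual} to reduce to $\sum_{i=0}^{n-1}\widetilde{T}_{n,i}(x)$, substitute \eqref{eq:widetilde-T-recurrence}, interchange the order of summation, and collect the coefficients $(n-1-j)x^2+(j+1)x$ on the low range and $(n-1-i)x+(i+1)$ on the shifted range to reveal the common factor. The only difference is cosmetic bookkeeping (you compute per-$j$ coefficients directly, while the paper splits the middle sum at $j=n-1$ and re-indexes before swapping the summation order).
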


\begin{proof}
By \eqref{eq:widetilde-T-dual}, we have
\begin{align*}
\widetilde{T}_n(x) \ = & \ \sum_{i=0}^{2n-1}T_{n,i}(x;q) = \ 2 \sum_{i=0}^{n-1}T_{n,i}(x;q).
\end{align*}
Noting that $\widetilde{T}_n(x) = 2 \widetilde{D}_{n}(x)$, it follows that
\begin{align*}
\widetilde{D}_{n}(x) = \sum_{i=0}^{n-1}\widetilde{T}_{n,i}(x).
\end{align*}
\allowdisplaybreaks
Furthermore, by \eqref{eq:widetilde-T-recurrence}, we get
\begin{align*}
\widetilde{D}_{n}(x) = & \ \sum_{i=0}^{n-1} \Big(x^2 \sum_{j=0}^{i-1} T_{n-1,j}(x) + x \sum_{j=i}^{2n-i-3} T_{n-1,j}(x) + \sum_{j=2n-i-2}^{2n-3}T_{n-1,j}(x) \Big) \\[5pt]
= & \ x^2 \sum_{i=0}^{n-1} \sum_{j=0}^{i-1} T_{n-1,j}(x) + x \sum_{i=0}^{n-1} \sum_{j=i}^{2n-i-3} T_{n-1,j}(x) + \sum_{i=0}^{n-1} \sum_{j=2n-i-2}^{2n-3}T_{n-1,j}(x)  \\[5pt]
= & \ x^2 \sum_{i=0}^{n-1} \sum_{j=0}^{i-1} T_{n-1,j}(x) + x \sum_{i=0}^{n-1} \big(\sum_{j=i}^{n-2} + \sum_{j=n-1}^{2n-i-3}\big) T_{n-1,j}(x) + \sum_{i=0}^{n-1} \sum_{j=2n-i-2}^{2n-3}T_{n-1,j}(x)  \\[5pt]
= & \ x^2 \sum_{i=0}^{n-1} \sum_{j=0}^{i-1} T_{n-1,j}(x) + x \sum_{i=0}^{n-1} \sum_{j=i}^{n-2}T_{n-1,j}(x)  \\[5pt]
&\, + x \sum_{i=0}^{n-1}\sum_{j=n-1}^{2n-i-3} T_{n-1,j}(x) + \sum_{i=0}^{n-1} \sum_{j=2n-i-2}^{2n-3}T_{n-1,j}(x)  \\[5pt]
= & \ x^2 \sum_{i=0}^{n-1} \sum_{j=0}^{i-1} T_{n-1,j}(x) + x \sum_{i=0}^{n-1} \sum_{j=i}^{n-2}T_{n-1,j}(x)  \\[5pt]
&\, + x \sum_{i=0}^{n-1}\sum_{j=0}^{n-i-2} T_{n-1,n+j-1}(x) + \sum_{i=0}^{n-1} \sum_{j=n-i-1}^{n-2}T_{n-1,n+j-1}(x).
\end{align*}
Then, for each double summation, we interchange the order of summation,
\begin{align*}
\widetilde{D}_{n}(x) = & \ x^2 \sum_{j=0}^{n-2} \sum_{i=j+1}^{n-1} T_{n-1,j}(x) + x
\sum_{j=0}^{n-2} \sum_{i=0}^{j}
T_{n-1,j}(x)  \\[5pt]
&\, + x \sum_{j=0}^{n-2}\sum_{i=0}^{n-j-2}
 T_{n-1,n+j-1}(x) + \sum_{j=0}^{n-2} \sum_{i=n-j-1}^{n-1}T_{n-1,n+j-1}(x)  \\[5pt]
 =&  \ x^2 \sum_{j=0}^{n-2}  (n-j-1) T_{n-1,j}(x) + x \sum_{j=0}^{n-2} (j+1) T_{n-1,j}(x) \\
    &  +x  \sum_{j=0}^{n-2} (n-j-1) T_{n-1,n+j-1}(x) + \sum_{j=0}^{n-2} (j+1) T_{n-1,n+j-1}(x),
\end{align*}
as desired. This completes the proof.
\end{proof}

In the following we shall prove that the sequence $(xT_{n-1,i}(x)+T_{n-1,n+i-1}(x))_{i=0}^{n-2}$ is mutually interlacing.
To this end, define a sequence of polynomials $(K_{n,i}(x))_{i=0}^{2n-1}$
in the following way:
\begin{align}\label{eq:f}
K_{n,i}(x) & \ = \left\{
\begin{array}{ll}
T_{n,i}(x)+T_{n,n+i}(x), & \mbox{ if }0\leq i\leq n-1,\\[5pt]
xT_{n,i-n}(x)+T_{n,i}(x), & \mbox{ if }n\leq i\leq 2n-1.
\end{array}
\right.
\end{align}
Note that $(K_{n,i}(x))_{i=n}^{2n-1}$ is just the sequence $(xT_{n,i}(x)+T_{n,n+i}(x))_{i=0}^{n-1}$. Instead of proving the mutual interlacing of $(K_{n,i}(x))_{i=n}^{2n-1}$, we directly prove the mutual interlacing of the entire sequence $(K_{n,i}(x))_{i=0}^{2n-1}$.
A remarkable property of this sequence is that it satisfies the same recurrence relation as  $(T_{n,i}(x))_{i=0}^{2n-1}$.

\begin{prop}
For $n\geq3$ and $0\le i\le 2n-1$, we have
\begin{align}\label{f-recurrence}
K_{n,i}(x) \ = \ x\sum_{j=0}^{\left\lceil \frac{n-1}{n}i\right\rceil -1}K_{n-1,j}(x)+\sum_{j=\left\lceil \frac{n-1}{n}i\right\rceil }^{2n-3}K_{n-1,j}(x)\,.
\end{align}
\end{prop}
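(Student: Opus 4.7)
The plan is to prove the recurrence \eqref{f-recurrence} by direct expansion. We substitute the defining formula \eqref{eq:f} into both sides: on the left, $K_{n,i}(x)$ is either $T_{n,i}(x)+T_{n,n+i}(x)$ or $xT_{n,i-n}(x)+T_{n,i}(x)$, and each $T_{n,\cdot}(x)$ is then unfolded using the recurrence \eqref{eq:T_n,i}; on the right, each $K_{n-1,j}(x)$ is expanded according to whether $j\le n-2$ or $j\ge n-1$. Collecting coefficients of $1$, $x$, and $x^2$ as $\mathbb{Z}[x]$-linear combinations of the polynomials $T_{n-1,j}(x)$ should produce identical expressions on both sides.

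The bookkeeping is driven by two ceiling identities,
\[
\Big\lceil\tfrac{n-1}{n}(n+i)\Big\rceil=(n-1)+\Big\lceil\tfrac{n-1}{n}i\Big\rceil,\qquad
\Big\lceil\tfrac{n-1}{n}(i-n)\Big\rceil=\Big\lceil\tfrac{n-1}{n}i\Big\rceil-(n-1),
\]
which govern how the upper limit in \eqref{eq:T_n,i} shifts when $i$ is replaced by $n+i$ or $i-n$. Setting $k:=\lceil\tfrac{n-1}{n}i\rceil$, one finds $0\le k\le n-1$ in the first branch of \eqref{eq:f} and $n-1\le k\le 2n-2$ in the second, so that the partition point $n-1$ appearing in the definition of $K_{n-1,j}$ lies in a predictable position relative to $k$ inside each case. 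This is what lets the two layers of case splits align cleanly.

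With this setup, the proof reduces to verifying, separately in each of the two branches of \eqref{eq:f}, a finite identity among sums of the form $\sum_{j=a}^{b}T_{n-1,j}(x)$ after re-indexing the shifted terms $T_{n-1,n-1+j}$ and $T_{n-1,j-(n-1)}$ that come out of the $K_{n-1,j}$-expansion. Each such identity is then immediate by inclusion–exclusion on intervals of indices. The main obstacle is entirely clerical: one must keep three overlapping partitions (the two branches of \eqref{eq:f}, the branch of $K_{n-1,j}$, and the threshold $k$ from the $T$-recurrence) straight and check a few degenerate endpoints such as $k=2n-2$, where one summation range becomes empty. No new analytic tool is required, and the whole argument is purely algebraic manipulation.
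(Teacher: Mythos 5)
Your plan is correct: expanding $K_{n,i}$ via \eqref{eq:f}, unfolding each $T_{n,\cdot}$ on the left by \eqref{eq:T_n,i}, expanding each $K_{n-1,j}$ on the right, and matching the coefficient of each $T_{n-1,j}(x)$ does verify \eqref{f-recurrence} in both branches (the coefficients come out as $2x$, $1+x$, $2$ in the first branch and $x^2+x$, $2x$, $x+1$ in the second, on the three index intervals cut at $\lceil\frac{n-1}{n}i\rceil$ and at $n-1$), and your ceiling identities $\lceil\frac{n-1}{n}(n+i)\rceil=(n-1)+\lceil\frac{n-1}{n}i\rceil$ etc.\ are exactly what keeps the thresholds aligned. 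The paper proves the same identity by a different organizational device: it encodes \eqref{eq:T_n,i} as a single block matrix identity
\begin{align*}
\big(T_{n,i}(x)\big)_{i=0}^{2n-1}=\begin{pmatrix} A & B \\ xB & A \end{pmatrix}\big(T_{n-1,i}(x)\big)_{i=0}^{2n-3},
\end{align*}
with $A$ a staircase $\{1,x\}$-matrix and $B$ the all-ones matrix, encodes \eqref{eq:f} as left multiplication by $\begin{pmatrix} I & I \\ xI & I\end{pmatrix}$, and then observes the one-line commutation
$\begin{pmatrix} I & I \\ xI & I\end{pmatrix}\begin{pmatrix} A & B \\ xB & A\end{pmatrix}=\begin{pmatrix} A & B \\ xB & A\end{pmatrix}\begin{pmatrix} I & I \\ xI & I\end{pmatrix}$,
which yields \eqref{f-recurrence} for all $i$ at once. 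The matrix formulation buys you freedom from the triple case split and the degenerate-endpoint checks (empty ranges at $i'=0$ or $k=2n-2$) that your version must track by hand; your version is more elementary and makes the coefficient bookkeeping explicit, at the cost of longer, error-prone clerical work. Both are complete and purely algebraic, so your route is acceptable, but do carry out the coefficient comparison in full rather than asserting it, since the entire content of the proposition lives in that bookkeeping.
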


\begin{proof}
We use some matrix techniques to give a proof. For notational convenience, we shall consider $(K_{n,i}(x))_{i=0}^{2n-1}$ and $(T_{n,i}(x))_{i=0}^{2n-1}$ as column vectors.
From  \eqref{eq:f} it follows that
\begin{align}\label{f-recurrence-1}
 \big(K_{n,i}(x)\big)_{i=0}^{2n-1} & = \left(\begin{array}{cc}
  I_n & I_n \\
 xI_n & I_n
\end{array}\right)
\big(T_{n,i}(x)\big)_{i=0}^{2n-1},
\end{align}
where $I_n$ is the identity matrix of order $n$.
Note that the recurrence relation \eqref{eq:T_n,i} can be rewritten as
\begin{align*}
\begin{split}
T_{n,i}(x)    &  = x\ \sum_{j=0}^{i-1}T_{n-1,j}(x)+ \ \sum_{j=i}^{2n-3}T_{n-1,j}(x),\\
T_{n,n+i}(x)  &  = x \sum_{j=0}^{n+i-2}T_{n-1,j}(x)+ \sum_{j=n+i-1}^{2n-3}T_{n-1,j}(x),
\end{split}
\end{align*}
where $0 \le i \le n-1$. Therefore, we get
\begin{align}\label{T-recurrence-matrix}
\big(T_{n,i}(x)\big)_{i=0}^{2n-1} = \left(\begin{array}{cc}
  A & B \\
 xB & A
\end{array}\right)
\big(T_{n-1,i}(x)\big)_{i=0}^{2n-3},
\end{align}
where \begin{align*}
A = \left(\begin{array}{cccc}
  1 & 1 & \cdots & 1 \\
  x & 1 & \cdots & 1 \\
  x & x & \cdots & 1 \\
  \cdots & \cdots & \cdots & \cdots \\
  x & x & \cdots & x \\
\end{array}\right)_{(n-1)\times(n-2)}
\end{align*}
and
\begin{align*}
B = \left(\begin{array}{cccc}
  1 & 1 & \cdots & 1 \\
  1 & 1 & \cdots & 1 \\
  1 & 1 & \cdots & 1 \\
  \cdots & \cdots & \cdots & \cdots \\
  1 & 1 & \cdots & 1 \\
\end{array}\right)_{(n-1)\times(n-2)}.
\end{align*}
One can compute that
\begin{align}\label{inter-eq}
\left(\begin{array}{cc}
  I_n & I_n \\
 xI_n & I_n
\end{array}\right)
\left(\begin{array}{cc}
  A & B \\
 xB & A
\end{array}\right)& =
\left(\begin{array}{cc}
  A+xB & A+B \\
 xA+xB & A+xB
\end{array}\right)\nonumber\\[8pt]
& =
\left(\begin{array}{cc}
  A & B \\
 xB & A
\end{array}\right)
\left(\begin{array}{cc}
  I_{n-1} & I_{n-1} \\
 xI_{n-1} & I_{n-1}
\end{array}\right).
\end{align}

Combing \eqref{f-recurrence-1}, \eqref{T-recurrence-matrix} and \eqref{inter-eq}, we obtain
\begin{align*}
\big(K_{n,i}(x)\big)_{i=0}^{2n-1} & = \left(\begin{array}{cc}
  I_n & I_n \\
 xI_n & I_n
\end{array}\right)
\big(T_{n,i}(x)\big)_{i=0}^{2n-1}\\
 & =  \left(\begin{array}{cc}
  I_n & I_n \\
 xI_n & I_n
\end{array}\right)
\left(\begin{array}{cc}
  A & B \\
 xB & A
\end{array}\right)
\big(T_{n-1,i}(x)\big)_{i=0}^{2n-3},\\
 & =  \left(\begin{array}{cc}
  A & B \\
 xB & A
\end{array}\right)
\left(\begin{array}{cc}
  I_{n-1} & I_{n-1} \\
 xI_{n-1} & I_{n-1}
\end{array}\right)
\big(T_{n-1,i}(x)\big)_{i=0}^{2n-3}\\
& = \left(\begin{array}{cc}
  A & B \\
 xB & A
\end{array}\right)
\big(K_{n-1,i}(x)\big)_{i=0}^{2n-3},
\end{align*}
which is equivalent to \eqref{f-recurrence}. This completes the proof.
\end{proof}

Now we can prove the mutually interlacing properpty of $(K_{n,i}(x))_{i=0}^{2n-1}$.

\begin{prop}
\label{f mutually interlacing} For $n\geq4$, the sequence
of polynomials $(K_{n,i}(x))_{i=0}^{2n-1}$ is mutually interlacing.
\end{prop}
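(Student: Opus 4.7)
The natural plan is to argue by induction on $n$, in complete analogy with the proof of Proposition \ref{q-T-interlacing}. The crucial observation is that the recurrence \eqref{f-recurrence} for $(K_{n,i}(x))_{i=0}^{2n-1}$ is formally identical to \eqref{eq:T_n,i} and has exactly the shape of the transformation \eqref{comp-pre-tran}: taking $m=2n-2$, $m'=2n$, and $t_i=\lceil\tfrac{n-1}{n}i\rceil+1$, the sequence $(t_i)_{i=0}^{2n-1}$ is nondecreasing with $1\le t_0\le\cdots\le t_{2n-1}\le 2n-1$. Moreover, every $K_{n-1,j}(x)$ has nonnegative coefficients, since by \eqref{eq:f} it is either of the form $T_{n-1,j}(x)+T_{n-1,n-1+j}(x)$ or $xT_{n-1,j-n+1}(x)+T_{n-1,j}(x)$, and each $T_{n-1,k}(x)$ is itself a generating polynomial of the nonnegative statistic $\asc_D$.

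With these two ingredients in hand, the inductive step is immediate: assuming $(K_{n-1,j}(x))_{j=0}^{2n-3}$ is mutually interlacing, Theorem \ref{thm:interlacing} applied to \eqref{f-recurrence} yields that $(K_{n,i}(x))_{i=0}^{2n-1}$ is mutually interlacing. This handles every $n\ge 5$ from the preceding case.

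For the base case $n=4$, one verifies directly that $(K_{4,i}(x))_{i=0}^{7}$ is mutually interlacing. Using the explicit polynomials $T_{4,i}(x)=T_{4,i}(x;1)$ obtained by specializing the formulas of Lemma \ref{lem:mutual} to $q=1$, the eight $K_{4,i}(x)$ are completely explicit integer-coefficient polynomials, and each of the $\binom{8}{2}=28$ relations $K_{4,i}(x)\sep K_{4,j}(x)$ can be certified either by computing roots numerically or, following the approach of Lemma \ref{lem:mutual}, by testing the weak Hurwitz stability of $K_{4,j}(z^2)+zK_{4,i}(z^2)$ via Theorem \ref{Hermite--Biehler} and the Routh--Hurwitz criterion (Theorem \ref{Hurwitz criterion}).

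The main obstacle is this base case. One might hope to avoid it by deducing the mutual interlacing of $(K_{4,i}(x))$ abstractly from that of $(T_{4,i}(x))$ using Fisk's criterion (Theorem \ref{thm:fisk}) applied to the change-of-basis matrix in \eqref{f-recurrence-1}; however, that $\{0,1,x\}$-matrix mixes constant and $x$-entries in a pattern whose $2\times 2$ minors do not all carry the signs demanded by Fisk's hypotheses, so this shortcut does not go through. Fortunately, because no continuous parameter is present at $n=4$, the verification reduces to a finite numerical check that is straightforward with a computer algebra system and strictly simpler than the corresponding argument in Lemma \ref{lem:mutual}.
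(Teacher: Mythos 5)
Your proposal is correct and follows essentially the same route as the paper: induction via Theorem \ref{thm:interlacing} applied to the recurrence \eqref{f-recurrence}, with the base case $n=4$ settled by an explicit finite check of the eight polynomials $K_{4,i}(x)$ (the paper does this by listing their real zeros numerically, which matches your first suggested certification).
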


\begin{proof} We use induction on $n$. For $n=4$,  by using \eqref{eq:f}, we can directly compute the polynomials $K_{4,i}(x)$ for $0\leq i\leq 7$. The eight polynomials are listed below together with the values of their zeros rounded to 4 significant figures :
$$
\begin{array}{ll}
K_{4,0}(x)=12x^3+50x^2+32x+2,&\{-3.396,-0.7008,-0.07004\},\\[5pt]
K_{4,1}(x)=18x^3+52x^2+26x,&\{-2.246,-0.6432,0\},\\[5pt]
K_{4,2}(x)=26x^3+52x^2+18x,&\{-1.555,-0.4453,0\},\\[5pt]
K_{4,3}(x)=2x^4+32x^3+50x^2+12x,&\{-14.28,-1.427,-0.2945,0\},\\[5pt]
K_{4,4}(x)=2x^4+32x^3+50x^2+12x,&\{-14.28,-1.427,-0.2945,0\},\\[5pt]
K_{4,5}(x)=4x^4+38x^3+48x^2+6x,&\{-8.029,-1.331,-0.1404,0\},\\[5pt]
K_{4,6}(x)=6x^4+48x^3+38x^2+4x,&\{-7.124,-0.7513,-0.1246,0\},\\[5pt]
K_{4,7}(x)=12x^4+50x^3+32x^2+2x,&\{-3.396,-0.7008,-0.07004,0\}.
\end{array}
$$
One can easily verify that the polynomial $K_{4,i}$ interlaces $K_{4,j}$ for any $0\le i<j \le 7$. Then, by Theorem \ref{thm:interlacing}, we obtain the mutually interlacing property of $(K_{n,i}(x))_{i=0}^{2n-1}$ for any $n\geq 4$. This completes the proof.
\end{proof}

Now we are in position to prove the real-rootedness of $\widetilde{D}_{n}(x)$.

\begin{thm}
\label{Weyl D_n real}
For $n\ge3$, the polynomial $\widetilde{D}_{n}(x)$
has only real zeros. \end{thm}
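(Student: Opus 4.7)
My plan is to combine the expression \eqref{recurrence relation Weyl D} with the definition \eqref{eq:f} of $K_{n,i}(x)$ and then apply Theorem \ref{Haglund1999} followed by Theorem \ref{thm-wagner}. First I observe that, for $0\le i\le n-2$, the index $n-1+i$ lies in the range $[n-1,2n-3]$, so by definition
\begin{align*}
K_{n-1,n-1+i}(x)\;=\;xT_{n-1,i}(x)+T_{n-1,n+i-1}(x).
\end{align*}
Substituting this into \eqref{recurrence relation Weyl D} yields the compact form
\begin{align*}
\widetilde{D}_n(x)\;=\;\sum_{i=0}^{n-2}\bigl((n-i-1)x+(i+1)\bigr)\,K_{n-1,n-1+i}(x)\;=\;x\,A(x)+B(x),
\end{align*}
where $A(x)=\sum_{i=0}^{n-2}a_i f_i(x)$, $B(x)=\sum_{i=0}^{n-2}b_i f_i(x)$, with $a_i=n-i-1$, $b_i=i+1$, and $f_i(x)=K_{n-1,n-1+i}(x)$.

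Next I verify the hypotheses of Theorem \ref{Haglund1999}. By Proposition \ref{f mutually interlacing}, the full sequence $(K_{n-1,j}(x))_{j=0}^{2n-3}$ is mutually interlacing for $n-1\ge 4$, so its subsequence $(f_i(x))_{i=0}^{n-2}$ is mutually interlacing as well. For the coefficients,
\begin{align*}
a_i b_{i+1}-b_i a_{i+1}\;=\;(n-i-1)(i+2)-(i+1)(n-i-2)\;=\;n\;>\;0,
\end{align*}
so the chain condition $a_i b_{i+1}\ge b_i a_{i+1}$ holds. Theorem \ref{Haglund1999} then gives $A(x)\preceq B(x)$, with both $A(x)$ and $B(x)$ real-rooted and having nonnegative coefficients. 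By Theorem \ref{thm-wagner}, the interlacing $A(x)\preceq B(x)$ is equivalent to $xA(x)$ and $B(x)$ being compatible, so in particular $xA(x)+B(x)=\widetilde{D}_n(x)$ has only real zeros.

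This argument handles $n\ge 5$. The remaining cases $n=3$ and $n=4$ I expect to dispatch by direct computation: compute $\widetilde{D}_3(x)$ and $\widetilde{D}_4(x)$ explicitly from the inversion-sequence interpretation (or from \eqref{recurrence relation Weyl D}) and check real-rootedness numerically. The main potential obstacle is the coefficient inequality $a_ib_{i+1}\ge b_i a_{i+1}$; had it gone the wrong way, one would instead get $B\preceq A$ and no conclusion about $xA+B$. Fortunately the symmetric pairing $(a_i,b_i)=(n-i-1,i+1)$ produces a constant positive gap, which is exactly what Theorem \ref{Haglund1999} requires, and the resulting interlacing $A\preceq B$ is precisely what Theorem \ref{thm-wagner} converts into real-rootedness of $xA+B$.
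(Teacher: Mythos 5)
Your proposal is correct and follows essentially the same route as the paper: the same rewriting of \eqref{recurrence relation Weyl D} as $\sum_{i=0}^{n-2}\big((n-i-1)x+(i+1)\big)K_{n-1,n-1+i}(x)$, the same appeal to Proposition \ref{f mutually interlacing} for the subsequence, and the same application of Theorem \ref{Haglund1999} with the constant gap $a_ib_{i+1}-b_ia_{i+1}=n$; the only difference is the final step, where the paper turns $A\preceq B$ into $B\preceq xA$ (using nonpositivity of the zeros) and applies Lemma \ref{lem:roots-interlacing}, while you invoke Theorem \ref{thm-wagner} to get compatibility of $xA$ and $B$ — an equivalent conclusion. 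Your explicit verification of the small cases $n=3,4$ is in fact slightly more careful than the paper, which quotes Proposition \ref{f mutually interlacing} for $(K_{3,i}(x))$ although that proposition is stated only for $n\ge 4$.
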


\begin{proof}
Clearly, $\widetilde{D}_{3}(x)=4 x + 16 x^2 + 4 x^3$ has only real zeros.

By \eqref{recurrence relation Weyl D}, it suffices to prove the real-rootedness of the following polynomial
\begin{align*}
\sum_{i=1}^{n-1} \big( (n-i)x+i \big) K_{n-1,n+i-2}(x)
\end{align*}
for any $n\ge4$.

By Proposition \ref{f mutually interlacing}, we know that, for any $n\ge4$,
the sequence $(K_{n-1,i}(x))_{i=n-1}^{2n-3}$,
as a subsequence of $(K_{n-1,i}(x))_{i=0}^{2n-3}$, is mutually interlacing.
Let $m=n-1$ and define $a_i=n-i$, $b_i=i$ and $f_{i}(x)=K_{n-1,n+i-2}(x)$ for $1\le i \le n-1$ in Theorem \ref{Haglund1999}. Since $a_{i}b_{i+1}- b_{i}a_{i+1}=n>0$, it is immediate that
$$\sum_{i=1}^{n-1} (n-i) K_{n-1,n+i-2}(x) \ \sep \ \sum_{i=1}^{n-1} i \, K_{n-1,n+i-2}(x).$$
Since all the zeros of these two polynomials are real and nonpositive, we get $$\sum_{i=1}^{n-1} i K_{n-1,n+i-2}(x) \ \sep \ x\sum_{i=1}^{n-1} (n-i) K_{n-1,n+i-2}(x).$$
Further, by Lemma \ref{lem:roots-interlacing}, we obtain the desired result. This completes the proof.
\end{proof}

Finally, we can prove Conjecture \ref{conj:affine-type-D}. Combining Theorem \ref{Weyl D_n real} and the known results
on the real-rootedness of affine Eulerian polynomials of other types,
we obtain the following result, which gives a complete answer to
Dilks, Petersen, and Stembridge's conjecture.

\begin{thm}
For any irreducible finite Weyl group $W$, the affine Eulerian polynomial $\widetilde{W}(x)$ has only real zeros.
\end{thm}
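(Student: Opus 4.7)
The plan is to combine case-by-case results according to the Cartan--Killing classification of irreducible finite Weyl groups, which partitions $W$ into the classical families $A_n$, $B_n$, $C_n$, $D_n$ and the exceptional families $E_6, E_7, E_8, F_4, G_2$. For each of these, the real-rootedness of $\widetilde{W}(x)$ is either already recorded in the literature or has just been established in this paper, so the proof will consist mainly of assembling references.

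First I would dispatch types $A$ and $C$ by invoking the remark of Dilks, Petersen, and Stembridge cited in the introduction: both $\widetilde{A}_n(x)$ and $\widetilde{C}_n(x)$ are scalar multiples of the classical Eulerian polynomial $S_n(x)$, which is well known to be real-rooted (see also \cite{Fulman2000Affine, Petersen2005Cyclic}). Next, for type $B$, I would cite Savage and Visontai's proof, which identifies $\widetilde{B}_n(x)$ with the refined polynomial $T_{n+1,n+1}(x)$ and derives real-rootedness from the mutual interlacing of $(T_{n+1,i}(x))_{i=0}^{2n+1}$. For type $D$, the real-rootedness is precisely the content of Theorem \ref{Weyl D_n real}, which we have just proved; the small cases $n\le 2$ are handled by direct inspection since $D_1$ and $D_2$ are reducible or trivial and the relevant polynomials can be computed explicitly.

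Finally, for the five exceptional types $E_6, E_7, E_8, F_4, G_2$, each group $W$ is a single finite object, so $\widetilde{W}(x)$ is one specific polynomial of bounded degree. I would verify the real-rootedness by direct computation, either by invoking the explicit factorizations already recorded in \cite{Dilks2009Affine} or by computing the zeros numerically with a computer algebra system. Since the roots of a single polynomial of fixed degree with explicit integer coefficients can be located to arbitrary precision, this step is routine.

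There is no conceptual obstacle in this final theorem beyond what has already been accomplished in the body of the paper; the only potentially delicate point is a clean statement that organizes the cases, and in particular making sure the type $D$ argument above covers the range $n\ge 3$ appearing in Theorem \ref{Weyl D_n real} while the low-rank cases reduce to types already handled (recall that $D_2\cong A_1\times A_1$ is reducible, and for type $D_3\cong A_3$ one may appeal to type $A$). Once these bookkeeping details are settled, the proof is a one-line assembly: every irreducible finite Weyl group falls into one of the classes above, and in each class $\widetilde{W}(x)$ has only real zeros, which completes the confirmation of Conjecture \ref{conj:affine-type-D}.
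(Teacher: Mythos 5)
Your proposal is correct and is essentially the paper's own argument: the paper proves the statement in one sentence by combining Theorem \ref{Weyl D_n real} (the new type $D$ result) with the previously known cases for types $A$, $C$ (Dilks--Petersen--Stembridge), type $B$ (Savage--Visontai), and the exceptional groups (direct verification). Your version merely spells out this case-by-case assembly in more detail, including the harmless low-rank bookkeeping for type $D$.
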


As we have seen that, the decomposition of {\textup{$\widetilde{D}_{n}(x)$}} given in \eqref{recurrence relation Weyl D} plays an important role in our proof of Dilks, Petersen, and Stembridge's conjecture. In the following, we shall use this decomposition to prove a relation concerning $\widetilde{B}_{n}(x),\,\widetilde{D}_{n}(x)$
and $D_{n-1}(x)$, which was established by Dilks, Petersen, and Stembridge.

\begin{prop}
[{\cite[Proposition 6.2]{Dilks2009Affine}}] For $n\geq3$, we have
\begin{align}\label{Dilks-6.2}
\widetilde{D}_{n}(x)=\widetilde{B}_{n}(x)-2nx\,D_{n-1}(x).
\end{align}
Moreover, $\widetilde{B}_{n}(x) \sep \widetilde{B}_{n+1}(x)$,
$D_{n}(x) \sep D_{n+1}(x)$
and $D_{n}(x) \sep \widetilde{B}_{n}(x)$.
\end{prop}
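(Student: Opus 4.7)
My plan is to reduce the identity to an expansion in the basis $\{T_{n-1,k}(x)\}_{k=0}^{2n-3}$, and to derive the three interlacings from the mutual interlacing of $(T_{n+1,j}(x))_{j=0}^{2n+1}$ (the $q=1$ case of Proposition~\ref{q-T-interlacing}), invoking Lemmas~\ref{lem:roots-interlacing} and~\ref{mutual-inter-cor} together with the standard fact used in the proof of Theorem~\ref{thm:interlacing} that, for real-rooted polynomials with nonnegative coefficients, $f\sep g$ implies $g\sep xf$.

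For the identity, I start from the Savage--Visontai equality $\widetilde{B}_{n}(x)=T_{n+1,n+1}(x)$. Applying the recurrence~\eqref{eq:T_n,i} once gives $T_{n+1,n+1}(x)=x\sum_{j=0}^{n-1}T_{n,j}(x)+\sum_{j=n}^{2n-1}T_{n,j}(x)$, and a second application of~\eqref{eq:T_n,i} to each $T_{n,j}(x)$ expands $\widetilde{B}_{n}(x)$ as a linear combination of the $T_{n-1,k}(x)$ with polynomial coefficients in $x$. On the other side, $\widetilde{D}_{n}(x)$ is already expanded in this basis by~\eqref{recurrence relation Weyl D}, while $2nxD_{n-1}(x)=nx\sum_{k=0}^{2n-3}T_{n-1,k}(x)$ because $2D_{n-1}(x)=T_{n-1}(x)$. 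Splitting the outer sum according to the piecewise behavior of $\lceil(n-1)j/n\rceil$ on the three ranges $0\le j\le n-1$, $j=n$, and $n+1\le j\le 2n-1$, and interchanging orders of summation, one finds that the coefficient of $T_{n-1,k}(x)$ in $\widetilde{B}_{n}(x)-2nxD_{n-1}(x)$ equals $(n-k-1)x^{2}+(k+1)x$ for $0\le k\le n-2$, equals $(n-1)x+1$ for $k=n-1$, and equals $(2n-k-2)x+(k-n+2)$ for $n\le k\le 2n-3$; these match exactly the coefficients of $T_{n-1,k}(x)$ obtained by expanding~\eqref{recurrence relation Weyl D}, yielding the identity.

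For the three interlacings, I exploit the relations $2D_{n}(x)=T_{n+1,0}(x)$ (from $T_{n,0}=T_{n-1}$ in Lemma~\ref{q-T-recurrence}), $2D_{n+1}(x)=\sum_{j=0}^{2n+1}T_{n+1,j}(x)$, and $\widetilde{B}_{n}(x)=T_{n+1,n+1}(x)$. Then $D_{n}(x)\sep D_{n+1}(x)$ is immediate from Lemma~\ref{mutual-inter-cor} applied to the mutually interlacing sequence $(T_{n+1,j}(x))_{j=0}^{2n+1}$, and $D_{n}(x)\sep\widetilde{B}_{n}(x)$ follows directly from the mutual interlacing relation $T_{n+1,0}(x)\sep T_{n+1,n+1}(x)$. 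For $\widetilde{B}_{n}(x)\sep\widetilde{B}_{n+1}(x)$, the recurrence yields $\widetilde{B}_{n+1}(x)=T_{n+2,n+2}(x)=x\sum_{j=0}^{n}T_{n+1,j}(x)+\sum_{j=n+1}^{2n+1}T_{n+1,j}(x)$; mutual interlacing together with Lemma~\ref{lem:roots-interlacing} gives both $\sum_{j=0}^{n}T_{n+1,j}(x)\sep T_{n+1,n+1}(x)$ and $T_{n+1,n+1}(x)\sep\sum_{j=n+1}^{2n+1}T_{n+1,j}(x)$; the nonnegative-coefficient trick upgrades the first to $T_{n+1,n+1}(x)\sep x\sum_{j=0}^{n}T_{n+1,j}(x)$, and Lemma~\ref{lem:roots-interlacing}(2) then combines these to $T_{n+1,n+1}(x)\sep T_{n+2,n+2}(x)$.

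The main obstacle is the bookkeeping in the identity: two iterations of~\eqref{eq:T_n,i} produce three boundary cases in $k$, and the $-2nxD_{n-1}(x)$ correction is precisely what cancels the uniform $nx$ contribution coming from the $j\ge n$ block in the outer expansion of $T_{n+1,n+1}(x)$. Once that cancellation is isolated, matching with~\eqref{recurrence relation Weyl D} becomes routine, and the three interlacing assertions then reduce to direct applications of the mutually interlacing framework already established in the paper.
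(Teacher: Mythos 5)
Your proposal is correct and follows essentially the same route as the paper: expand $\widetilde{B}_{n}(x)=T_{n+1,n+1}(x)$ by two applications of the recurrence \eqref{eq:T_n,i}, write $2nxD_{n-1}(x)$ in the basis $\{T_{n-1,k}(x)\}$, and match coefficients with the decomposition \eqref{recurrence relation Weyl D} (your claimed coefficients $(n-k-1)x^{2}+(k+1)x$ and $(2n-k-2)x+(k-n+2)$ are exactly what the paper's computation yields). The three interlacing statements are likewise derived in the paper precisely as you do, from the mutual interlacing of the $T$-sequence together with Lemmas \ref{lem:roots-interlacing} and \ref{mutual-inter-cor} and the nonnegative-coefficient fact that $f\sep g$ implies $g\sep xf$.
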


\begin{proof}
Let us first prove \eqref{Dilks-6.2}.
By \eqref{recurrence relation Weyl D}, we know that $\widetilde{D}_{n}(x)$ can be expressed in terms of $T_{n-1,i}$. We attempt to expand the other two  polynomials $\widetilde{B}_{n}(x)$ and $D_{n-1}(x)$ as well and then prove these expansions satisfy the desired equality.

As noted by Savage and Visontai \cite{Savage$s$}, $\widetilde{B}_{n}(x)=T_{n+1,n+1}(x)$.  By the recurrence relation \eqref{eq:T_n,i}, we obtain that
$$
\widetilde{B}_{n}(x)  =
\sum_{i=0}^{n-1}\big(x\,T_{n,i}(x)+T_{n,n+i}(x)\big).
$$
\allowdisplaybreaks
Further, by \eqref{eq:T_n,i}, we have
\begin{align*}
\widetilde{B}_{n}(x) =
& \ \sum_{i=0}^{n-1}\Big(x\big( x\ \sum_{j=0}^{i-1}T_{n-1,j}(x)+ \sum_{j=i}^{2n-3}T_{n-1,j}(x)\big)\\[5pt]
& +\big(x \sum_{j=0}^{n+i-2}T_{n-1,j}(x)+ \sum_{j=n+i-1}^{2n-3}T_{n-1,j}(x)\big)\Big)\\[5pt]
=& \ \sum_{i=0}^{n-1}\Big(x\big( x\ \sum_{j=0}^{i-1}T_{n-1,j}(x)+ (\sum_{j=i}^{n-2}+\sum_{j=n-1}^{2n-3})T_{n-1,j}(x)\big)\\[5pt]
& +\big(x (\sum_{j=0}^{n-2}+\sum_{j=n-1}^{n+i-2}) T_{n-1,j}(x)+ \sum_{j=n+i-1}^{2n-3}T_{n-1,j}(x)\big)\Big)\\[5pt]
=& \ x^2\sum_{i=0}^{n-1}\sum_{j=0}^{i-1}T_{n-1,j}(x) + x\sum_{i=0}^{n-1}\sum_{j=i}^{n-2}T_{n-1,j}(x) +x\sum_{i=0}^{n-1}\sum_{j=n-1}^{2n-3}T_{n-1,j}(x)\\[5pt]
& +x \sum_{i=0}^{n-1}\sum_{j=0}^{n-2}T_{n-1,j}(x)+x \sum_{i=0}^{n-1} \sum_{j=n-1}^{n+i-2} T_{n-1,j}(x)+ \sum_{i=0}^{n-1}\sum_{j=n+i-1}^{2n-3}T_{n-1,j}(x)\\[5pt]
=& \ x^2\sum_{i=0}^{n-1}\sum_{j=0}^{i-1}T_{n-1,j}(x) + x\sum_{i=0}^{n-1}\sum_{j=i}^{n-2}T_{n-1,j}(x) +x\sum_{i=0}^{n-1}\sum_{j=0}^{n-2}T_{n-1,n+j-1}(x)\\[5pt]
& +x \sum_{i=0}^{n-1}\sum_{j=0}^{n-2}T_{n-1,j}(x)+x\sum_{i=0}^{n-1} \sum_{j=0}^{i-1} T_{n-1,n+j-1}(x)+ \sum_{i=0}^{n-1}\sum_{j=i}^{n-2}T_{n-1,n+j-1}(x).
\end{align*}
Then, for each double summation, we interchange the order of summation and get
\begin{align*}
\sum_{i=0}^{n-1}\sum_{j=0}^{i-1}T_{n-1,j}(x) &  =\ \sum_{j=0}^{n-2}\sum_{i=j+1}^{n-1}T_{n-1,j}(x) = \sum_{j=0}^{n-2}(n-j-1)T_{n-1,j}(x),\\[5pt]
\sum_{i=0}^{n-1}\sum_{j=i}^{n-2}T_{n-1,j}(x) & = \ \sum_{j=0}^{n-2}\sum_{i=0}^{j}T_{n-1,j}(x) = \sum_{j=0}^{n-2}(j+1)T_{n-1,j}(x),\\[5pt]
\sum_{i=0}^{n-1}\sum_{j=0}^{n-2}T_{n-1,n+j-1}(x) &  = \ \sum_{j=0}^{n-2}\sum_{i=0}^{n-1}T_{n-1,n+j-1}(x)  = \sum_{j=0}^{n-2}n\,T_{n-1,n+j-1}(x),\\[5pt]
\sum_{i=0}^{n-1}\sum_{j=0}^{n-2}T_{n-1,j}(x) &  = \  \sum_{j=0}^{n-2}\sum_{i=0}^{n-1}T_{n-1,j}(x)  =  \sum_{j=0}^{n-2}n\,T_{n-1,j}(x),\\[5pt]
\sum_{i=0}^{n-1} \sum_{j=0}^{i-1} T_{n-1,n+j-1}(x) &  = \ \sum_{j=0}^{n-2}
\sum_{i=j+1}^{n-1} T_{n-1,n+j-1}(x)  = \sum_{j=0}^{n-2}(n-j-1)T_{n-1,n+j-1}(x),\\[5pt]
\sum_{i=0}^{n-1}\sum_{j=i}^{n-2}T_{n-1,n+j-1}(x)  &  = \ \sum_{j=0}^{n-2}\sum_{i=0}^{j}T_{n-1,n+j-1}(x)  =  \sum_{j=0}^{n-2}(j+1)T_{n-1,n+j-1}(x).
\end{align*}
Therefore, it follows that
\begin{equation}\label{eq:1}
\begin{aligned}
\widetilde{B}_{n}(x) = &
   \sum_{j=0}^{n-2}\big((n+j-1)x+n+j+1\big)x\,T_{n-1,j}(x) \\
 & \ +\sum_{j=0}^{n-2}\big((2n-j-1)x+j+1\big)T_{n-1,n+j-1}.
\end{aligned}
\end{equation}
On the other hand, by \eqref{eq:T_n,i}, we have
\begin{align}\label{eq:2}
D_{n-1}(x) = \frac{1}{2}T_{n,0}(x)
= \frac{1}{2}\sum_{i=0}^{n-2}\big(T_{n-1,i}(x)+T_{n-1,n+i-1}(x)\big).
\end{align}
Combining \eqref{eq:1} and \eqref{eq:2}, we get
\begin{align*}
\widetilde{B}_{n}(x)-2nxD_{n-1}(x) =&\sum_{i=0}^{n-2}\big((n+i-1)x+n+i+1\big)xT_{n-1,i}(x)\nonumber\\
   & +\sum_{i=0}^{n-2}\big((2n-i-1)x+i+1\big)T_{n-1,n+i-1}\\
   & -\sum_{i=0}^{n-2}nx\big(T_{n-1,i}(x)+T_{n-1,n+i-1}(x)\big)\\
=& \sum_{i=0}^{n-2} \big( (n-i-1)x+i+1 \big) \big( xT_{n-1,i}(x)+T_{n-1,n+i-1}(x) \big),
\end{align*}
which is equal to $\widetilde{D}_{n}(x)$ by \eqref{recurrence relation Weyl D}.

 We proceed to prove the rest of the assertions. By the mutually interlacing property of $\big( T_{n,i}(x)\big)_{i=0}^{2n-1}$ and the fact that these polynomials have only nonnegative coefficients for $0 \le i \le n-1$, we get that
\begin{align*}
 T_{n,n}(x) \ \sep \ x \,T_{n,i}(x) \quad \mbox{and} \quad T_{n,n}(x) \ \sep \  T_{n,n+i}(x).
\end{align*}
Then, by Lemma \ref{lem:roots-interlacing},
we obtain that
\begin{align*}
\widetilde{B}_{n-1}(x) = T_{n,n}(x)  \ \sep \
 \sum_{i=0}^{n-1}\big(xT_{n,i}(x)+T_{n,n+i}(x)\big) = \widetilde{B}_{n}(x).
\end{align*}

Similarly, since for any $0\leq i\leq n-1$ it holds
\begin{align*}
T_{n,0}(x)  \ \sep \  T_{n,i}(x) \mbox{ and} \ T_{n,0}(x) \ \sep \  T_{n,n+i}(x),
\end{align*}
we obtain that
\begin{align*}
D_{n-1}(x) = \dfrac{1}{2}T_{n,0}(x)
 \ \sep \  \dfrac{1}{2}\sum_{i=0}^{n-1}\big(T_{n,i}(x)+T_{n,n-1+i}(x)\big) = D_{n}(x).
\end{align*}

Finally, for the interlacing relation between $D_{n}(x)$
and $\widetilde{B}_{n}(x)$, we have
\begin{align*}
D_{n}(x) = \dfrac{1}{2}T_{n+1,0}(x) \ \sep \  T_{n+1,n+1}(x)=\widetilde{B}_{n}(x).
\end{align*}
This completes the proof.
\end{proof}

\begin{rem}
The descent polynomials for Coxeter groups have a similar
property, which states that
\begin{align*}
D_{n}(x)=B_{n}(x)-n2^{n-1}x\,A_{n-2}(x).
\end{align*}
This was found by Stembridge \cite[Lemma 9.1]{Stembridge1994Some}.
For more information, see \cite[Theorem 4.7]{Brenti1994$q$} and \cite[Proposition 6.3]{Dilks2009Affine}.
\end{rem}

\vskip 3mm
\noindent {\bf Acknowledgments.} This work was supported by the 973 Project and the National Science Foundation of China.

\begin{thebibliography}{10}

\bibitem{Bjorner2005Combinatorics}
A.~Bj{\"o}rner and F.~Brenti.
\newblock {\em Combinatorics of {C}oxeter groups}, volume 231 of {\em Graduate
  Texts in Mathematics}.
\newblock Springer, New York, 2005.

\bibitem{Branden2006linear}
P.~Br{\"a}nd{\'e}n.
\newblock On linear transformations preserving the {P}\'olya frequency
  property.
\newblock {\em Trans. Amer. Math. Soc.}, 358(8):3697--3716 (electronic), 2006.

\bibitem{Branden2011Iterated}
P.~Br{\"a}nd{\'e}n.
\newblock Iterated sequences and the geometry of zeros.
\newblock {\em J. Reine Angew. Math.}, 658:115--131, 2011.

\bibitem{Brenti1989Unimodal}
F.~Brenti.
\newblock Unimodal, log-concave and {P}{\'o}lya frequency sequences in
  combinatorics.
\newblock {\em Mem. Amer. Math. Soc.}, 81(413):viii+106, 1989.

\bibitem{Brenti1994Log}
F.~Brenti.
\newblock Log-concave and unimodal sequences in algebra, combinatorics, and
  geometry: an update.
\newblock In {\em Jerusalem combinatorics '93}, volume 178 of {\em Contemp.
  Math.}, pages 71--89. Amer. Math. Soc., Providence, RI, 1994.

\bibitem{Brenti1994$q$}
F.~Brenti.
\newblock {$q$}-{E}ulerian polynomials arising from {C}oxeter groups.
\newblock {\em European J. Combin.}, 15(5):417--441, 1994.

\bibitem{Cellini1995general}
P.~Cellini.
\newblock A general commutative descent algebra.
\newblock {\em J. Algebra}, 175(3):990--1014, 1995.

\bibitem{Cellini1995generala}
P.~Cellini.
\newblock A general commutative descent algebra. {II}. {T}he case {$C_n$}.
\newblock {\em J. Algebra}, 175(3):1015--1026, 1995.

\bibitem{Cellini1998Cyclic}
P.~Cellini.
\newblock Cyclic {E}ulerian elements.
\newblock {\em European J. Combin.}, 19(5):545--552, 1998.

\bibitem{Chudnovsky2007roots}
M.~Chudnovsky and P.~Seymour.
\newblock The roots of the independence polynomial of a clawfree graph.
\newblock {\em J. Combin. Theory Ser. B}, 97(3):350--357, 2007.

\bibitem{Dilks2009Affine}
K.~Dilks, T.~K. Petersen, and J.~R. Stembridge.
\newblock Affine descents and the {S}teinberg torus.
\newblock {\em Adv. in Appl. Math.}, 42(4):423--444, 2009.

\bibitem{Dolgachev1994character}
I.~Dolgachev and V.~Lunts.
\newblock A character formula for the representation of a {W}eyl group in the
  cohomology of the associated toric variety.
\newblock {\em J. Algebra}, 168(3):741--772, 1994.

\bibitem{FiskPolynomials}
S.~Fisk.
\newblock Polynomials, roots, and interlacing.
\newblock {\em arXiv:0612833}.

\bibitem{Fulman2000Affine}
J.~Fulman.
\newblock Affine shuffles, shuffles with cuts, the {W}hitehouse module, and
  patience sorting.
\newblock {\em J. Algebra}, 231(2):614-- 639, 2000.

\bibitem{Haglund1999Theorems}
J.~Haglund, K.~Ono, and D.~G. Wagner.
\newblock Theorems and conjectures involving rook polynomials with only real
  zeros.
\newblock In {\em Topics in number theory ({U}niversity {P}ark, {PA}, 1997)},
  volume 467 of {\em Math. Appl.}, pages 207--221. Kluwer Acad. Publ.,
  Dordrecht, 1999.

\bibitem{Humphreys1990Reflection}
J.~E. Humphreys.
\newblock {\em Reflection {G}roups and {C}oxeter {G}roups}, volume~29 of {\em
  Cambridge Studies in Advanced Mathematics}.
\newblock Cambridge University Press, Cambridge, 1990.

\bibitem{Hurwitz1895Ueber}
A.~Hurwitz.
\newblock Ueber die {B}edingungen, unter welchen eine {G}leichung nur {W}urzeln
  mit negativen reellen {T}heilen besitzt.
\newblock {\em Math. Ann.}, 46(2):273--284, 1895.

\bibitem{LamAlcoved}
T.~Lam and A.~Postnikov.
\newblock Alcoved polytopes {II}.
\newblock {\em arXiv:1202.4015}.

\bibitem{Petersen2005Cyclic}
T.~K. Petersen.
\newblock Cyclic descents and {$P$}-partitions.
\newblock {\em J. Algebraic Combin.}, 22(3):343--375, 2005.

\bibitem{Rahman2002Analytic}
Q.~I. Rahman and G.~Schmeisser.
\newblock {\em Analytic {T}heory of {P}olynomials}, volume~26 of {\em London
  Mathematical Society Monographs. New Series}.
\newblock The Clarendon Press Oxford University Press, Oxford, 2002.

\bibitem{Savage$s$}
C.~D. Savage and M.~Visontai.
\newblock The {$s$}-{E}ulerian polynomials have only real roots.
\newblock {\em Trans. Amer. Math. Soc.}, to appear.

\bibitem{Stanley1980number}
R.~P. Stanley.
\newblock The number of faces of a simplicial convex polytope.
\newblock {\em Adv. in Math.}, 35(3):236--238, 1980.

\bibitem{Stanley1989Log}
R.~P. Stanley.
\newblock Log-concave and unimodal sequences in algebra, combinatorics, and
  geometry.
\newblock In {\em Graph theory and its applications: {E}ast and {W}est
  ({J}inan, 1986)}, volume 576 of {\em Ann. New York Acad. Sci.}, pages
  500--535. New York Acad. Sci., New York, 1989.

\bibitem{Stembridge1992Eulerian}
J.~R. Stembridge.
\newblock Eulerian numbers, tableaux, and the {B}etti numbers of a toric
  variety.
\newblock {\em Discrete Math.}, 99(1-3):307--320, 1992.

\bibitem{Stembridge1994Some}
J.~R. Stembridge.
\newblock Some permutation representations of {W}eyl groups associated with the
  cohomology of toric varieties.
\newblock {\em Adv. Math.}, 106(2):244--301, 1994.

\bibitem{Stembridge2008Coxeter}
J.~R. Stembridge.
\newblock Coxeter cones and their {$h$}-vectors.
\newblock {\em Adv. Math.}, 217(5):1935--1961, 2008.

\bibitem{Visontai2013Stable}
M.~Visontai and N.~Williams.
\newblock Stable multivariate {$W$}-{E}ulerian polynomials.
\newblock {\em J. Combin. Theory Ser. A}, 120(7):1929--1945, 2013.

\bibitem{Wagner1992Total}
D.~G. Wagner.
\newblock Total positivity of {H}adamard products.
\newblock {\em J. Math. Anal. Appl.}, 163(2):459--483, 1992.

\bibitem{Wagner2000Zeros}
D.~G. Wagner.
\newblock Zeros of reliability polynomials and $f$-vectors of matroids.
\newblock {\em Combin. Probab. Comput}, 9:167--190, 2000.

\end{thebibliography}

\end{document}